\documentclass[11pt,reqno]{amsart} 
\usepackage[utf8]{inputenc}
\usepackage{amsfonts,amsmath,amsthm,amssymb}
\usepackage[justification=centering]{caption}
\usepackage{color}
\usepackage{fullpage}
\usepackage{enumitem}   
\usepackage{hyperref,cleveref, float}
\usepackage{mathtools}
\usepackage{xcolor}
\usepackage{thm-restate}
\usepackage{comment}
\usepackage{xurl}
\usepackage{subcaption}
\usepackage{thmtools}
\usepackage{hyperref}
\usepackage{cleveref}			% for links
\usepackage{dsfont}
\usepackage{enumitem}
\usepackage{tikz}
\usepackage{shuffle}
\usepackage{ifthen}
\usepackage{young}
\newtheorem{theorem}{Theorem}[section]
\newtheorem*{theorem*}{Theorem}
\newtheorem{proposition}[theorem]{Proposition}
\newtheorem{lemma}[theorem]{Lemma}
\newtheorem{corollary}[theorem]{Corollary}

\theoremstyle{definition}
\newtheorem{example}[theorem]{Example}
\newtheorem{definition}[theorem]{Definition}

\usepackage{etoolbox}
\AtEndEnvironment{proof}{\setcounter{claim}{0}}
\newtheoremstyle{notation}
{1em}
{1em}
{}
{}
{\bfseries}
{.}
{.5em} 
{} 
\theoremstyle{notation}

\newcommand{\Avoid}{\mathrm{Av}}
\newcommand*{\y}{\mathbf{y}}
\newcommand*{\x}{\mathbf{x}}

\newcommand{\Sym}{\mathfrak{S}}

\newcommand{\sylv}{\equiv_{sylv}}
\newcommand{\ssylv}{\equiv_{\#-sylv}}
\DeclareMathOperator{\std}{\text{std}}

\newcommand{\PF}{\textnormal{PF}}

\newcommand{\D}{\mathcal{D}}

\newcommand{\pf}{\mathsf{pf}}

\newcommand{\Des}{\text{Des}}
\newcommand{\iDes}{\text{iDes}}
\usetikzlibrary{patterns}
\usetikzlibrary{calc}
\usetikzlibrary{ decorations.markings}
\newcommand{\cross}[2]{ \draw[thick] (#1,#2)--(#1-1,#2-1);
     \draw[thick] (#1-1,#2)--(#1,#2-1);}
\newcommand{\rsquare}[2]{ \fill[pattern=north west lines, pattern color=gray] (#1,#2) rectangle ++(-1,-1);}

\newcommand{\cirw}[2]{ \draw[fill = white] (#1-0.5,#2-0.5) circle (.2);}

\newcommand{\shadrothe}[1]{
 \foreach \y [count=\x] in #1 {
     \foreach \i [count=\j] in #1{
         \ifthenelse{\x < \j}{\draw[ultra thick] (\j-1, \y-.5) -- (\j, \y-.5);}{}
         \ifthenelse{\i > \y}{\draw[ultra thick] (\x-.5, \i-1) -- (\x-.5, \i);}{}
    }
    \draw[ultra thick] (\x-.5, \y-.5) -- (\x-.5, \y);
    \draw[ultra thick] (\x-.5, \y-.5) -- (\x, \y-.5);
    \cirw{\x}{\y}
    }
}
\newcommand{\lgshadrothe}[2]{
 \foreach \y [count=\x] in #1 {
     \foreach \i [count=\j] in #1{
         \ifthenelse{\x < \j}{\draw[ultra thick] (\j-1, \y-.5) -- (\j, \y-.5);}{}
         \ifthenelse{\i > \y}{\draw[ultra thick] (\x-.5, \i-1) -- (\x-.5, \i);}{}
    }
    \draw[ultra thick] (\x-.5, \y-.5) -- (\x-.5, \y);
    \draw[ultra thick] (\x-.5, \y-.5) -- (\x, \y-.5);
    \cirw{\x}{\y}
    \draw (\x-.5, -.5) node {\x};
    \draw (-.5, \x-.5) node {\x};
    }
    \draw (0,0) grid (#2,#2);
}
\newcommand{\permrothe}[1]{
 \foreach \y [count=\x] in #1 {
     \cross{\x}{\y}
      \foreach \i [count=\j] in #1{
         \ifthenelse{\x < \j}{\ifthenelse{\y > \i}{\rsquare{\x}{\i}}{}}{}
         }
    }

}
\newcommand{\lgpermrothe}[2]{
 \foreach \y [count=\x] in #1 {
     \cross{\x}{\y}
      \foreach \i [count=\j] in #1{
         \ifthenelse{\x < \j}{\ifthenelse{\y > \i}{\rsquare{\x}{\i}}{}}{}
         }
    \draw (\x-.5, -.5) node {\x};
    \draw (-.5, \x-.5) node {\x};
    }
    \draw (0,0) grid (#2,#2);
}
\newcommand{\lgpermc}[2]{
 \foreach \y [count=\x] in #1 {
     \cross{\x}{\y}
    \draw (\x-.5, -.5) node {\x};
    \draw (-.5, \x-.5) node {\x};
    }
    \draw (0,0) grid (#2,#2);
}
\newcommand{\nwgrid}[2]{
\begin{scope}[decoration={markings, mark=at position 0.5 with {\arrow{>}}}] 
\coordinate (a) at (1,0);
\coordinate (b) at (0,1);
\foreach \row in {1,...,#2}{
\draw[postaction={decorate}] ($\row*(b)-(b)$)--($\row*(b)$);
\foreach \col  in {1,...,#1}{
\draw[postaction={decorate}] ($\row*(b)+\col*(a)-(b)$)--($\row*(b)+\col*(a)$);
\draw[postaction={decorate}] ($\row*(b)+\col*(a)$)--($\row*(b)+\col*(a)-(a)$);
}
}
\foreach \row  in {1,...,#1}{
\draw[postaction={decorate}] ($\row*(a)$)--($\row*(a)-(a)$);
}
\end{scope}
}
 \definecolor{new_green}{HTML}{009E73}
 \definecolor{new_pink}{HTML}{D35FB7}
 \definecolor{new_blue}{HTML}{0072B2}

\title{Enumerating Pattern Avoiding Parking Functions}
\author[Adenbaum]{Ben Adenbaum}
\address[B.~Adenbaum]{Department of Mathematics and Statistics, Villanova University, Villanova, PA, 19085}
\email{\textcolor{blue}{\href{mailto:benadenbaummath@gmail.com}{benadenbaummath@gmail.com}}}

\begin{document}

\begin{abstract}
In this paper, we complete the enumeration of the number of parking functions of length $n$ avoiding, in the sense defined by Qiu and Remmel, a permutation of length 3, answering several questions of Adeniran and Pudwell. Additionally, we provide explicit growth rates for the number of parking functions of length $n$ avoiding a monotonic pattern of any length. As a consequence of our techniques we additionally provide a combinatorial enumeration for the number of equivalence classes of words with a fixed content under the Sylvester congruence.
\end{abstract}
\maketitle
\section{Introduction}
Throughout we let $[n]=\{1,2,3,\ldots,n\}$ and denote the symmetric group on $[n]$ by $\Sym_n$. We write permutations in one-line notation $\pi=\pi_1\pi_2\cdots\pi_n$.
We recall that a permutation $\pi$ avoids a pattern $\sigma=\sigma_1\sigma_2\cdots\sigma_k$ if there is no subpermutation  of $\pi$ which is order isomorphic to $\sigma$.
Given a pattern $\sigma$, we let  $\Avoid_n(\sigma)$ to be the set of permutations in $\Sym_n$, which avoid the pattern $\sigma$.

One super set of permutations is parking functions. A tuple $p=(p_1,p_2,\ldots,p_n)\in[n]^n$ is a parking functions if and only if the weakly increasing rearrangement $p^\uparrow=(p_1',p_2',\ldots,p_n')$ satisfies $p_i'\leq i$ for all $i\in[n]$. 
For example, the tuple $(1,4,3,4)$ is not a parking function, while every permutation is a parking function. 
Parking functions receive their name as they can also be determined based on a parking process. The formal definition of parking functions is given in Definition~\ref{def:parking} together with a description of the parking process.
 We let $\PF_n$ denote the set of parking functions with $n$ cars and $n$ parking spots. Konheim and Weiss established that $|\PF_n|=(n+1)^{n-1}$ \cite{konheim1966occupancy}.

Generalizing work of Qiu and Remmel on pattern avoidance in ordered set partitions~\cite{qiu2019patterns}, where they enumerated parking functions which avoided 123, in~\cite{adeniran2023pattern} Adeniran and Pudwell studied and enumerated parking functions based on avoiding certain sets of patterns in an associated permutation obtained by recording the positions of the preferences. For an alternative view of pattern avoidance in parking functions by way of the outcome map, we refer the reader to~\cite{yan2025results}. In particular Adeniran and Pudwell studied and enumerated parking functions which avoided all patterns for every nontrivial subset of $\mathfrak{S}_3$ containing at least two permutations. One collection of problems left open was the enumeration for any $\sigma\in \mathfrak{S}_3\setminus \{123\}$. In this work we provide an answer for all remaining cases.  Additionally, we provide growth rates for parking functions avoiding any monotone pattern of any length. 

The first main result of the paper, in context of existing open problems, is an enumeration of the 321--avoiding parking functions in Theorem~\ref{thm:count}. 
\begin{theorem*}[Theorem~\ref{thm:count}]
	\[\pf_n(321) =\frac{1}{n+1}\cdot \sum_{a = 0}^{\lfloor \frac{n}{2}\rfloor}\frac{n!(2n-a)_{n-a}(n+a-1)_{a}}{(a!(n-2a)!(n-a+1)_a)^2}\]
\end{theorem*}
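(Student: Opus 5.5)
The plan is to reduce the count to pairs (parking content, permutation) and then stratify by the number $a$ of descents of the permutation. Under the Qiu--Remmel/Adeniran--Pudwell convention, a parking function $p$ is encoded by its content $(m_1,\dots,m_n)$, with $m_v=|\{i:p_i=v\}|$, together with the permutation $\tau(p)$. The permutation $\tau(p)$ lists the cars preferring $1$ in increasing order, then those preferring $2$, and so on. This map is a bijection onto pairs (parking content $m$, permutation $\tau$) such that $\Des(\tau)\subseteq B(m)$. Here $B(m)=\{m_1,\,m_1+m_2,\dots\}\cap[n-1]$ is the set of block boundaries, and the content condition is that $m_1+\cdots+m_i\ge i$ for all $i$. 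Hence
\[\pf_n(321)=\sum_{\tau\in\Avoid_n(321)}\#\{m \text{ parking content}: \Des(\tau)\subseteq B(m)\}.\]
A $321$-avoiding permutation has at most $\lfloor n/2\rfloor$ descents, and the outer sum in the statement runs over $a\in\{0,\dots,\lfloor n/2\rfloor\}$. I will therefore take $a=|\Des(\tau)|$ and aim to show that the $a$-th summand counts the pairs with $|\Des(\tau)|=a$.

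For the inner count I will translate contents into Dyck paths. The content $m$ corresponds to the Dyck path $N^{m_1}EN^{m_2}E\cdots N^{m_n}E$, and $B(m)$ is the set of heights at which a run of north steps ends and is followed by an east step. The condition $\Des(\tau)\subseteq B(m)$ then says that the path has a corner (an $NE$ turn) at each height $d\in\Des(\tau)$.

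For $\tau$ I will use the standard encoding of $321$-avoiders, either by a pair of noncrossing lattice paths (the left-to-right maxima path and the complementary path) or, equivalently, by the Billey--Jockusch--Stanley/Krattenthaler Dyck path. In that encoding the descents of $\tau$ sit at prescribed corner positions of the paths. I would then glue the two structures into a single configuration: a family of nonintersecting lattice paths in which each of the $a$ descents becomes a forced shared corner.

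The natural tool for counting such a family is the Lindström--Gessel--Viennot lemma. When the forced corners cut the paths into independent pieces, the resulting $2\times 2$ determinants factor into products of binomials. This is how I expect the squared denominator $(a!\,(n-2a)!\,(n-a+1)_a)^2$ to arise, together with the rising/falling factorials $(2n-a)_{n-a}$ and $(n+a-1)_a$.

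The global factor $\frac{1}{n+1}$ I would obtain from the cycle lemma. Instead of parking contents, I would count weak compositions of $n$ into $n+1$ parts, i.e. all functions $[n]\to\mathbb{Z}/(n+1)$, where the boundary condition is read cyclically. Each rotation class contains exactly one parking content, and I would check that rotation preserves the compatibility condition with $\tau$ once boundaries are read cyclically. This removes the ballot constraint and leaves an unconstrained count divisible by $n+1$.

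The main obstacle is exactly this compatibility step. In the linear setting the condition $\Des(\tau)\subseteq B(m)$ is not rotation invariant, so I expect to need a cyclic reformulation: for example, conjugating $\tau$ by the rotation, or applying the cycle lemma to the glued path family rather than to $m$ alone. The first thing I would try is to verify small cases ($n\le 5$) numerically against the formula, to pin down which cyclic version is correct. The remaining work is routine: evaluating the LGV determinants and simplifying the binomial products into the stated Pochhammer form.
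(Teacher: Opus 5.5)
Your encoding $p\mapsto(m,\tau)$ with $\Des(\tau)\subseteq B(m)$ is correct, and so is $\pf_n(321)=\sum_{\tau\in\Avoid_n(321)}\#\{m:\Des(\tau)\subseteq B(m)\}$. The plan fails at the next step, because the $a$-th summand does \emph{not} count the pairs with $|\Des(\tau)|=a$.

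Already at $n=4$ this breaks down. The only $321$-avoiders with two descents are $2143$ and $3142$. Each is compatible with exactly three parking contents, $(1,1,1,1)$, $(1,2,1,0)$ and $(1,2,0,1)$, since one needs $m_1=1$ and $3$ to be a partial sum. That gives $6$ pairs, while the $a=2$ term equals $\frac{1}{5}f^{(2,2)}s_{(2,2)}(1^5)=\frac{1}{5}\cdot 2\cdot 50=20$. Similarly, the pairs with one descent number $77$, against $63$ for $a=1$; only the total $97$ agrees.

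The $a$-th term is $\frac{1}{n+1}f^{(n-a,a)}s_{(n-a,a)}(1^{n+1})$. The squared denominator is just the hook product of $(n-a,a)$, entering once through the hook length formula and once through the hook content formula. The index $a$ is the length of the second row of the RSK shape, i.e.\ $n$ minus the length of a longest increasing subsequence of $\tau$. For $n=4$ the $20$ comes from $2413$ and $3412$ (descent set $\{2\}$, seven contents each) together with $2143$ and $3142$ (three each). So the terms mix descent numbers, and an LGV computation organised around descents as forced corners is aimed at the wrong statistic.

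The cyclic step, which you leave open, is resolved by not asking for rotation invariance of individual $\tau$ at all. For fixed $m$, the number of $\tau\in\Avoid_n(321)$ with $\Des(\tau)\subseteq B(m)$ equals the number of words with content $m$ having no strictly decreasing subsequence of length $3$. By RSK and Greene's theorem this is $\sum_{\ell(\lambda)\le 2}K_{\lambda m}f^{\lambda}$, which is symmetric in $m$ because Kostka numbers are. It is therefore constant on each rotation class of weak compositions of $n$ into $n+1$ parts. Each such class has $n+1$ elements, exactly one of which is a parking content. Hence
\[\pf_n(321)=\frac{1}{n+1}w_{n,n+1}(321)=\frac{1}{n+1}\sum_{a}f^{(n-a,a)}s_{(n-a,a)}(1^{n+1}),\]
and the hook length and hook content formulas yield the stated terms. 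This is the paper's argument, phrased there through Haiman's multiplicity $\frac{1}{n+1}s_\lambda(1^{n+1})$ of the Specht module in the parking function representation, and it needs no lattice-path evaluation.
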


Additionally we provide the growth rate for words of length $n$ on $[n]$, and consequently for parking functions, avoiding any monotone pattern in Theorem~\ref{thm:decreasing_growth_rate} and Theorem~\ref{thm:increasing_growth_rate}. 
The second main result is a combinatorial enumeration of the number of Sylvester, and consequently \#--Sylvester, classes for words with a fixed packed content in Theorem~\ref{thm:enum_of_132_words}.
\begin{theorem*}[Theorem~\ref{thm:enum_of_132_words}]
The number of Sylvester classes of words whose packed content is $\alpha\vDash n $ with $|\alpha| = k$ is \[\det\bigg[\binom{\displaystyle j+\sum_{\ell = 1}^i\alpha_{k+1-\ell}}{j-(i-1)}\bigg]_{1\le i,j\le k-1}\]	
\end{theorem*}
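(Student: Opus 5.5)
The plan is to reduce the count of Sylvester classes to a count of canonical representatives, and then count those representatives with the Lindström--Gessel--Viennot lemma. The determinant of binomial coefficients in the statement has exactly the shape an LGV count produces.

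\textbf{Step 1: canonical representatives.} First I will show that every Sylvester class of words with packed content $\alpha$ contains exactly one word avoiding $132$, taken with the orientation convention the paper uses for the Sylvester relation. This justifies the label \texttt{thm:enum\_of\_132\_words}.
\begin{itemize}
\item \emph{Existence.} Whenever a word contains an occurrence of $132$ that a Sylvester move can act on, apply the move. Each move strictly decreases the word in a fixed total order (say lexicographic, or the inversion count with respect to $\std$), so the process terminates. I then check that a word admitting no further move avoids $132$.
\item \emph{Uniqueness.} I will use the standard invariant of the Sylvester congruence: insertion of the word, read right to left, into a binary search tree, with equal letters sent to a fixed side. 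Two words are congruent if and only if they give the same labelled tree. A $132$-avoiding word can be recovered from its tree by one fixed reading order, so distinct $132$-avoiding words lie in distinct classes.
\end{itemize}
Consequently the number of Sylvester classes equals the number of $132$-avoiding words with content $\alpha$.

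\textbf{Step 2: encoding as paths.} To count $132$-avoiding words of content $\alpha=(\alpha_1,\dots,\alpha_k)$, I will build the word by inserting letters from the largest value $k$ down to $1$.
\begin{itemize}
\item After the letters $k, k-1, \dots, k-i+1$ have been placed, the partial word has length $\sum_{\ell=1}^i \alpha_{k+1-\ell}$. This is the partial sum that appears in the $i$-th row of the matrix.
\item Avoiding $132$ says that a smaller letter may not occur to the left of a pair consisting of a larger letter followed by a middle letter. This should force the positions available to each new value to form an initial segment, determined by a single ``frontier'' parameter, and constrain how the new copies interleave.
\item I will record, for each $i\le k-1$, a lattice path $P_i$ built from the north/east steps of the interleaving pattern. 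The path should start at a point depending on $\sum_{\ell=1}^i\alpha_{k+1-\ell}$ and end at a point indexed by $j$.
\item The target is that the number of single paths from the $i$-th source to the $j$-th sink is $\binom{j+\sum_{\ell=1}^i\alpha_{k+1-\ell}}{j-(i-1)}$.
\item The $132$-avoidance condition between consecutive levels should translate exactly into the paths being vertex-disjoint.
\end{itemize}

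\textbf{Step 3: LGV.} The sources and sinks will be arranged so that the only nonintersecting family connects source $i$ to sink $i$. This is the usual compatibility check: sources and sinks lie on two monotone lines. The LGV lemma then gives the stated determinant. As sanity checks I will verify the cases $k=2$ and $\alpha=(1,\dots,1)$; in the latter case the count of $132$-avoiding permutations must reproduce the Catalan number, which is also the number of Sylvester classes of permutations.

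The main obstacle is Step 2: finding the precise encoding so that noncrossing of paths is \emph{equivalent} to $132$-avoidance and the endpoints produce the shifted lower index $j-(i-1)$. My first attempt will track, for each level $i$, the position of the leftmost occurrence of a letter $\ge k-i+1$ relative to the letters already placed. I will take the path $P_i$ to be the staircase that separates the letters $\ge k-i+1$ from the smaller ones. If nested staircases turn out to touch rather than cross, I will shift $P_i$ by $(i-1)$ units diagonally. That shift is the likely source of the $j-(i-1)$ in the lower index.
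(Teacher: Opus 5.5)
\textbf{The gap is Step 2, and it carries the entire enumerative content of the theorem.}
\begin{itemize}
\item You never specify the encoding, the sources and sinks, or why a single path from the $i$-th source to the $j$-th sink is counted by $\binom{j+A_i}{j-(i-1)}$, where $A_i=\sum_{\ell=1}^i\alpha_{k+1-\ell}$.
\item You also never show why nonintersection is equivalent to the avoidance condition. You only describe what such an encoding ought to achieve.
\end{itemize}

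\textbf{The Step 2 heuristics also rest on an inaccuracy in Step 1.} The canonical representative of a class is the word whose \emph{standardization} avoids $132$. That is the word with no subword $x\,z\,y$ with $x\le y<z$. Besides the Cayley pattern $132$, this also excludes $a\,c\,a$ with $a<c$, which is the case $a=b$ of the relation.

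Read literally, ``the number of classes equals the number of $132$-avoiding words'' is false already for $k=2$. For content $(2,1)$, the words $112,121,211$ all avoid $132$. But $121\equiv_{sylv}211$, so there are $2=\alpha_2+1$ classes, as the formula says.

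The excluded pattern $a\,c\,a$ is exactly what makes the count work, and it contradicts your sketch in two ways:
\begin{itemize}
\item It forces all copies of each letter to be consecutive, so new copies do not ``interleave'' at all.
\item The $132$ condition forces the block of a new smallest letter to sit where everything to its right is weakly increasing. That is a final segment, not an initial one.
\end{itemize}

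\textbf{How the insertion scheme closes up.} With these two facts, your scheme does work. It would give a genuinely shorter route than the paper's. The paper passes from inverse descent sets of $132$-avoiding permutations, through inversion, the Rothe-diagram Dyck path and reversal, to the peak-insertion lemma, and only then applies LGV to Dyck paths with fixed ascent composition.

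\begin{enumerate}
\item Insert $k,k-1,\dots,1$ in turn. If the current word has maximal weakly increasing suffix of length $L$, the block of the new letter can go in any of $L+1$ gaps. The block followed by whatever lies to its right becomes the new maximal weakly increasing suffix.
\item Let $D_i$ be the number of letters larger than $k-i$ to the left of the block of $(k-i)$'s. This identifies representatives with sequences $0\le D_1\le\cdots\le D_{k-1}$ with $D_i\le A_i$.
\item Your diagonal shift $e_i=D_i+i-1$ turns these into sequences $0\le e_1<\cdots<e_{k-1}$ with $e_i\le A_i+i-1$.
\item These are exactly the nonintersecting families in the north/west lattice from $s_i=(A_i+i-1,\,i-1)$ to $t_i=(0,i)$, where $P_i$ takes its unique north step at $x=e_i$.
\item There are $\binom{A_i+j}{j-(i-1)}$ paths from $s_i$ to $t_j$, and only the identity pairing admits a nonintersecting family. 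LGV then yields the determinant.
\end{enumerate}

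None of this is in the proposal, so as it stands it does not prove the theorem.
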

 Previously in~\cite{novelli2020hopf} a representation theoretic interpretation of this enumeration by way of noncommutative Lagrange inversion, as described in~\cite{novelli2008noncommutative,novelli2020duplicial}, was considered. A consequence of these results is that we will be able to derive an enumeration for the parking functions which avoid a single nonmonotone pattern of size 3. The formal statement of this result is Theorem~\ref{thm:132and213enum}.
  We note that in \cite{baril2025ascent}, the authors answered the open question of Adeniran and Pudwell in the case where $\sigma =132$, but did not phrase it as such, where they found a bijection between parking functions which avoid 132 and intervals in a subposet of the ascent lattice on Dyck paths and enumerated such intervals via generating functions.

The remainder of the paper is structured as follows. In section~\ref{sec:back} we review relevant and necessary background material. In section~\ref{sec:monotone} we prove our results for parking functions avoiding a single monotone pattern of any length. We end in section~\ref{sec:not_mono} by proving an enumeration for Sylvester classes and \#--Sylvester classes of permutations of a fixed multiset allowing us to derive an enumeration for parking functions which avoid a fixed non-monotone pattern of size 3. 

\section{Background}\label{sec:back} In this section we provide an overview of parking functions, pattern avoidance, the RSK correspondence, as well as the Sylvester and \#--Sylvester congruences.
\subsection{Parking Functions}\label{subsec:pf}
We begin with a more formal definition of the classical parking process. Specifically consider the following scenario. 
	\begin{itemize}
	\item A one way street has $n$ labeled spots in order.
\item $n$ cars enter the street, one at a time, each having a preference for a specific parking spot.
 \item When a car enters the street, one of two things can happen:
	 \begin{itemize} \item If their preferred spot is not taken, they park in that spot.
	 \item If their preferred spot is taken, the car parks in the next available spot on the street if possible.
\end{itemize}
\end{itemize}

By encoding the sequence of each car's preferred spot as some function $p:[n]\to[n]$ we have the following formal definition.

\begin{definition}\label{def:parking}
	A sequence of preferences is a \emph{parking function} if at the end of the parking process everyone was able to park.
\end{definition}
Equivalently, a parking function $p:[n]\to [n]$ is a function satisfying that $|p^{-1}([i])| \ge  i$ for all $1\le i \le n$. Note that this is an alternative way of phrasing the increasing rearrangement condition of the introduction. Alternatively, one can view a parking function as a labeled Dyck path, as first introduced by Garsia and Haiman in~\cite{garsia1996natural}. Specifically by using the content condition and considering a Dyck path as a lattice path from $(0,0)$ to $(n,n)$ with steps of the form $(1,0),(0,1)$ which stays below the line $y=x$, by assigning sets of labels to the horizontal runs and listing the label sets in increasing order one recovers an alternative description of parking functions. Specifically the labels of the horizontal run at height $i$ are the listing of cars whose preference is for spot $i$. Associated to each parking function $p$ is the permutation $\pi(p)$ obtained by reading the labels of the horizontal runs in ascending order. We note that this is a variation in convention since these paths are normally expressed to lie above the line $y=x$ and are labelled upon their vertical steps. 
\begin{example}\label{ex:dyckpf}
Below is the Dyck path description of the parking function $23356165$. The associated permutation is $\pi(p)=612384957$.
	\begin{center}
	\begin{tikzpicture}[scale = .6]
		\draw[step=1.0,black,thin] (0,0) grid (9,9);
		\draw[-, ultra thick] (0,0)--(1,0)--(1,1)--(2,1)--(2,2)--(5,2)--(5,4)--(7,4)--(7,5)--(9,5)--(9,9);
		\node (6) at (0.5,0.5) {6};
		\node (1) at (1.5,1.5) {1};
		\node (2) at (2.5,2.5) {2};
		\node (3) at (3.5,2.5) {3};
		\node (8) at (4.5,2.5) {8};
		\node (4) at (5.5,4.5) {4};
		\node (9) at (6.5,4.5) {9};
		\node (5) at (7.5,5.5) {5};
		\node (7) at (8.5,5.5) {7};

	\end{tikzpicture}
	
\end{center}
\end{example}
A property of parking functions which will be critical is that for each possible set of preferences, i.e. the content of a function $f:[n]\to [n+1]$, there is precisely one cyclic shift of the weak composition describing the content which is the valid content of a parking function. For some exposition better describing this phenomena, we refer to the reader to~\cite{harris2026pollak} for a bijective explanation.
\subsection{Permutations and Pattern Avoidance}
In this subsection we review the necessities of permutations and pattern avoidance. For additional details, see~\cite{bevan2015permutation}. We now begin with the formal definition of patten avoidance in permutations.
\begin{definition}
		Let $\sigma \in \mathfrak{S}_k$. A permutation $\pi \in \mathfrak{S}_n$ is said to \emph{contain} $\sigma$ if there exist $i_1 < i_2 < \cdots < i_k$ such that the relative order of $\pi_{i_1} \pi_{i_2} \dots \pi_{i_k}$ is $\sigma$. A permutation $\pi$ avoids $\sigma$ if $\pi$ does not contain $\sigma$. The collection of all permutations of $\mathfrak{S}_n$ which avoid $\sigma$ is denoted by $\Avoid_n(\sigma)$.
	\end{definition}
	Often times we represent permutations as diagram of $X$'s in a rectangular grid. Specifically, using cartesian coordinates, we place an $X$ in position $(i, j)$ if  $j = \pi(i)$. When we mark the cells $i < j$ for which $\pi(i) > \pi(j)$ we obtain the \emph{Rothe diagram} of a permutation.
	\begin{example}
	Below is the diagram representation of the permutation $45637812$ described above.
		\begin{center}
			\begin{tikzpicture}[scale = .7]
				\lgpermrothe{{4,5,6,3,7,8,1,2}}{8}
			\end{tikzpicture}
		\end{center}
	\end{example}
For an example, the permutation $612384957$ contains 123, but avoids $321$.
	\begin{definition}[\cite{adeniran2023pattern}]
	  For a permutation $\sigma$, we say that a parking function $p$ contains $\sigma$ if $\pi(p)$ contains $\sigma$. Analogously, we say that $p$ avoids $\sigma$ if $\pi(p)$ avoids $\sigma$.  
	  For a set $S$ of permutations, let $\pf_n(S)$ denote the number of parking functions of length $n$ which avoid every permutation $\sigma \in S$.
\end{definition}
Recall that a Cayley word of length $n$ is a word $w_1 w_2 \dots w_n$ over the alphabet of $\mathbb{N}$ with $k = \max\{w_i | i \in [n]\}$ that when viewed as a function $w:[n]\to [k]$ is surjective.
\begin{definition}
	Let $c = c_1c_2\dots c_k$ be a Cayley word. Then a word $w= w_1w_2\dots w_n$ contains $c$ if there is a subword $w'=w_{i_1} w_{i_2}\dots w_{i_k}$ of $w$ such that $w'$ is order isomorphic to $c$. Similarly a word $w$ avoids $c$ if $w$ does not contain $c$.
\end{definition}
\begin{example}
$w=  233561635$ $w$ contains $123$, $w$ contains 312, but $w$ avoids $321$.
\end{example}
\begin{definition}
	Let $C$ be a set of (Cayley) words. Denote by $\PF_n(C)$ the number of parking functions of length $n$ that avoid every $c\in C$.
\end{definition}
It was shown by Jel\'{i}nek and Mansour in~\cite{jelinek2009wilf} that $\PF_n(\sigma)= \PF_n(\sigma')$ for all $\sigma,\sigma'\in \mathfrak{S}_3$. 
\begin{proposition}\label{prop:standardization}
    Let $\pi$ be the permutation obtained by reading the labels of the Dyck path of a parking function $p$. Then $\pi = \std(p)^{-1}$
\end{proposition}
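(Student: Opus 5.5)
Since the statement concerns the standardization $\std(p)$, I first fix the standard convention. For a word $p=p_1\cdots p_n$, $\std(p)$ is the permutation in $\Sym_n$ obtained by replacing the occurrences of the smallest letter by $1,2,\dots$ from left to right, then the occurrences of the next smallest letter by the next integers from left to right, and so on. Thus $\std(p)_i<\std(p)_j$ if and only if $p_i<p_j$, or $p_i=p_j$ and $i<j$. The plan is to show that $\pi(p)$ and $\std(p)$ are inverse to each other by computing $\std(p)(\pi_k)$ directly.

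First I unwind the definition of $\pi=\pi(p)$ from the labeled Dyck path. The horizontal run at height $i$ carries exactly the cars $j$ with $p_j=i$, listed in increasing order. Reading the runs from bottom to top therefore lists all cars sorted first by preference $p_j$ and then, among cars with equal preference, by label $j$. So $\pi=\pi_1\pi_2\cdots\pi_n$ is the unique ordering of $[n]$ with $\pi_k$ preceding $\pi_l$ exactly when $(p_{\pi_k},\pi_k)$ is lexicographically smaller than $(p_{\pi_l},\pi_l)$. In particular $\pi_k$ is the car whose key $(p_j,j)$ has rank $k$ in lexicographic order.

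Next I compare this with $\std(p)$. By the convention above, $\std(p)_j$ equals the rank of the pair $(p_j,j)$ among all pairs $\{(p_m,m)\}_{m\in[n]}$ in lexicographic order. Combining the two descriptions gives $\std(p)_{\pi_k}=k$ for every $k$, that is, $\std(p)\circ\pi=\mathrm{id}$. Hence $\pi=\std(p)^{-1}$.

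I expect no genuine obstacle; the only care needed is to confirm that the tie-breaking matches. Labels within a run are listed in increasing order, which corresponds to standardizing equal letters from left to right. The example gives a check: for $p=23356165$ (length 9 in the figure), the bottom-to-top, left-to-right reading produces the cars sorted by preference with ties broken by label, as the argument requires.
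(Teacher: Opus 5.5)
Your proof is correct and follows the same route as the paper: both identify $\std(p)^{-1}$ as the list of positions sorted by letter value, with ties broken left to right, which is exactly the bottom-to-top, left-to-right reading of the run labels. Your formulation via lexicographic ranks of the pairs $(p_j,j)$, giving $\std(p)_{\pi_k}=k$, simply makes the paper's one-line identification explicit.
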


\begin{proof}
    Let $p\in \PF_n$. Viewing $p$ as a labeled Dyck path $D$ with $k$ increasing runs, let $B_i = b_{i_1} \dots  b_{i_{|B_i|}}$ denote the word consisting of the labels of the $i$th increasing run of $D$ listed in increasing order. Then $\pi = B_1 \dots B_k$, that is the concatenation of the $B_i$. Since the labels of each $B_i$ are distinct and as sets partition $[n]$, it follows that $\pi$ is in fact a permutation.

Now, viewing $p$ as a word, let $\sigma = \std(p)^{-1}$. Note that $\sigma$ is then listing in increasing order the indices of the cars that preferred spot $i$ in increasing order of the spots. But this is precisely $\pi$, so $\sigma = \pi$.
\end{proof}

We note that while the above proposition is phrased purely in terms of ordered set partitions corresponding to a parking function, it applies generally to any ordered set partition.
\begin{corollary}\label{cor:dec_stand}
	A parking function $p$ avoids $k+1, k, \dots,1$ if and only if $p$ avoids $k+1,k,\dots, 1$ as a word.
\end{corollary}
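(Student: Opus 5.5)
By Proposition~\ref{prop:standardization}, $\pi(p)=\std(p)^{-1}$. So I would reduce the corollary to two facts. First, a permutation avoids the decreasing pattern $\delta_{k+1}=k+1,k,\dots,1$ if and only if its inverse does. Second, a word $p$ avoids $\delta_{k+1}$ if and only if $\std(p)$ avoids it.

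For the first fact, note that $\delta_{k+1}^{-1}=\delta_{k+1}$. If $\sigma$ contains $\delta_{k+1}$ at positions $i_1<\dots<i_{k+1}$ with values $\sigma(i_1)>\dots>\sigma(i_{k+1})$, then in $\sigma^{-1}$ the positions $\sigma(i_{k+1})<\dots<\sigma(i_1)$ carry the values $i_{k+1},\dots,i_1$. These values are decreasing, so $\sigma^{-1}$ contains $\delta_{k+1}$ as well. The converse follows by symmetry.

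For the second fact, recall that standardization labels equal letters left to right in increasing order. So for $i<j$ we have $\std(p)_i>\std(p)_j$ if and only if $p_i>p_j$ strictly. Equal letters always become ascents, and strict inequalities are preserved. Hence a strictly decreasing subsequence of length $k+1$ in $p$ is exactly a decreasing subsequence of length $k+1$ in $\std(p)$, with the same positions. Word containment of the Cayley word $k+1,k,\dots,1$ means a subword order isomorphic to it, which is exactly such a strictly decreasing subsequence.

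Combining the two facts: $p$ avoids $\delta_{k+1}$ as a parking function iff $\pi(p)=\std(p)^{-1}$ avoids it, iff $\std(p)$ avoids it, iff $p$ avoids it as a word.

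The only point needing care is the standardization convention. Ties must be broken left to right, which is what makes Proposition~\ref{prop:standardization} hold, since within a run the car indices are listed in increasing order. This convention is also what ensures equal letters never produce a descent. Because the pattern is decreasing, ties can never participate, so the argument is insensitive to any other subtlety. I expect no real obstacle beyond stating this convention explicitly.
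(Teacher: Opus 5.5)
Your proposal is correct and follows the same route as the paper, which states this corollary as an immediate consequence of Proposition~\ref{prop:standardization} ($\pi(p)=\std(p)^{-1}$). Your two facts supply exactly the details the paper leaves implicit: the self-inverse pattern $k+1,k,\dots,1$ is preserved under inversion, and left-to-right standardization turns the strictly decreasing subsequences of $p$ into exactly the decreasing subsequences of $\std(p)$; your point that this tie-breaking convention is what makes both steps work is correct.
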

\begin{corollary}\label{cor:inc_stand}
	A parking function $p$ avoids $1, 2, \dots , k+1$ if and only if $p$ does not contain a weakly increasing subword of length $k+1$.
\end{corollary}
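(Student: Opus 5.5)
The plan is to reduce everything to Proposition~\ref{prop:standardization}, i.e.\ $\pi(p)=\std(p)^{-1}$, and to use the standard tie-breaking rule for standardization: equal letters of $p$ are numbered left to right. We then compare increasing subsequences of $\pi(p)$ with weakly increasing subwords of $p$.

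First we record that inversion of permutations preserves the pattern $12\cdots(k+1)$. If $\tau$ is a permutation, an occurrence of $12\cdots (k+1)$ in $\tau$ is a set of points $(i,\tau(i))$ increasing in both coordinates. Transposing the diagram gives the same kind of chain for $\tau^{-1}$. Hence $\pi(p)=\std(p)^{-1}$ contains $12\cdots(k+1)$ if and only if $\std(p)$ does. So it suffices to show that $\std(p)$ contains an increasing subsequence of length $k+1$ if and only if $p$ contains a weakly increasing subword of length $k+1$.

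For the forward direction, take positions $i_1<\dots<i_{k+1}$ with $\std(p)_{i_1}<\dots<\std(p)_{i_{k+1}}$. Standardization is order preserving in the weak sense: $\std(p)_a<\std(p)_b$ implies $p_a\le p_b$. Therefore $p_{i_1}\le\dots\le p_{i_{k+1}}$, which is a weakly increasing subword of $p$.

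For the converse, take $i_1<\dots<i_{k+1}$ with $p_{i_1}\le\dots\le p_{i_{k+1}}$, and fix consecutive indices $i_j<i_{j+1}$.
\begin{itemize}
\item If $p_{i_j}<p_{i_{j+1}}$, then $\std(p)_{i_j}<\std(p)_{i_{j+1}}$ by the order-preserving property.
\item If $p_{i_j}=p_{i_{j+1}}$, then the tie-breaking rule numbers the earlier occurrence smaller. Since $i_j<i_{j+1}$, again $\std(p)_{i_j}<\std(p)_{i_{j+1}}$.
\end{itemize}
So these positions give an increasing subsequence of $\std(p)$ of length $k+1$, which completes the equivalence.

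The only delicate point is the second case of the converse: equal letters must be standardized left to right. This is exactly the convention under which Proposition~\ref{prop:standardization} holds, because cars preferring the same spot are listed in increasing order of their labels in $\pi(p)$. I would state this convention explicitly at that point. The same convention explains Corollary~\ref{cor:dec_stand}: there, equal letters can never form part of a strictly decreasing chain, so the decreasing pattern must be strict in $p$, whereas in the increasing case ties are allowed.
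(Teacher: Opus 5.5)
Your proof is correct and follows the same route the paper intends: the paper states this as an immediate consequence of Proposition~\ref{prop:standardization} without writing out details, and the details are exactly what you supply. Inversion preserves the pattern $12\cdots(k+1)$, and left-to-right tie-breaking in $\std$ turns weakly increasing subwords of $p$ into increasing subsequences of $\std(p)$ and back; this is the convention under which $\pi(p)=\std(p)^{-1}$ holds.
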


The last notation for permutations we introduce is the notion of a shuffle. Formally speaking, if $\pi\in\mathfrak{S}_n$ and $\sigma\in\mathfrak{S}_m$, the shuffles of $\pi \shuffle \sigma$ are the permutations $\tau \in \mathfrak{S}_{n+m}$ such that $\pi$ is a subsequence of $\tau$ and $n+\sigma$, where $n+\sigma$ is the sequence $n+\sigma_i$ for $i\in [m]$, is a subsequence of $\tau$. For an example, if $\pi =21$ and $\sigma = 12$ then $\pi\shuffle\tau = \{2134,2314,2341,3214,3241,3412\}$. In particular, the shuffle operation defines the product in the Malvenuto--Reuteneaur algebra~\cite{malvenuto1995duality}. 
\subsection{RSK}
For this subsection and the remainder of the paper we assume the reader is familiar with the notation and theory of partitions. The Robinson--Schensted--Knuth correspondence, hereafter referred to as $RSK$, is a bijection between nonnegative integer matrices and pairs of semistandard Young tableaux of the same shape. More precise details can be found in~\cite[Chapter\ 7]{stanley1999enumerative},~\cite[Chapter\ 4]{fulton1997young}. Recall the Kostka number $K_{\mu \lambda}$ is the number of semistandard tableau of shape $\mu$ and content $\lambda$ and $f^\mu$ is the number of standard young tableau of shape $\mu$. For the purposes of enumeration by using Young's rule suppose we have the collection of words with a fixed content with associated sorted partition $\lambda$. Then the number of words which map under $RSK$ to a fixed shape $\mu$ equals $K_{\mu \lambda} f^{\mu}$.  What will allow us to leverage this for the purposes of enumeration is the following variation on Greene's theorem. For an explicit discussion of how the reference of~\cite{greene1976structure} implies the theorem listed below, we refer the reader to~\cite[Theorem\ 2.1]{garver2019greene} and the surrounding exposition. 

\begin{theorem}[\cite{greene1976structure}]\label{thm:greene}
 Let $w$ be a word of length $n$ over the alphabet $[k]$. Suppose that $RSK(w)= (P,Q)$ where $P$ and $Q$ are of shape $\lambda$. Then the length of the longest decreasing subsequence of $w$ is equal to the number of rows of $\lambda$. Similarly, the longest weakly increasing subsequence of $w$ is equal to $\lambda_1$.
\end{theorem}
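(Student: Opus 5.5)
We prove Theorem~\ref{thm:greene} by reducing it to the classical Schensted theorem for permutations through standardization, using the insertion properties of RSK for words. Throughout, $RSK$ inserts letters by row insertion, where an inserted letter $x$ bumps the leftmost entry of the row that is \emph{strictly} greater than $x$. With this convention rows of $P$ are weakly increasing and columns strictly increasing. The first step is the standard fact that row insertion commutes with standardization. Let $\std(w)$ be the permutation obtained by relabelling equal letters of $w$ from left to right by consecutive integers. Then $P(\std(w))$ is the standardization of $P(w)$, and $Q(\std(w))=Q(w)$.

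We verify this by induction on the number of inserted letters. Suppose a later copy of $x$ (relabelled larger) is inserted into a row containing earlier copies of $x$. In $w$ it bumps the first entry strictly greater than $x$. In $\std(w)$ it bumps the first entry greater than its new label, which is the same position, since all earlier copies of $x$ carry smaller labels. Hence both tableaux have the same shape $\lambda$.

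The second step applies Schensted's theorem to the permutation $\std(w)$. The length of its longest increasing subsequence is $\lambda_1$, and the length of its longest decreasing subsequence is the number of rows $\ell(\lambda)$. We may quote this, or prove it in the usual way: insertion preserves the invariant that the entry in box $(1,j)$ of row one is the smallest possible last element of an increasing subsequence of length $j$. The column statement then follows either from applying this to the reverse word, which transposes $P$, or from Greene's general theorem.

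The third step transfers the two subsequence statistics from $\std(w)$ back to $w$. Standardization preserves strict inequalities between distinct letters and orders equal letters increasingly from left to right. Consequently, weakly increasing subsequences of $w$ are exactly the index sets of increasing subsequences of $\std(w)$. Strictly decreasing subsequences of $w$ are exactly the index sets of decreasing subsequences of $\std(w)$, because two equal letters can never both lie in a decreasing subsequence of $\std(w)$. Here the decreasing subsequence must be interpreted as strictly decreasing, which matches Corollary~\ref{cor:dec_stand}. Combining the three steps gives both claims of the theorem.

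The main obstacle is stating the tie-breaking conventions precisely. Both the bumping rule and the reading of ``decreasing'' as strict versus ``increasing'' as weak must be fixed so that the commutation in the first step holds exactly. Once that is done, the rest is a direct citation of Schensted's theorem.
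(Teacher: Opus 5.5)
Your proof is correct, but it does not follow the paper's route. The paper gives no argument for this statement at all: it attributes the result to Greene \cite{greene1976structure} and defers to \cite[Theorem~2.1]{garver2019greene} for how Greene's theorem specializes to RSK on words. That citation delivers the full Greene--Kleitman invariants (each partial sum $\lambda_1+\cdots+\lambda_i$ is the maximal size of a union of $i$ weakly increasing subsequences, and dually for columns and strictly decreasing ones). The paper only ever uses the case $i=1$.

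You prove exactly that case. You standardize, quote Schensted's theorem for permutations, and transfer back using the same observation that drives Corollaries~\ref{cor:dec_stand} and~\ref{cor:inc_stand}. This is more elementary and self-contained. It also makes explicit a convention the statement leaves implicit: with the usual bumping rule, ``decreasing'' has to mean strictly decreasing. For $w=11$ the shape is $(2)$, although $11$ is a weakly decreasing subsequence of length $2$. The strict reading is precisely what Proposition~\ref{prop:rsk_hook_content} and Theorem~\ref{thm:count} need.

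One step should be tightened. In your first step, the justification ``all earlier copies of $x$ carry smaller labels'' handles the letter entering row one. It does not handle a letter $y$ bumped into a lower row, because the bumped copy is the leftmost $y$ of its row and need not be the most recent copy. For example, take $w=22121$, with $\std(w)=34152$: the final $1$ bumps the $2$ labelled $4$, while the $2$ labelled $5$ stays in row one.

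What makes the induction work is the invariant that the copies of each letter in $P(w)$ form a horizontal strip whose standardized labels increase from left to right. Copies of $y$ in row $r+1$ lie strictly left of every copy in row $r$. So the bumped $y$ outranks all of them, and it bumps the same position in both insertions. Because bumping paths move weakly left, it then lands in a column that keeps the invariant intact. With that invariant stated, the standardization lemma (which is standard in any case) is fully justified.
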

Now if we wish to properly enumerate we will need to know $f^\mu$. This count was first given by the classical hook-length formula of Frame, Robinson, and Thrall. 
 Before stating the enumeration we briefly recall the definition of hook lengths and the content of a cell.
 \begin{definition}
	For $u =(i,j)$ a cell in a partition $\lambda$, the content of $u$ is $i-j=c(u)$. The hook length of $u$, denoted by $h(u)$ is the number of cells in the same row and weakly to the right of $u$ or in the same column and weakly below $u$.
\end{definition}

\begin{example}[Hook Contents]
Below in the partition 643 each cell is labeled by its content.
	\[\begin{Young}
	0&1&2&3&4&5\cr
	{-1}&0&1&2\cr
	{-2}&{-1}&0\cr 
\end{Young}\]
\end{example}

\begin{example}[Hook Lengths]
Below in the partition 643 each cell is labeled by its hook length.
\[	\begin{Young}
	8&7&6&4&2&1\cr
	5&4&3&1\cr
	3&2&1\cr 
\end{Young}\]
\end{example}

\begin{theorem}[Hook Length Formula~\cite{frame1954hook}]\label{thm:hooklength}
 Let $f^\lambda$ denote the number of standard Young tableau of shape $\lambda$. Then \[f^\lambda = \frac{n!}{\displaystyle \prod_{u\in \lambda}h(u)}\]
\end{theorem}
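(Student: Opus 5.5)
The hook length formula is classical, and I would prove it by induction on $n=|\lambda|$ via the branching rule. Set $F(\lambda)=n!/\prod_{u\in\lambda}h(u)$. In any standard Young tableau of shape $\lambda$, the entry $n$ sits in a removable (outer) corner $c$. Deleting it leaves a standard Young tableau of shape $\lambda\setminus c$. Hence
\[f^\lambda=\sum_{c}f^{\lambda\setminus c},\]
with the sum over the outer corners of $\lambda$, and $f^\emptyset=1$. Since $F(\emptyset)=1$, it suffices to show that $F$ satisfies the same recursion. Dividing by $F(\lambda)$, this becomes
\[\sum_{c}\frac{F(\lambda\setminus c)}{F(\lambda)}=1.\]

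The first step is to compute each ratio explicitly. Removing the corner $c=(r,s)$ changes only the hook lengths of cells in row $r$ and column $s$, and each of these drops by one. This gives
\[\frac{F(\lambda\setminus c)}{F(\lambda)}=\frac1n\prod_{u\in\text{row }r,\,u\ne c}\frac{h(u)}{h(u)-1}\prod_{u\in\text{col }s,\,u\ne c}\frac{h(u)}{h(u)-1}.\]
Next I would encode $\lambda$ by its outer corners and inner corners, recorded through their contents, say $x_1,\dots,x_m$ for the outer corners and $y_0,\dots,y_m$ for the inner corners. The row and column products then telescope. The ratio attached to the corner with content $x_i$ becomes
\[\frac1n\prod_j (x_i-y_j)\Big/\prod_{j\neq i}(x_i-x_j).\]

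The identity to prove is therefore
\[\sum_i \frac{\prod_j (x_i-y_j)}{\prod_{j\ne i}(x_i-x_j)}=n.\]
I would obtain it by partial fractions applied to the rational function $\prod_j(z-y_j)/\prod_i(z-x_i)$. Its residues at the points $x_i$ are exactly the summands. Its expansion at infinity is
\[z+\Big(\sum x_i-\sum y_j\Big)+O(1/z),\]
so the sum of the residues equals the coefficient of $1/z$ in that expansion. One then checks that this coefficient is $n$, using $\sum x_i^2-\sum y_j^2=2n$ together with $\sum x_i=\sum y_j$. These two relations are standard facts about interlacing contents: both follow by induction on adding a single box.

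The main obstacle is the telescoping step, that is, matching the cells of the hook of a corner to the interlacing corner contents so that the product collapses exactly. I would verify it carefully on a rectangle, and also on the example $643$ above, before writing the general argument.

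As a fallback, I would use the Greene--Nijenhuis--Wilf hook walk. That argument proves the same identity probabilistically, by showing that the walk ends at corner $c$ with probability exactly $F(\lambda\setminus c)/F(\lambda)$.
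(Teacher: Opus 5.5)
The paper gives no proof of this statement; it is quoted from Frame--Robinson--Thrall. Your strategy is a standard self-contained proof and is sound in outline: the branching rule, the reduction to $\sum_c F(\lambda\setminus c)/F(\lambda)=1$, interlacing corner contents, and residues. However, two of your displayed identities are false as written, and their sign errors happen to cancel.

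Order the contents so that they interlace, $y_0<x_1<y_1<\cdots<x_m<y_m$. Then $\prod_j(x_i-y_j)$ has $m-i+1$ negative factors and $\prod_{j\ne i}(x_i-x_j)$ has $m-i$, so your formula for $F(\lambda\setminus c)/F(\lambda)$ is always negative. For $\lambda=(1)$ it gives $(0-1)(0+1)=-1$. This is not a convention issue: negating all contents leaves the quotient unchanged, since the numerator has $m+1$ factors and the denominator $m-1$. The telescoping really yields $\frac1n\prod_j|x_i-y_j|/\prod_{j\ne i}|x_i-x_j|$, that is,
\[\frac{F(\lambda\setminus c_i)}{F(\lambda)}=-\frac1n\,\mathrm{Res}_{z=x_i}R(z),\qquad R(z)=\frac{\prod_j(z-y_j)}{\prod_k(z-x_k)}.\]

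Likewise the true relation is $\sum_j y_j^2-\sum_i x_i^2=2n$. When a box is added at an addable content $c$, $c$ moves from the addable list to the removable list, and each of $c\pm1$ either becomes addable or stops being removable. So $\sum y-\sum x$ changes by $0$ and $\sum y^2-\sum x^2$ changes by $(c-1)^2+(c+1)^2-2c^2=2$, starting from $0$ at $\lambda=\emptyset$ (where $y_0=0$). Hence the $1/z$ coefficient of $R$ at infinity is $\frac12(\sum x_i^2-\sum y_j^2)=-n$, and the corrected sum is $-\frac1n(-n)=1$. For $\lambda=(2,1)$, for instance, $R(z)=(z^3-4z)/(z^2-1)=z-3/z+O(z^{-3})$, both residues equal $-3/2$, and each ratio is $\frac12$.

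The telescoping, which you rightly flag as the substantive step, still has to be written. Measure contents as column minus row, as in the paper's example, and let $c=(r,s)$. Split the cells of row $r$ to the left of $c$ into maximal runs of columns $a\le t\le b$ on which the column length $\lambda'_t=L$ is constant. Along a run the hook lengths are consecutive integers, so $\prod h/(h-1)$ collapses to $h(r,a)/(h(r,b)-1)=(x_i-y)/(x_i-x)$. Here $y=a-L-1$ is the content of the addable cell $(L+1,a)$ and $x=b-L$ is that of the removable corner $(L,b)$. For the run with $L=r$, if there is one, the denominator is simply $1$. An addable cell directly below $c$ contributes the trivial factor $x_i-y=1$. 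This accounts for exactly the corners of content less than $x_i$, and the column of $c$ handles those of larger content symmetrically. With these corrections your argument is a complete proof, and the hook-walk fallback would also work.
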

The other main representation theoretic enumeration which will be essential for us is the enumeration of semistandard tableaux of shape $\lambda$ with maximum entry $t$. By the definition of the Schur function $s_\lambda$ as a weighted sum over semistandard tableaux of shape $\lambda$, this count is thus just the evaluation where the first $t$ variables are evaluated at 1 and all others at $0$. This quantity has a known enumeration due to Stanley and is known as the hook content formula.
\begin{theorem}[{Hook Content Formula~\cite[Corollary\ 7.21.5]{stanley1999enumerative}}]\label{thm:hookcontent}
The number of semistandard Young tableau of shape $\lambda$ with maximum entry at most $t$ is $s_\lambda(1^t)$. In particular,
  \[s_\lambda(1^t) = \prod_{u\in \lambda}\frac{t+c(u)}{h(u)}.\]
\end{theorem}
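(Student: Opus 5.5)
The count $s_\lambda(1^t)$ is obtained by specializing the bialternant formula for $s_\lambda(x_1,\dots,x_t)$, and the resulting product of differences is then matched cell by cell with the hook lengths and contents. We assume $\ell(\lambda)\le t$; otherwise both sides vanish. Indeed, a column of length greater than $t$ cannot be filled strictly increasingly from $[t]$, and on the right-hand side the cell $(t+1,1)$ has content $-t$, so the factor $t+c(u)$ is zero. The first claim, $\#\mathrm{SSYT}(\lambda,[t])=s_\lambda(1^t)$, is immediate from the combinatorial definition $s_\lambda=\sum_T x^T$: setting $x_1=\dots=x_t=1$ and all other variables to $0$ kills exactly the tableaux using entries larger than $t$ and weights the remaining ones by $1$.

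For the product formula we use $s_\lambda(x_1,\dots,x_t)=\det(x_i^{\lambda_j+t-j})/\det(x_i^{t-j})$ and specialize $x_i=q^{i-1}$ rather than setting $x_i=1$ directly. Both determinants become Vandermonde determinants in the quantities $q^{\lambda_j+t-j}$. Writing $\ell_j=\lambda_j+t-j$, we obtain
\[
s_\lambda(1,q,\dots,q^{t-1})=\prod_{1\le i<j\le t}\frac{q^{\ell_i}-q^{\ell_j}}{q^{t-i}-q^{t-j}}.
\]
Letting $q\to1$ and cancelling the common powers of $(1-q)$ gives
\[
s_\lambda(1^t)=\prod_{i<j}\frac{\ell_i-\ell_j}{j-i}.
\]

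It remains to identify this with $\prod_u (t+c(u))/h(u)$. We use the standard identity
\[
\prod_{u\in\lambda}h(u)=\frac{\prod_i \ell_i!}{\prod_{i<j}(\ell_i-\ell_j)},
\]
which is valid for any number $t\ge\ell(\lambda)$ of rows, padding $\lambda$ with zeros. To prove it, fix row $i$. The multiset of hook lengths in row $i$ is $\{1,\dots,\ell_i\}\setminus\{\ell_i-\ell_j : j>i\}$: the hooks in row $i$ together with the gaps $\ell_i-\ell_j$ exactly exhaust $\{1,\dots,\ell_i\}$, which can be seen by walking along the boundary path. Taking the product over $i$ gives the identity.

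For the contents, note that row $i$ consists of cells $(i,j)$ with $j=1,\dots,\lambda_i$. These contribute the factor
\[
\prod_{j=1}^{\lambda_i}(t+j-i)=\frac{(t-i+\lambda_i)!}{(t-i)!}=\frac{\ell_i!}{(t-i)!},
\]
where the paper's stated convention $c=i-j$ should be read as column minus row. Hence
\[
\prod_u\frac{t+c(u)}{h(u)}=\frac{\prod_i\ell_i!/(t-i)!}{\prod_i\ell_i!/\prod_{i<j}(\ell_i-\ell_j)}=\frac{\prod_{i<j}(\ell_i-\ell_j)}{\prod_i(t-i)!}.
\]
Since $\prod_i (t-i)!=\prod_{i<j}(j-i)$, this equals the Vandermonde expression above, which completes the proof.

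The main obstacle is the row-by-row hook identity, specifically the claim that the hook lengths in row $i$ together with the numbers $\ell_i-\ell_j$ for $j>i$ form exactly $\{1,\dots,\ell_i\}$. This step needs a careful boundary-path argument. Alternatively, one can induct on removing a corner cell, but the boundary-path argument is the route we would take. Everything else is determinant algebra.
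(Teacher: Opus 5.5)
The paper gives no proof of this statement; it is quoted from the cited volume of Stanley. The only meaningful comparison is therefore with the standard argument there, and your proof is correct and is essentially that argument. Its ingredients are the principal specialization $x_i=q^{i-1}$ of the bialternant, the evaluation of both alternants as Vandermonde determinants, and a row-by-row hook lemma. The only difference is bookkeeping. Stanley keeps $q$ general, proves the $q$-analogue with factors $(1-q^{t+c(u)})/(1-q^{h(u)})$, and sets $q=1$ at the end. You pass to $q\to 1$ first and then match hooks and contents as integers, which is slightly shorter but loses the $q$-refinement. Your write-up also makes explicit something the citation hides: the argument rests on the equality of the tableau definition and the bialternant definition of $s_\lambda(x_1,\dots,x_t)$. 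That equality is a genuine theorem, and you are using it as a black box.

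The step you flag as the main obstacle closes in a few lines, so you need not rely on a boundary-path picture. With $h(i,c)=\lambda_i-c+\lambda'_c-i+1$, the equation $h(i,c)=\ell_i-\ell_j$ for $j>i$ is equivalent to $\lambda'_c+\lambda_j+1=c+j$. This cannot hold:
\begin{itemize}
\item if $\lambda_j\ge c$, then $\lambda'_c\ge j$ and the left side exceeds $c+j$;
\item if $\lambda_j<c$, then $\lambda'_c\le j-1$ and the left side is at most $c+j-1$.
\end{itemize}
The $h(i,c)$ are strictly decreasing in $c$ and lie in $[1,\ell_i]$, using $\lambda'_1\le t$. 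The $\ell_i-\ell_j$ are distinct and also lie in $[1,\ell_i]$. Since $\lambda_i+(t-i)=\ell_i$, the two families together exhaust $\{1,\dots,\ell_i\}$.

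Finally, your reading of the content as column minus row is the right one. It matches the paper's worked example. With (row, column) coordinates, the literal $c(i,j)=i-j$ would make the formula give $0$ rather than $1$ for $\lambda=(2)$, $t=1$.
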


\subsection{Sylvester congruences}
We briefly review the Sylvester congruence and the \#--Sylvester congruence. 
\begin{definition}[cf. \cite{hivert2005algebra}]
	Let $w$ be a word. Its binary search tree $P(w)$ is obtained as follows: reading $w$ from right-to-left, one inserts each letter in a binary search tree in the following way: if the tree is empty, one creates a node labeled by the letter; otherwise, this letter is recursively inserted in the left (resp., right) subtree if it is smaller than or equal to (resp., strictly greater than) the root.
\end{definition}
\begin{example}Below are the pair of trees obtained by inserting $155432$ and 554312.

\begin{center}
	\begin{tikzpicture}
			\node (Label) at (-2,-1) {$P(155432)=$};
			\node[circle,draw=black, inner sep=0pt,minimum size=5pt] (root) at (0,0) {2};
			\node[circle,draw=black, inner sep=0pt,minimum size=5pt] (r) at (.5,-.5) {3};
			\draw[-] (root) -- (r);
			
			\node[circle,draw=black, inner sep=0pt,minimum size=5pt] (rr) at (1,-1) {4};
			\draw[-] (rr) -- (r);
			\node[circle,draw=black, inner sep=0pt,minimum size=5pt] (rrr) at (1.5,-1.5) {5};
			\draw[-] (rrr) -- (rr);
			\node[circle,draw=black, inner sep=0pt,minimum size=5pt] (rrrl) at (1,-2) {5};
			\draw[-] (rrr) -- (rrrl);
			\node[circle,draw=black, inner sep=0pt,minimum size=5pt] (l) at (-.5,-.5) {1};
			\draw[-] (root) -- (l);
			\begin{scope}[shift = {(6,0)}]
			\node (Label) at (-2,-1) {$P(554312)=$};
			\node[circle,draw=black, inner sep=0pt,minimum size=5pt] (root) at (0,0) {2};
			\node[circle,draw=black, inner sep=0pt,minimum size=5pt] (r) at (.5,-.5) {3};
			\draw[-] (root) -- (r);
			\node[circle,draw=black, inner sep=0pt,minimum size=5pt] (rr) at (1,-1) {4};
			\draw[-] (rr) -- (r);
			\node[circle,draw=black, inner sep=0pt,minimum size=5pt] (rrr) at (1.5,-1.5) {5};
			\draw[-] (rrr) -- (rr);
			\node[circle,draw=black, inner sep=0pt,minimum size=5pt] (rrrl) at (1,-2) {5};
			\draw[-] (rrr) -- (rrrl);
			\node[circle,draw=black, inner sep=0pt,minimum size=5pt] (l) at (-.5,-.5) {1};
			\draw[-] (root) -- (l);
			\end{scope}

		\end{tikzpicture}

\end{center}
		\end{example}
\begin{definition}[\cite{hivert2005algebra}]
	Let $w_1,w_2 \in [n]^*$. Then $w_1, w_2$ are \emph{Sylvester adjacent} if there exist $u,v,w\in [n]^*$ and $a\le b < c \in [n]$ such that  $w_1 = u a c v b w$ and $w_2 = u c a v b w$.  Two words $u,v$ are said to \emph{Sylvester equivalent} if there is a sequence $u = w_1, w_2, \dots, w_k = v$ such that $w_i$ and $w_{i+1}$ are Sylvester adjacent. We denote this by $u\sylv v$

\end{definition}

\begin{example}
		One can observe that $155432\sylv 554312$ with a sequence of Sylvester adjacencies given as follows:
	$w_1 = 155432, w_2 = 515432, w_3 = 551432, w_4 = 554132, w_5 = 554312$
\end{example}

\begin{theorem}[\cite{hivert2005algebra}]
Let $u,v\in [n]^*$. Then $u\sylv v$ if and only if $P(u) = P(v)$. 
\end{theorem}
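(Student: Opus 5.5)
The plan is to show that the right-to-left binary search tree insertion $P$ is constant on Sylvester classes and separates distinct classes. The two directions are handled by different arguments: a local check for invariance, and a canonical-word construction for separation.

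\emph{Invariance under adjacency.} If $w_1 = u\,a\,c\,v\,b\,w$ and $w_2 = u\,c\,a\,v\,b\,w$ with $a\le b<c$, then $P(w_1)=P(w_2)$. Since insertion reads right to left, both words first build the same tree $T=P(v\,b\,w)$, which contains a node labelled $b$. Next $a$ and $c$ are inserted into $T$ in one of the two orders, and then the letters of $u$ are inserted identically. So it suffices to show that inserting $a$ then $c$ into $T$ gives the same tree as inserting $c$ then $a$.

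Follow the two search paths from the root. At every node $x$ lying on both paths, the two letters go the same way as long as they do not separate at $x$. They separate at $x$ exactly when $a\le x<c$, in which case $a$ goes left and $c$ goes right. Now consider the node labelled $b$. If the common portion of the paths reached $b$, then $a\le b<c$ forces separation there at the latest. So the paths separate at some existing node of $T$, strictly before either letter is placed as a new leaf. Beyond that node the two insertions act in disjoint subtrees and therefore commute.

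The one thing to verify is that the common prefix cannot end with a new leaf. That would require both letters to reach the same empty slot, so they would agree at every node, including $b$ if $b$ lies on the path. I would handle this by noting that the slot for $a$ lies inside the interval cut out by the ancestors of that slot. A short argument using the position of $b$ in the in-order reading of $T$ shows the paths must already have split at $b$ or at an ancestor of $b$. Transitivity of $\sylv$ then gives: $u\sylv v$ implies $P(u)=P(v)$.

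\emph{Separation.} For the converse, I would show that every word $u$ is Sylvester equivalent to a canonical word $c(P(u))$ that depends only on the tree. The natural choice is the reading word that writes, for a tree with root $r$, left subtree $L$ and right subtree $R$, the word $c(L)\,c(R)\,r$. Inserting this word right to left inserts $r$ first, and then the letters of $R$ and $L$ fall into their respective subtrees. By induction its tree is the given tree, so $c(T)$ is a valid representative.

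To show $u\sylv c(P(u))$, induct on length. Let $r$ be the last letter of $u$, which is the root of $P(u)$. Every other letter $x$ has $x\le r$ (left subtree) or $x>r$ (right subtree). Sylvester moves with $b=r$ as the witness allow swapping an adjacent pair $a\,c$ with $a\le r<c$ anywhere before the final $r$. Using such moves, all left letters can be bubbled before all right letters, giving $u\sylv u_L\,u_R\,r$. Here $u_L$ and $u_R$ are the subwords of small and large letters in their original order.

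Insertion restricted to each side does not see the other side, so $P(u_L)=L$ and $P(u_R)=R$. By induction $u_L\sylv c(L)$ and $u_R\sylv c(R)$. These equivalences can be performed inside the larger word, because a Sylvester move stays valid after appending letters on the right: the witness $b$ remains in the suffix. Prepending letters on the left is also harmless. This gives $u\sylv c(L)\,c(R)\,r=c(P(u))$. Hence $P(u)=P(v)$ implies $u\sylv c(P(u))=c(P(v))\sylv v$.

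The main obstacle is the first step, the commutation argument showing the two search paths split at an existing node. The ties between equal letters, which always go left, need careful handling there. The second step is mostly bookkeeping.
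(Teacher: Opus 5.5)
The paper does not prove this statement. It is quoted from Hivert--Novelli--Thibon and used as a black box, so there is no in-paper route to compare with. Your argument is a correct, self-contained proof. It has the standard two-part structure: invariance of $P$ under a single adjacency, and a canonical representative in each class.

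\emph{The step you leave as a sketch.} You only sketch why the search paths of $a$ and $c$ in $T=P(v\,b\,w)$ split at an existing node. This closes in one line, without in-order bookkeeping. Walk down the root-to-$b$ path in $T$. At each node $y$ strictly above $b$ there are two cases.
\begin{itemize}
\item If $b\le y$, then $a\le b\le y$, so $a$ goes left together with $b$.
\item If $b>y$, then $c>b>y$, so $c$ goes right together with $b$.
\end{itemize}
Hence $a$ and $c$ either separate at a proper ancestor of $b$, or both reach $b$, where $a\le b<c$ separates them. Ties cause no trouble. After the split, the two new leaves land in disjoint subtrees of the splitting node. Neither insertion changes the other's search path, so the two orders give the same tree.

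\emph{The converse.} The following points are all correct:
\begin{itemize}
\item bubbling left letters past right letters, using the final letter $r$ as the witness;
\item identifying the two subtrees of the root of $P(u)$ with $P(u_L)$ and $P(u_R)$;
\item noting that a move stays valid after letters are added on either side.
\end{itemize}
You do not actually need the check $P(c(T))=T$. The equivalence $u\sylv c(P(u))$, combined with the first half, already gives it, and $u\sylv c(P(u))$ alone suffices for the converse.
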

In particular, $u\sylv v$ if and only if the contents are equal and $std(u)\sylv std(v)$.

\begin{definition}
	Let $w_1,w_2 \in [n]^*$. Then $w_1, w_2$ are \emph{\#--Sylvester adjacent} if there exist $u,v,w\in [n]^*$ and $a< b \le c \in [n]$ such that  $w_1 = u b  v ac w$ and $w_2 = u b v ca w$.  Two words $u,v$ are said to \emph{\#--Sylvester equivalent} if there is a sequence $u = w_1, w_2, \dots, w_k = v$ such that $w_i$ and $w_{i+1}$ are \#--Sylvester adjacent. We denote this by $u\ssylv v$

\end{definition}

 \begin{example}
 		One can observe that $432115\ssylv 453211$ with a sequence of \#--Sylvester adjacencies given as follows:	$w_1 = 432115, w_2 = 432151, w_3 = 432511, w_4 =435211 , w_5 = 453211$
\end{example}

$u\ssylv v$ if and only if the contents are equal and $std(u)\ssylv std(v)$.

\section{Monotone Patterns}\label{sec:monotone}
In this section we establish a non-closed form for the exact enumeration for parking functions which avoid 321. We note that by using essentially the same techniques one can derive a similar formula for the enumeration for any monotone pattern, that is an increasing or decreasing pattern, of any length. 

Additionally we give the growth rate for any monotone pattern. To do so we will prove the analogous statements for words from which the parking function case will follow. Starting off, let $W_{n,k}$ denote the words of length $n$ whose entries are in $[k]$. For $C$ a collection of Cayley words, let $W_{n,k}(C)$ denote the collection of words $W_{n,k}$ which avoid each $c\in C$ and $w_{n,k}(C) = |W_{n,k}(C)|$. Similarly, when each $c\in C$ is a permutation, let $\overline{W}_{n,k}(C)$ denote the collection of words in $W_{n,k}$ whose standardization avoids each $c\in C$ and $\overline{w}_{n,k}(C)= |\overline{W}_{n,k}(C)|$.
\subsection{Avoiding Decreasing Patterns}
To begin, we formally state a folk theorem that will be essential for our enumeration.
\begin{proposition}\label{prop:rsk_hook_content}
    \[w_{n,k}(r,r-1,\dots, 1) =\sum_{\substack{\lambda\vdash n\\ \ell(\lambda) < r}}s_{\lambda}(1^k) f^\lambda.\]
    \[\overline{w}_{n,k}(1,2,\dots, r) =\sum_{\substack{\lambda\vdash n\\ \lambda_1 < r}}s_{\lambda}(1^k) f^\lambda.\]

\end{proposition}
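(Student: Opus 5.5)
Both identities follow from RSK restricted to words, combined with Greene's theorem (Theorem~\ref{thm:greene}). Regard a word $w\in W_{n,k}$ as a two-line array whose top row is $1,2,\dots,n$ and whose bottom row is $w_1,\dots,w_n$. RSK is then a bijection between $W_{n,k}$ and pairs $(P,Q)$ of the same shape $\lambda\vdash n$. Here $P$ is a semistandard Young tableau with entries in $[k]$, and $Q$ is a standard Young tableau, because the top row has distinct entries. Hence, for each fixed $\lambda$, the number of words in $W_{n,k}$ with RSK shape $\lambda$ is $s_\lambda(1^k)f^\lambda$, using the count $s_\lambda(1^k)$ from Theorem~\ref{thm:hookcontent}. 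Summing over all $\lambda$ recovers $k^n$, which is the content-summed form of Young's rule $\sum_\mu K_{\mu\lambda}f^\mu$ recalled above.

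For the first identity, a word contains $r,r-1,\dots,1$ exactly when it has a strictly decreasing subsequence of length $r$. Decreasing patterns among word entries must be strict, since the pattern is a permutation. By Theorem~\ref{thm:greene}, the longest strictly decreasing subsequence of $w$ has length $\ell(\lambda)$, where $\lambda$ is the RSK shape of $w$. So $w$ avoids $r\cdots 1$ if and only if $\ell(\lambda)<r$. Restricting the bijection to these shapes and summing $s_\lambda(1^k)f^\lambda$ gives the first formula.

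For the second identity, I will first show that $\std(w)$ contains $12\cdots r$ if and only if $w$ has a weakly increasing subsequence of length $r$. This is the word version of Corollary~\ref{cor:inc_stand}. Standardization labels equal letters from left to right with increasing values. Thus $w_i\le w_j$ with $i<j$ holds if and only if $\std(w)_i<\std(w)_j$, so weakly increasing subsequences of $w$ correspond exactly to increasing subsequences of $\std(w)$. By Theorem~\ref{thm:greene}, the longest weakly increasing subsequence of $w$ has length $\lambda_1$. Hence $w\in\overline{W}_{n,k}(12\cdots r)$ if and only if $\lambda_1<r$, and summing over those shapes gives the second formula.

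The main point needing care is matching the conventions: row insertion must bump the leftmost entry strictly greater than the inserted letter, so that the first row tracks weakly increasing subsequences and the column length tracks strictly decreasing ones. I would also check that the version of Greene's theorem quoted for words (via~\cite{garver2019greene}) uses this same convention. Everything else is a direct restriction of the RSK bijection.
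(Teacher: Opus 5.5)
Your proposal is correct and follows the same route as the paper. RSK on words gives exactly $s_\lambda(1^k)f^\lambda$ words of shape $\lambda$, and Greene's theorem turns the two avoidance conditions into $\ell(\lambda)<r$ and $\lambda_1<r$. You also spell out the standardization step that the paper leaves implicit (its Corollary~\ref{cor:inc_stand} states the parking-function case): for $i<j$, $w_i\le w_j$ if and only if $\std(w)_i<\std(w)_j$.
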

\begin{proof}
 This statement for $w_{n,k}(r,r-1,\dots, 1)$ immediately since the words which avoid a decreasing subword of length $r$ are precisely those which under $RSK$ map to a pair of tableau of shape $\lambda$ where $\lambda$ has at most $r-1$ parts and the entries are at most $k$. As for the statement for $\overline{w}_{n,k}$, this follows as the length of the first row of $\lambda$ is the length of the longest weakly increasing subword.
\end{proof}
In general the above formula can be computed relatively efficiently, but requires summing over polynomially many terms depending on $r$. When $r=3$ the enumeration simplifies dramatically.
\begin{corollary}\label{thm:word_count}
  \[ w_{n,k}(321) = \sum_{a = 0}^{\lfloor \frac{n}{2}\rfloor}\frac{n!(k+n-a-1)_{n-a}\cdot(k+a-2)_{a}}{(a!(n-2a)!(n-a+1)_a)^2}\]
\end{corollary}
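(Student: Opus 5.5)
The plan is to specialize Proposition~\ref{prop:rsk_hook_content} to $r=3$ and evaluate each summand in closed form using Theorem~\ref{thm:hooklength} and Theorem~\ref{thm:hookcontent}. When $r=3$, the condition $\ell(\lambda)<3$ means $\lambda$ has at most two rows. So the partitions are exactly $\lambda=(n-a,a)$ with $0\le a\le \lfloor n/2\rfloor$, which gives
\[w_{n,k}(321)=\sum_{a=0}^{\lfloor n/2\rfloor} s_{(n-a,a)}(1^k)\, f^{(n-a,a)}.\]
Both factors share the hook product $H_a=\prod_{u\in(n-a,a)}h(u)$ as a denominator. Hence each summand equals $n!\cdot\prod_u (k+c(u))/H_a^2$, and it remains to compute $H_a$ and the content product. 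Throughout, $(x)_m=x(x-1)\cdots(x-m+1)$ denotes the falling factorial.

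First I compute $H_a$. In the first row, a cell in column $j\le a$ has a cell below it, so its hook length is $n-a-j+2$. These values run from $n-a+1$ down to $n-2a+2$, and their product is $(n-a+1)_a$. The cells in columns $j>a$ have hook lengths $n-a-j+1$, running from $n-2a$ down to $1$, with product $(n-2a)!$. The second row contributes $a!$. Thus
\[H_a=a!\,(n-2a)!\,(n-a+1)_a,\]
which matches the squared factor in the denominator of the claimed formula.

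Next I compute the content product. The hook content formula uses the convention that the content is column index minus row index. The paper's displayed definition reads $i-j$, but its example is consistent with column minus row; I will state this convention explicitly. The first row contributes $\prod_{j=1}^{n-a}(k+j-1)=k(k+1)\cdots(k+n-a-1)=(k+n-a-1)_{n-a}$. The second row contributes $\prod_{j=1}^{a}(k+j-2)=(k-1)\cdots(k+a-2)=(k+a-2)_a$. Multiplying gives $n!\,(k+n-a-1)_{n-a}(k+a-2)_a/H_a^2$ for each summand, and summing over $a$ yields the corollary.

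The only real obstacle is bookkeeping: getting the falling-factorial ranges right, particularly the $(n-a+1)_a$ coming from the first-row cells that lie above the second row. I will sanity-check the formula at $n=2$. There $a=0$ gives $k(k+1)/2$ and $a=1$ gives $k(k-1)/2$, for a total of $k^2$. This is correct, since no word of length $2$ contains $321$.
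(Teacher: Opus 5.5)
Your proposal is correct and follows the paper's proof essentially step for step: you specialize Proposition~\ref{prop:rsk_hook_content} to the two-row shapes $(n-a,a)$, compute the hook product $a!\,(n-2a)!\,(n-a+1)_a$ and the content product $(k+n-a-1)_{n-a}(k+a-2)_a$, and combine Theorems~\ref{thm:hooklength} and~\ref{thm:hookcontent}. Your explicit choice of the column-minus-row content convention (which matches the paper's example rather than its displayed definition) and your $n=2$ sanity check are sound additions.
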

\begin{proof}
    We first observe that if $\lambda\vdash n$, with $\lambda=(n-a,a)$ then $\displaystyle\prod_{u\in \lambda}h(u) = a!\cdot(n-2a)!\cdot (n-a+1)_a$. To see why, first that the set of hook lengths of the second row is $[a]$ and the product of those hook lengths is $a!$. Next, observe that set of hook lengths in the first row of boxes without a box below them is $[n-2a]$ and their product is $(n-2a)!$. Finally, the set of hook lengths in the first row of cells with a box below them is $\{n-a+1, n-a, \dots, n-2a+2\}$ and their product is $(n-a+1)_a$. 

    We now consider the hook contents of $\lambda$, where again $\lambda\vdash n, \lambda=(n-a,a)$. In this case the contents of the first row are $\{0, 1, \cdots, n-1-a\}$ and the contents of the second row are $\{-1, 0, \cdots, a-2\}$. If we then consider the product of the shifts of these contents by $k$ we immediately see that the resulting product is $(k+n-a-1)_{n-a}\cdot(k+a-2)_{a}$ by Theorem~\ref{thm:hookcontent}. By the preceding argumentation and Theorem~\ref{thm:hooklength} $f^{n-a,a} = \frac{n!}{a!(n-2a)!(n+1-a)_a}$. Additionally we have that $s_{n-a,a}(1^k) = \frac{(k+n-a-1)_{n-a}\cdot(k+a-2)_{a}}{a!(n-2a)!(n+1-a)_a}$. Consequently by Proposition~\ref{prop:rsk_hook_content}, we have that \[\displaystyle w_{n,k}(321) =\sum_{a=0}^{\lfloor\frac{n}{2}\rfloor}\frac{n!(k+n-a-1)_{n-a}\cdot(k+a-2)_{a}}{(a!(n-2a)!(n+1-a)_a)^2}.\]
\end{proof}
We are now in a position to answer the first of the enumerative questions raised by Adeniran and Pudwell in~\cite{adeniran2023pattern}. First note that by Corollary~\ref{cor:dec_stand} we have that $\PF_n(321) =\pf_n(321)$.
\begin{theorem}\label{thm:count}
    \[|\PF_n(321)| = \frac{1}{n+1} w_{n,n+1}(321)\]
\end{theorem}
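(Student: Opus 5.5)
The plan is to use the cyclic-shift (Pollak) argument recalled at the end of Subsection~\ref{subsec:pf}, checking that it respects $321$-avoidance. Consider all words $w \in W_{n,n+1}$, viewed as functions $[n]\to[n+1]$, and let $\mathbb{Z}_{n+1}$ act by adding a constant modulo $n+1$ to every letter (values in $\{1,\dots,n+1\}$). Every orbit has size exactly $n+1$, and each orbit contains exactly one parking function. Indeed, in the circular parking process on $n+1$ spots exactly one spot stays empty, and $w$ is a parking function precisely when that spot is $n+1$. Shifting all preferences by $j$ rotates the empty spot by $j$. Hence $|\PF_n| \cdot (n+1) = (n+1)^n$, and this is the statement we want to refine.

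The refinement needs two steps. First, by Corollary~\ref{cor:dec_stand}, a parking function avoids $321$ exactly when it avoids $321$ as a word, so $|\PF_n(321)|$ counts the words in $W_{n,n+1}(321)$ that are parking functions. Second, we would like the cyclic action to preserve $W_{n,n+1}(321)$; if it did, every orbit would lie entirely inside or entirely outside it, and the count would be $\frac{1}{n+1}w_{n,n+1}(321)$.

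The main obstacle is that the cyclic shift does \emph{not} literally preserve pattern avoidance, because the letter $n+1$ wraps around to $1$. For example, $w=(n+1)\,1$ avoids $21$, but its shift $1\,2$ does not. So I would not use the naive rotation. Instead I would use a content-level argument: the RSK count in Proposition~\ref{prop:rsk_hook_content} shows that the number of $321$-avoiding words depends only on the content multiset up to ordering. More precisely, for a fixed content $\beta$ (a weak composition of $n$ with $n+1$ parts), the number of words of content $\beta$ avoiding $321$ equals $\sum_{\ell(\lambda)\le 2} K_{\lambda,\beta} f^\lambda$. Kostka numbers are symmetric in the order of the content, since Schur functions are symmetric. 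Thus the number of $321$-avoiding words with content $\beta$ equals the number with any cyclic rotation of $\beta$.

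Now partition all weak compositions $\beta$ of $n$ into $n+1$ parts into cyclic-rotation classes. Each class has size $n+1$ (no composition has a nontrivial rotational symmetry, since $\gcd(n,n+1)=1$) and contains exactly one parking content, by the fact quoted from~\cite{harris2026pollak}. Summing the rotation-invariant count over each class gives $(n+1)$ times the count at its parking content. Summing over all classes then gives $w_{n,n+1}(321) = (n+1)\,|\PF_n(321)|$, which is the claim. The explicit formula then follows from Corollary~\ref{thm:word_count} with $k=n+1$. The delicate points to verify are that the count of avoiding words really depends on content only through $\sum K_{\lambda\beta}f^\lambda$, which is RSK restricted to a fixed content, and that Greene's theorem (Theorem~\ref{thm:greene}) restricts the shapes correctly.
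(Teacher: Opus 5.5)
Your argument is correct and is essentially the paper's proof. Both use RSK and Greene's theorem to write $|\PF_n(321)|$ as $\sum_{\ell(\lambda)\le 2} f^\lambda \sum_{\beta} K_{\lambda\beta}$, with $\beta$ ranging over parking contents. The paper evaluates the inner sum as the Specht multiplicity $\frac{1}{n+1}s_\lambda(1^{n+1})$ by citing Haiman, and notes afterwards that this follows from the cyclic-rotation property of contents; you derive the same evaluation directly from the symmetry of Kostka numbers and the fact that each free rotation class of contents contains exactly one parking content.

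One harmless slip: your illustrative example is backwards, since $(n+1)\,1$ contains $21$ while its shift $1\,2$ avoids it. It still shows that the naive rotation does not preserve avoidance.
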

\begin{proof}
The number of $321$ avoiding parking functions are precisely the elements who under $RSK$ give rise to a two row shape by Theorem~\ref{thm:greene}. Since parking functions are $\mathfrak{S}_n$ invariant, then the number of such functions is $\sum_{0\le a\le \lfloor\frac{n}{2}\rfloor} f^{n-a,a}\cdot C_\lambda$, where $C_\lambda$ is the multiplicity of the Specht module $S^\lambda$ in the representation arising from this $\mathfrak{S}_n$ action. As shown by Haiman in~\cite[Equation 28]{haiman1994conjectures}, $C_\lambda=\frac{1}{n+1}s_\lambda(1^{n+1})$. By Proposition~\ref{prop:rsk_hook_content}, this is just $\frac{1}{n+1}w_{n,n+1}(321)$.
\end{proof}
We note that the equality $C_\lambda=\frac{1}{n+1}s_\lambda(1^{n+1})$ will follow from the fact that the content of any parking function is the unique representative of the cyclic rotation of the content of a word $w$ of length $n$ with entries in $[n+1]$. Additionally, one can observe that the exact same argument will apply in the case where a parking function $p$ avoids $k+1,k,\dots, 1$.

Next, we turn our attention to the growth rate of $w_{n,n}(k+1,k,\dots,1)$.
 Previously the asymptotics of words avoiding $321$ was considered in~\cite{branden2005finite}. While we do not recover the same level of precision in the asymptotics, we do recover the growth rate. For readability, let $w_{n,k} = w_{n,n}(k+1,k,\dots, 1)$. 
\begin{theorem}\label{thm:decreasing_growth_rate} Then the growth rate of $w_{n,n}(k+1,k,\dots, 1)$ exists and is $\frac{(k+1)^{k+1}}{k^{k-1}}$. More formally, 
	\[\limsup_{n\to \infty} w_{n,k}^{\frac{1}{n}} = \frac{(k+1)^{k+1}}{k^{k-1}} \]
\end{theorem}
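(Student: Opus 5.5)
The plan is to work directly with the formula of Proposition~\ref{prop:rsk_hook_content},
\[w_{n,k}=\sum_{\substack{\lambda\vdash n\\ \ell(\lambda)\le k}} s_\lambda(1^n)\,f^\lambda .\]
The sum has at most $(n+1)^k$ terms, which is polynomial in $n$ for fixed $k$. Hence $w_{n,k}^{1/n}$ and $\max_\lambda\bigl(s_\lambda(1^n)f^\lambda\bigr)^{1/n}$ have the same $\limsup$. The problem therefore reduces to finding the exponential growth rate of the single largest term. I would get a matching lower bound from one explicit shape and an upper bound from a uniform estimate over all shapes with at most $k$ rows.

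\textbf{Lower bound.} Take $n=km$ and the rectangle $\lambda=(m^k)$; non-divisible $n$ change nothing, since padding a row changes the term only by a factor polynomial in $n$.
\begin{itemize}
\item By Theorem~\ref{thm:hooklength} and Stirling, $f^{(m^k)}=k^{n}\cdot n^{O(1)}$.
\item By Theorem~\ref{thm:hookcontent}, $s_{(m^k)}(1^n)=\prod_{i=1}^k\prod_{j=1}^m\frac{n+j-i}{(m-j)+(k-i)+1}$. Up to polynomial factors this is $\binom{n+m}{m}^k$.
\item Stirling gives $\binom{n+m}{m}^k=\exp\bigl(n[(1+\tfrac1k)\ln(1+\tfrac1k)+\tfrac1k\ln k]+o(n)\bigr)$, which equals $\bigl((k+1)^{k+1}/k^{k}\bigr)^{n}e^{o(n)}$.
\end{itemize}
Multiplying the two estimates gives the claimed rate $(k+1)^{k+1}/k^{k-1}$ from below. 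As a sanity check, for $k=1$ the count is $\binom{2n-1}{n}\sim 4^n$, which matches.

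\textbf{Upper bound.} Write $\lambda_i=\alpha_i n$ with $\alpha_1\ge\cdots\ge\alpha_k\ge0$ and $\sum\alpha_i=1$. I would turn $\log\bigl(s_\lambda(1^n)f^\lambda\bigr)=\log n!+\sum_{u}\log(n+c(u))-2\sum_u\log h(u)$ into a Riemann sum. It is cleanest to use the Vandermonde forms of both factors:
\[f^\lambda=n!\prod_{i<j}(\ell_i-\ell_j)\Big/\prod_i \ell_i!,\qquad \ell_i=\lambda_i+k-i,\]
and
\[s_\lambda(1^n)=\prod_{i<j}\frac{\lambda_i-\lambda_j+j-i}{j-i}\cdot\prod_i\frac{(n+\lambda_i-i)!}{(n-i)!\,(\lambda_i+k-i)!}.\]
The Vandermonde factors are polynomial in $n$. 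Stirling then gives, uniformly in $\lambda$,
\[\tfrac1n\log\bigl(s_\lambda(1^n)f^\lambda\bigr)=F(\alpha)+o(1),\qquad F(\alpha)=\sum_{i=1}^k\Bigl[(1+\alpha_i)\ln(1+\alpha_i)-3\alpha_i\ln\alpha_i\Bigr].\]
Here the term $-\sum_i\alpha_i\ln\alpha_i$ comes from $f^\lambda$, and the remaining terms come from the factorials in $s_\lambda(1^n)$.

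The function $F$ is a sum of the one-variable function $g(x)=(1+x)\ln(1+x)-3x\ln x$. Its second derivative is $g''(x)=\frac1{1+x}-\frac3x<0$, so $g$ is strictly concave on $(0,1]$. Maximizing a sum of identical concave functions on the simplex gives, by Jensen, the maximum at $\alpha_i=1/k$. The value there is $kg(1/k)=\ln\bigl((k+1)^{k+1}/k^{k-1}\bigr)$, matching the lower bound. Since the $o(1)$ is uniform and there are polynomially many terms, the $\limsup$, and indeed the limit, equals $(k+1)^{k+1}/k^{k-1}$.

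The main obstacle is making the Stirling approximation uniform over all $\lambda$, including boundary shapes where some $\alpha_i$ are $0$ or rows have bounded length. I would handle this by using the crude bounds $m!\ge (m/e)^m$ and $m!\le (m+1)^{m+1}e^{-m}$ for every factorial. These give $\tfrac1n\log(\text{term})\le F(\alpha)+O(\tfrac{\log n}{n})$ directly, with the convention $0\ln0=0$, and only an upper bound is needed there.
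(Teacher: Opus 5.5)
Your strategy is sound, but the upper bound as written does not close, because the exponent $F$ is wrong. By your own formulas, $s_\lambda(1^n)$ agrees with $\prod_{i=1}^k\binom{n+\lambda_i}{\lambda_i}$ up to factors polynomial in $n$, so it contributes $(1+\alpha_i)\ln(1+\alpha_i)-\alpha_i\ln\alpha_i$ per row. Likewise $f^\lambda$ agrees with $n!/\prod_i\lambda_i!$ up to polynomial factors and contributes $-\alpha_i\ln\alpha_i$ per row. There is no third copy of $-\alpha_i\ln\alpha_i$. The correct exponent is $F(\alpha)=\sum_{i=1}^k\bigl[(1+\alpha_i)\ln(1+\alpha_i)-2\alpha_i\ln\alpha_i\bigr]$, the same as in Lemma~\ref{lem:growth_rate_converging_shape}.

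With your $g(x)=(1+x)\ln(1+x)-3x\ln x$ one gets $kg(1/k)=(k+1)\ln(k+1)-(k-2)\ln k$. So your upper bound would be $(k+1)^{k+1}/k^{k-2}$, a factor $k$ above your lower bound (for $k=2$, $27$ against $27/2$). The asserted value $\ln\bigl((k+1)^{k+1}/k^{k-1}\bigr)$ is a second slip that masks the first. A consistency check catches this: at $\alpha_i=1/k$, $F$ must reproduce your own rectangle computation $k^{n}\binom{n+m}{m}^k$, and that happens only with coefficient $2$. After the correction, $g''(x)=\frac{1}{1+x}-\frac{2}{x}<0$ on $(0,1]$, Jensen still applies, and $kg(1/k)=(k+1)\ln(k+1)-(k-1)\ln k$, as needed. (A smaller typo: the displayed exponent $n\bigl[(1+\tfrac1k)\ln(1+\tfrac1k)+\tfrac1k\ln k\bigr]$ is that of a single $\binom{n+m}{m}$. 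It needs an extra factor $k$ to give your stated $\bigl((k+1)^{k+1}/k^{k}\bigr)^n$.)

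Once repaired, your argument is a genuinely different and more self-contained route than the paper's. The paper proceeds in four steps:
\begin{itemize}
\item It gets existence of the limit from supermultiplicativity $w_{n,k}w_{m,k}\le w_{n+m,k}$ and Fekete's lemma, with finiteness from $w_{n,k}\le\binom{2n-1}{n}|\Avoid_n(k+1,\dots,1)|$ and Regev's Stanley--Wilf limit $k^2$.
\item It computes growth rates only along shape sequences whose row proportions converge, extracting a convergent subsequence of maximizing shapes by compactness.
\item It maximizes by averaging adjacent rows (Lemma~\ref{lem:max_dec_sym}).
\item It takes the lower bound from rectangles with $n$ divisible by $k$.
\end{itemize}
Your uniform bound $\frac1n\log\bigl(s_\lambda(1^n)f^\lambda\bigr)\le F(\lambda/n)+O(\frac{\log n}{n})$, together with padding for general $n$, replaces Fekete, Regev and the compactness step at once. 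It treats shapes with vanishing rows explicitly and yields the limit directly. It also avoids having to choose the subsequence of maximizers so that it realizes the $\limsup$, a step the paper leaves implicit. Jensen replaces the smoothing lemma. The paper's route, in exchange, needs only pointwise asymptotics along convergent shape sequences, which it reuses verbatim in the increasing case. Yours requires the factorial bookkeeping to be done carefully and uniformly in $\lambda$, which is exactly where the error above entered.
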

For readability, the main steps of the proof of Theorem~\ref{thm:decreasing_growth_rate} are broken apart into multiple lemmas. First, we establish a pair of technical lemmas which will allow us to show that the desired growth rate exists.

\begin{lemma}
	For a fixed $k$, \[w_{n,k}\cdot w_{m,k} \le w_{n+m,k}\]
\end{lemma}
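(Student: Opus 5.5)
The plan is to build an explicit injection
\[
\Phi\colon W_{n,n}(k+1,k,\dots,1)\times W_{m,m}(k+1,k,\dots,1)\longrightarrow W_{n+m,n+m}(k+1,k,\dots,1)
\]
by shifted concatenation. Given $u=u_1\cdots u_n$ and $v=v_1\cdots v_m$, define
\[
\Phi(u,v)=u_1\cdots u_n\,(v_1+n)\cdots(v_m+n).
\]
This is a word of length $n+m$. Its first $n$ letters lie in $[n]$ and its last $m$ letters lie in $\{n+1,\dots,n+m\}$, so all entries lie in $[n+m]$.

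Injectivity is immediate. The prefix of length $n$ recovers $u$, and subtracting $n$ from the suffix of length $m$ recovers $v$.

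The main step is to check that $\Phi(u,v)$ still avoids $k+1,k,\dots,1$. An occurrence of this pattern in a word is a subsequence of length $k+1$ whose letters are strictly decreasing from left to right, since it must be order isomorphic to $k+1,k,\dots,1$.
\begin{itemize}
\item Suppose such a subsequence used a letter from the prefix followed by a letter from the shifted suffix. The earlier letter is at most $n$ and the later one is at least $n+1$, which contradicts strict decrease.
\item Hence every occurrence lies entirely inside the prefix or entirely inside the suffix.
\item An occurrence inside the prefix is an occurrence in $u$.
\item An occurrence inside the suffix gives, after subtracting $n$, an occurrence in $v$, because translation preserves relative order.
\end{itemize}
Both cases are excluded by hypothesis, so $\Phi(u,v)$ avoids the pattern.

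Since $\Phi$ is injective into $W_{n+m,n+m}(k+1,\dots,1)$, we obtain $w_{n,k}\,w_{m,k}\le w_{n+m,k}$. I do not expect a real obstacle here. The only point needing care is the observation that every letter of the prefix is smaller than every letter of the suffix. With that in hand, this supermultiplicativity is exactly what Fekete's lemma will need later to show that the growth rate in Theorem~\ref{thm:decreasing_growth_rate} exists.
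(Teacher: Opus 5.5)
Your proposal is correct and follows essentially the same route as the paper: the map $(u,v)\mapsto u_1\cdots u_n\,(v_1+n)\cdots(v_m+n)$ is shown to be injective, and it lands in the $k+1,k,\dots,1$--avoiders because every letter of the prefix is strictly smaller than every letter of the shifted suffix. Your version also spells out a step the paper leaves implicit, namely that any strictly decreasing subsequence must therefore lie entirely within the prefix or entirely within the suffix.
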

\begin{proof}
	To establish the result, we show that the natural map of $\iota: W_{n,n}(k+1,k,\dots, 1)\times W_{m,m}(k+1,k,\dots, 1)\to [n+m]^{n+m}$ defined by $(u,v)\mapsto w$ where $w$ is the word on $[n+m]^{n+m}$ \[w_i=\begin{cases}
	u_i & \text{ if }1\le i \le n\\
	v_{i-n}+n & \text{ if } n+1\le i \le n+m
\end{cases}\]
is an injective map to $W_{n+m,n+m}(k+1,\dots, 1)$. First, it is immediate that $\iota(u,v)$ also avoids a decreasing subword of length $k+1$, as each of the first $n$ letters is strictly less than each of the last $m$ letters. That this is injective also follows immediately as we can recover $u$ by restricting to the first $n$ letters of $w$ and $v$ by restricting to the last $m$ letters of $w$ and subtracting $n$ from each entry.
\end{proof}
\begin{lemma} For each $k$, 
	$w_{n,k} \le \binom{2n-1}{n}\cdot |\Avoid_n(k+1,k,\dots, 1)|$.
\end{lemma}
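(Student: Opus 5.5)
Each word $w\in W_{n,n}(k+1,k,\dots,1)$ is determined by two pieces of data: its content and its standardization. The content is the multiset of letters, i.e.\ a weak composition of $n$ into $n$ parts. The standardization is $\std(w)$, obtained by relabelling equal letters from left to right by increasing integers. The plan is to show that the map
\[
w\mapsto(\text{content}(w),\std(w))
\]
is injective and lands in a set of size at most $\binom{2n-1}{n}\cdot|\Avoid_n(k+1,k,\dots,1)|$.

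The first step is injectivity. Given the content $(c_1,\dots,c_n)$ and $\sigma=\std(w)$, recover $w$ by replacing the values $1,\dots,c_1$ in $\sigma$ by the letter $1$, the next $c_2$ values by the letter $2$, and so on. This inverts standardization, since standardization assigns the smallest labels to the smallest letters and breaks ties by position.

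The second step counts the possible contents. These are weak compositions of $n$ into $n$ nonnegative parts, and there are $\binom{2n-1}{n}$ of them by stars and bars.

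The third step shows that $\std(w)$ avoids the pattern whenever $w$ does. Suppose $\std(w)$ contained $k+1,k,\dots,1$ at positions $i_1<\dots<i_{k+1}$, so that $\std(w)_{i_1}>\dots>\std(w)_{i_{k+1}}$. Standardization gives equal letters increasing labels from left to right, so a strict label descent at positions $i<j$ forces $w_i\neq w_j$. It also preserves the relative order of distinct letters, so in fact $w_i>w_j$. Hence $w_{i_1}>\dots>w_{i_{k+1}}$ is a strictly decreasing subword of $w$, a contradiction. This is also the content of Corollary~\ref{cor:dec_stand} and Proposition~\ref{prop:standardization}, which can be cited instead.

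Combining the three steps, the injection sends $W_{n,n}(k+1,k,\dots,1)$ into the set of pairs (weak composition, permutation in $\Avoid_n(k+1,k,\dots,1)$), which gives the bound. The only point needing care is the tie-breaking convention in the third step, namely that equal letters never produce a descent in $\std(w)$. With the left-to-right convention this is immediate, so I expect no real obstacle.
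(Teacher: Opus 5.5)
Your proposal is correct and is essentially the paper's argument. The paper defines a surjection $\Pi(A,\sigma)$ from pairs (multiset $A$, permutation $\sigma\in\Avoid_n(k+1,k,\dots,1)$) onto $W_{n,n}(k+1,k,\dots,1)$ by arranging the sorted letters of $A$ according to $\sigma$, and proves surjectivity by checking that $\Pi(\text{content}(w),\std(w))=w$; this is exactly your injectivity step read in the other direction, and your tie-breaking argument for why $\std(w)$ inherits the avoidance spells out a step the paper only asserts.
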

\begin{proof}
	We show this by constructing a surjection $\Pi$ from pairs $(A,\sigma)$ where $A$ is a multiset of $[n]$ of size $n$ and $\sigma$ is a permutation avoiding $k+1, k, \dots ,1$, to words avoiding a decreasing subword of length $k+1$. For such a multiset $A$ and permutation $\sigma$ we define $\Pi(A,\sigma)$ as follows. Let $v$ be the word corresponding to the multiset $A$ written in increasing order. Then $\Pi(A,\sigma)$ is the word $v'$ defined by $v'_i = v_{\sigma(i)}$. To show that $\Pi$ is surjective to $W_{n,n}(k+1, k, \dots, 1)$ suppose that $w\in W_{n,n}(k+1, k, \dots, 1)$ and let $\sigma = \std(w)$. Since $w$ has no decreasing sequence of length $k+1$ if and only if $\std(w)$ has no decreasing sequence of length $k+1$, it follows that $\sigma\in \Avoid_n(k+1,k,\dots, 1)$. Letting $A$ be the multiset that is the content of $w$, it is easily seen then that $\Pi(A,\sigma)=w$. Thus $\Pi$ is a surjection. Since the number of multisets of $[n]$ of size $n$ is $\binom{2n-1}{n}$, the claim follows.
\end{proof}

Importantly, since the growth rate, also referred to as the Stanley--Wilf limit, of $|\Avoid_n(k+1,k,\dots, 1)|$ is $k^2$, as shown by Regev in~\cite{regev1981asymptotic}, and $\binom{2n-1}{n}$ has growth rate 4, it follows that $\limsup_{n\to \infty} w_{n,k}^{\frac{1}{n}}$ is at most $4k^2$. Consequently our desired growth rate will exist by Fekete's Lemma. The next lemma concerns the growth rate of $s_{\lambda^n}(1^n) f^{\lambda^n} $ for a family of partitions $\lambda^n \vdash n$ with $\ell(\lambda^n) \le k$ such that each of sequences of $\{\frac{\lambda^n_i}{n}\}$ converges. In particular, we care about such growth rates as these will all be terms in $w_{n,n}(k+1,k,\dots, 1)$.

\begin{lemma}\label{lem:growth_rate_converging_shape}
	Let $(\lambda^n)$ be a sequence of partitions such that $\lambda^n\vdash n$, $\ell(\lambda^n)\le k$ and $\lim_{n\to \infty} \frac{\lambda^n_i}{n}$ converges to $a_i$ for each $1\le i \le k$. Then \[\lim_{n\to\infty} (s_{\lambda^n}(1^n)f^\lambda)^{\frac{1}{n}} = \prod_{i=1}^k \frac{(1+a_i)^{1+a_i}}{a_i^{2a_i}}\]
\end{lemma}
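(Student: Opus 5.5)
The plan is to make $s_{\lambda^n}(1^n)f^{\lambda^n}$ completely explicit using Theorem~\ref{thm:hooklength} and Theorem~\ref{thm:hookcontent}, and then apply Stirling's formula row by row. Those two theorems give
\[
s_{\lambda}(1^n)f^{\lambda}=\frac{n!\,\prod_{u\in\lambda}(n+c(u))}{\prod_{u\in\lambda}h(u)^2}.
\]
Throughout, I write $x_i=\lambda^n_i/n$, so that $x_i\to a_i$. Since the $\lambda^n_i$ sum to $n$, we have $\sum_i x_i=1$ exactly. I use the convention $0^0=1$.

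The first step handles the content product. Row $i$ contributes $\prod_{j=1}^{\lambda_i}(n+j-i)$. Shifting the index by $i\le k$ changes this from $(n+\lambda_i)!/n!$ by at most a factor $(2n)^{k}$, so
\[
\prod_{u}(n+c(u))=\prod_{i=1}^k\frac{(n+\lambda_i)!}{n!}\cdot n^{O(k^2)}.
\]
The second step handles the hook product. Rather than summing hooks directly, I use the determinantal form of $f^\lambda$. Writing $\ell_i=\lambda_i+k-i$, we have $n!/\prod h(u)=f^\lambda=n!\,\prod_{i<j}(\ell_i-\ell_j)/\prod_i \ell_i!$. This gives $\prod h(u)=\prod_i\ell_i!/\prod_{i<j}(\ell_i-\ell_j)$. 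The Vandermonde factor lies between $1$ and $(2n)^{k^2}$, and $\ell_i!/\lambda_i!\le (2n)^k$, so $\prod h(u)=\prod_i\lambda_i!\cdot n^{O(k^2)}$. Since $k$ is fixed, all these polynomial factors disappear after taking $n$-th roots. We are therefore reduced to computing the limit of
\[
\Big(n!\,\prod_{i=1}^k\frac{(n+\lambda_i)!}{n!\,(\lambda_i!)^2}\Big)^{1/n}.
\]

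The third step applies Stirling's formula in the form $m!=(m/e)^m e^{O(\log(m+2))}$, valid uniformly for all $m\ge 0$ including $m=0$. The powers of $n$ must cancel exactly, not merely asymptotically, because a residual $n^{o(n)}$ would not vanish under $n$-th roots. The exponent of $n$ is
\[
n+\sum_i(n+\lambda_i)-kn-2\sum_i\lambda_i = n+kn+n-kn-2n=0,
\]
and the exponent of $e$ cancels by the same identity. What remains is exactly
\[
\prod_{i=1}^k\frac{(1+x_i)^{(1+x_i)n}}{x_i^{2x_i n}}\cdot e^{O(k\log n)}.
\]
Taking $n$-th roots gives $\prod_i (1+x_i)^{1+x_i}x_i^{-2x_i}\cdot e^{o(1)}$. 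The function $x\mapsto(1+x)^{1+x}x^{-2x}$ is continuous on $[0,1]$, including at $0$ with value $1$, so the limit is $\prod_i(1+a_i)^{1+a_i}/a_i^{2a_i}$, as claimed.

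I expect the main obstacle to be this error control. The proof must write every factorial in terms of the exact $\lambda^n_i$ rather than $a_in$. It must also treat rows with $a_i=0$ (including rows with $\lambda_i=0$ or $\lambda_i=o(n)$) uniformly, so that no $n^{o(n)}$ or $(\lambda_i)^{\lambda_i}$ term is left uncancelled. The device of writing everything in terms of $x_i$, with $\sum x_i=1$ holding exactly, is what I would rely on to make this work.
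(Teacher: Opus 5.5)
Your proposal is correct and follows essentially the same route as the paper. Both arguments replace the hook product by $\prod_i \lambda^n_i!$ and the content product by $\prod_i (n+\lambda^n_i)!/n!$ up to factors polynomial in $n$, then apply Stirling, using $\sum_i \lambda^n_i = n$ to cancel the powers of $n/e$ exactly. The differences are minor. You bound the hook product via the determinantal formula for $f^{\lambda}$, whereas the paper bounds it cell by cell using that every column has at most $k$ cells. Your two-sided uniform Stirling error $e^{O(\log(m+2))}$, together with continuity of $(1+x)^{1+x}x^{-2x}$ at $0$, also treats rows with $a_i=0$ more carefully than the paper's remark that $q(n)$ is bounded above by a polynomial.
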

\begin{proof}
To begin, fixing some $n$, let $HL = \prod_{u\in \lambda^n}h(u)$ denote the denominator of the hook length formula of $f^{\lambda^n}$. Note that for every cell $u$, the number of cells weakly below $u$ is at most $k$ and at least 1, since $\ell(\lambda^n)\le k$. So $\prod_{i=1}^k \lambda^n_i !\le HL \le \prod_{i=1}^k \frac{(\lambda^n_i+k)!}{k!}$. Consequently $1\le \frac{HL}{\prod_{i=1}^k\lambda^n_i!  }\le \prod_{i=1}^k \binom{\lambda_i^n+k}{k} <\binom{n+k}{k}^k$. Since $\binom{n+k}{k}$ is a polynomial of degree $k$, for the purposes of a growth rate, our calculation will remain unchanged by using $\prod_{i=1}^k \lambda_i^n !$ in place of $HL$. By similar logic if $HC = \prod_{i=1}^k \frac{(n+\lambda^n_i-1-i)!}{(n-i)!}$ is the numerator of the hook content formula we can replace $HC$ with $\prod_{i=1}^k \frac{(n+\lambda^n_i)!}{n!}$ for the purposes of computing a growth rate.

 We now consider the growth rate of the sequence
 $\frac{n!\prod_{i=1}^k (n+\lambda^n_i)! }{(n!)^k\prod_{i=1}^k (\lambda_i^n!)^2}$. Applying Stirling's approximation, observe that \[\frac{n!\prod_{i=1}^k (n+\lambda^n_i)! }{(n!)^k\prod_{i=1}^k (\lambda_i^n!)^2}\sim \frac{(\frac{n}{e})^n\prod_{i=1}^k (\frac{n(1+
 \frac{\lambda_{i}^n}{n})}{e})^{n(1+\frac{\lambda_i^n}{n})}}{\frac{n^{kn}}{e^{kn}}\prod_{i=1}^k (\frac{n}{e})^{2\lambda_i} (\frac{\lambda_i^n}{n})^{2\lambda_i^n}}\cdot q(n),\] where $q(n)$ is a function bounded above by a polynomial from the non-exponential terms of Stirling's approximation. Observe that by rearranging terms and since $\lambda^n$ is a partition of $n$ this simplifies to 
 \[\frac{\frac{n^{(k+2)n}}{e^{(k+2)n}}\prod_{i=1}^k (1+
 \frac{\lambda_{i}^n}{n})^{n(1+\frac{\lambda_i^n}{n})}}{\frac{n^{(k+2)n}}{e^{(k+2)n}}\prod_{i=1}^k  (\frac{\lambda_i^n}{n})^{2\lambda_i^n}}\cdot q(n) = \frac{\prod_{i=1}^k (1+
 \frac{\lambda_{i}^n}{n})^{n(1+\frac{\lambda_i^n}{n})}}{\prod_{i=1}^k  (\frac{\lambda_i^n}{n})^{2\lambda_i^n}}\cdot q(n).\]
  Finally \[\lim_{n\to\infty} \bigg(\frac{\prod_{i=1}^k (1+
 \frac{\lambda_{i}^n}{n})^{n(1+\frac{\lambda_i^n}{n})}}{\prod_{i=1}^k  (\frac{\lambda_i^n}{n})^{2\lambda_i^n}}\cdot q(n)\bigg)^{\frac{1}{n}}=\frac{\prod_{i=1}^k (1+a_i)^{1+a_i}}{\prod_{i=1}^k a_i^{2a_i}}\] since $\lim_{n\to\infty} \frac{\lambda_i^n}{n} = a_i$ and $q(n)$ is bounded above by a polynomial.
\end{proof}

For the purposes of determining the growth rate of $w_{n,k}$, and consequently the number of parking functions of length $n$ avoiding a decreasing subword of length $k+1$, the last ingredient will be to determine the maximal value of the quantity $\prod_{i=1}^k \frac{(1+a_i)^{1+a_i}}{a_i^{2a_i}}$ where $0\le a_i \le a_{i-1}$ and $\sum_{i=1}^k a_i = 1$.

\begin{lemma}\label{lem:max_dec_sym}
	For $1\ge a_1 \ge \dots \ge a_k \ge 0$ where $\sum_{i=1}^k a_i=1$ the value of $\prod_{i=1}^k \frac{(1+a_i)^{1+a_i}}{a_i^{2a_i}}$ is maximized when $a_i = \frac{1}{k}$ for all $i$.
\end{lemma}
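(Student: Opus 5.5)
Take logarithms: it suffices to maximize
\[F(a_1,\dots,a_k)=\sum_{i=1}^k g(a_i),\qquad g(x)=(1+x)\log(1+x)-2x\log x,\]
over the simplex $\sum_i a_i=1$, $a_i\ge 0$, with the convention $0\log 0=0$. The ordering constraint $a_1\ge\dots\ge a_k$ plays no role, because $F$ is symmetric. So I will maximize over the full simplex and note that the maximizer $(\frac1k,\dots,\frac1k)$ is ordered.

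The key step is to show that $g$ is strictly concave on $[0,1]$. For $x>0$ we have
\[g'(x)=\log(1+x)+1-2\log x-2,\qquad g''(x)=\frac{1}{1+x}-\frac{2}{x}=\frac{x-2(1+x)}{x(1+x)}=\frac{-x-2}{x(1+x)}<0.\]
Hence $g$ is strictly concave on $(0,1]$. Since $g$ is continuous on $[0,1]$ (because $x\log x\to 0$), it is concave on the closed interval as well.

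Jensen's inequality then gives
\[F(a)=\sum_{i=1}^k g(a_i)\le k\, g\Big(\frac1k\sum_i a_i\Big)=k\,g\Big(\frac1k\Big),\]
with equality exactly when all $a_i$ are equal. Therefore the maximum is attained at $a_i=\frac1k$ for every $i$. Exponentiating back yields the statement, and the maximal value of the product is
\[\Big(\frac{(1+\frac1k)^{1+\frac1k}}{(\frac1k)^{2/k}}\Big)^k=\frac{(k+1)^{k+1}}{k^{k-1}},\]
which matches the growth rate claimed in Theorem~\ref{thm:decreasing_growth_rate}.

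The only delicate point is the boundary, where some $a_i=0$ and the factor $a_i^{-2a_i}$ is interpreted as $1$. This is handled by the continuity of $g$ at $0$ and by the fact that Jensen's inequality holds for concave functions on closed intervals. I therefore expect no real obstacle; the main thing to check carefully is the sign computation of $g''$.
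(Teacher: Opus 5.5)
Your proof is correct and rests on the same key fact as the paper's, namely concavity of $g(x)=(1+x)\log(1+x)-2x\log x$ on $[0,1]$. You verify this via $g''(x)=-\frac{x+2}{x(1+x)}<0$, whereas the paper only asserts it. The paper argues by pairwise smoothing: replacing $a_j>a_{j+1}$ by their average strictly increases the product. Your direct use of Jensen's inequality, with its equality case, is the cleaner version of the same idea, since it does not tacitly need a compactness argument to guarantee that a maximizer exists.
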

\begin{proof}
	We show that if $a_j > a_{j+1}$, then the new sequence $b_\ell$ where $b_\ell = a_\ell$ for $\ell\neq j, j+1$ and $b_\ell = \frac{a_j+a_{j+1}}{2}$ for $\ell = j,j+1$  then $\prod_{i=1}^k \frac{(1+b_i)^{1+b_i}}{b_i^{2b_i}} > \prod_{i=1}^k \frac{(1+a_i)^{1+a_i}}{a_i^{2a_i}}$. To do so we consider $\log(\prod_{i=1}^k \frac{(1+b_i)^{1+b_i}}{b_i^{2b_i}})-\log(\prod_{i=1}^k \frac{(1+a_i)^{1+a_i}}{a_i^{2a_i}})$. Note that this simplifies to 
	$
		(2+a_{j}+a_{j+1})\log(1+\frac{a_j+a_{j+1}}{2})-2(a_j+a_{j+1})\log(\frac{a_{j}+a_{j+1}}{2}) - ((1+a_j)\log(1+a_j) + (1+a_{j+1})\log(1-a_{j+1}) - 2 a_j \log(a_j)-2a_{j+1}\log(a_{j+1})$
	since $b_{j}=b_{j+1}= \frac{a_j+a_{j+1}}{2}$. 
	This quantity is positive since the function $(1+x)\log(1+x)-2x\log(x)$ is concave down on the interval of $(0,1]$.
	Consequently, the maximum value of this function is achieved when $a_i = a_j=\frac{1}{k}$ for all $i,j$.
	\end{proof}
We are now in a position to prove our claimed growth rate. 

\begin{proof}[Proof of Theorem~\ref{thm:decreasing_growth_rate}]
	To prove our claimed growth rate, let $\lambda^n$ be a sequence of partitions so that the quantity $s_{\lambda^n}(1^{n}) f^{\lambda^n}$ is maximal amongst partitions of $n$ with at most $k$ parts. Note that we are not assuming that $\lambda^n$ is unique, and if this is not the case for any $n$ we can assume that we have arbitrarily chosen $\lambda^n$ among the maximal partitions. Next consider the sequences $\frac{\lambda^n_i}{n}$ and note that these are bounded sequences taking values in the interval $[0,\frac{1}{i}]$. Since there are only $k$ parts, we can find a sequence of partitions $\lambda^{n_s}$ such that each of the sequences $\frac{\lambda^{n_s}_i}{n_s}$ converges to a number $a_i$. Since this is a subsequence of the maximal partitions, the growth rate of $s_{\lambda^n}(1^{n})f^{\lambda^n}$ is equal to the growth rate of this subsequence, which is $\prod_{i=1}^k \frac{(1+a_i)^{1+a_i}}{a_i^{2a_i}}$ by Lemma~\ref{lem:growth_rate_converging_shape}. Consequently the growth rate of $s_{\lambda^n}(1^{n})f^{\lambda^n} = \prod_{i=1}^k \frac{(1+a_i)^{1+a_i}}{a_i^{2a_i}} \le \frac{(k+1)^{k+1}}{k^{k-1}}$ by Lemma~\ref{lem:max_dec_sym}. To see that this bound is in fact the growth rate, it suffices to consider the sequence of partitions $(n^k)$, that is the partitions of $nk$ whose shape is that of an $k\times n$ rectangle. By Lemma~\ref{lem:growth_rate_converging_shape} the growth rate of the sequence $s_{n^k}(1^{nk})f^{n^k}$ is $\frac{(k+1)^{k+1}}{k^{k-1}}$. Since $s_{n^k}(1^{nk})f^{n^k} < w_{nk,k}$, as it is a term in the sum, then $\frac{(k+1)^{k+1}}{k^{k-1}}$ is also a lower bound for the growth rate. Consequently $\lim_{n\to \infty} w_{n,k}^{\frac{1}{n}} = \frac{(k+1)^{k+1}}{k^{k-1}}$.
	\end{proof}

\subsection{Avoiding Increasing patterns}
We now turn our attention to the case of words, and consequently parking function, which avoid a weakly increasing subword of length $k+1$. Again for readability, let $\overline{w}_{n,k}=\overline{w}_{n,n}(1,2,\dots, k+1)$.
\begin{theorem}\label{thm:increasing_growth_rate} The growth rate of $\overline{w}_{n,n}(1,2,\dots, k+1)$ exists and is $\frac{k^{k+1}}{(k-1)^{k-1}}$. More formally, 
	\[\limsup_{n\to \infty} \overline{w}_{n,k}^{\frac{1}{n}} = \frac{k^{k+1}}{(k-1)^{k-1}}. \]
\end{theorem}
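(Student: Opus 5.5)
The plan is to follow the structure of the proof of Theorem~\ref{thm:decreasing_growth_rate}, passing through conjugate partitions. By Proposition~\ref{prop:rsk_hook_content},
\[\overline{w}_{n,k}=\sum_{\substack{\lambda\vdash n\\ \lambda_1\le k}} s_\lambda(1^n)f^\lambda .\]
Write $\mu=\lambda'$, so that $\ell(\mu)\le k$ and $f^\lambda=f^\mu$. Conjugation preserves hook lengths and negates contents. Hence Theorem~\ref{thm:hookcontent} gives $s_\lambda(1^n)=\prod_{u\in\mu}\frac{n-c(u)}{h(u)}$. The $i$th row of $\mu$ contributes contents $i-1,i-2,\dots,i-\mu_i$, so its numerator factor is $\prod_{j}(n-i+j)$ over $1\le j\le \mu_i$, which is within a polynomial factor of $(n)_{\mu_i}=\frac{n!}{(n-\mu_i)!}$. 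The denominators are handled exactly as in Lemma~\ref{lem:growth_rate_converging_shape}: replace $\prod h(u)$ by $\prod_i \mu_i!$ at polynomial cost. Thus each term has the same growth rate as
\[\frac{n!\prod_{i=1}^k (n)_{\mu_i}}{\prod_{i=1}^k(\mu_i!)^2}.\]

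\textbf{Step 1 (existence of the limit).} First I will prove the analogue of Lemma~\ref{lem:growth_rate_converging_shape}. If $\mu^n\vdash n$, $\ell(\mu^n)\le k$ and $\mu^n_i/n\to a_i$, then Stirling's approximation gives the limit
\[\prod_{i=1}^k\frac{1}{(1-a_i)^{1-a_i}a_i^{2a_i}}.\]
Indeed, $n!\prod_i n^{\mu_i}$ contributes $n^{2n}e^{-n}$ up to the factors $e^{-\mu_i}(1-a_i)^{-(1-a_i)n}$ coming from $(n-\mu_i)!$, and these cancel against $\prod(\mu_i!)^2\approx n^{2n}e^{-2n}\prod a_i^{2a_in}$.

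Next I need supermultiplicativity $\overline{w}_{n,k}\overline{w}_{m,k}\le \overline{w}_{n+m,k}$. The concatenation map must be reversed relative to the decreasing case. Given $u\in[n]^n$ and $v\in[m]^m$, put the shifted word $u+m$ first and $v$ second. Every letter of the first block then exceeds every letter of the second, so no weakly increasing subword uses both blocks, and the map is clearly injective.

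For an upper bound, the same surjection $\Pi$ from (multiset, permutation) pairs works. A word has a weakly increasing subword of length $k+1$ iff its standardization contains $12\cdots(k+1)$, so Regev's result gives growth at most $4k^2$. Fekete's Lemma then shows that the limit exists.

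\textbf{Step 2 (maximization).} I will maximize $g(a)=\sum_i \phi(a_i)$ with $\phi(x)=-(1-x)\log(1-x)-2x\log x$, subject to $\sum a_i=1$ and $a_i\ge 0$. Since $\phi''(x)=-\frac{1}{1-x}-\frac{2}{x}<0$ on $(0,1)$, $\phi$ is strictly concave. Therefore either the averaging argument of Lemma~\ref{lem:max_dec_sym} or Jensen's inequality gives the maximum at $a_i=1/k$. There the value is
\[\Bigl(\bigl(\tfrac{k}{k-1}\bigr)^{\frac{k-1}{k}}k^{\frac{2}{k}}\Bigr)^k=\frac{k^{k+1}}{(k-1)^{k-1}}.\]
The upper bound then follows exactly as in the proof of Theorem~\ref{thm:decreasing_growth_rate}. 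The sum has only polynomially many terms, so I choose maximizing shapes, pass to a subsequence on which the ratios converge, and apply the Step 1 formula. For the lower bound, take $\lambda'=\mu=(n^k)$ as a partition of $nk$, i.e.\ $\lambda$ is the $n\times k$ rectangle with $\lambda_1=k$. Its term is a summand of $\overline{w}_{nk,k}$, and its growth rate is the claimed value. Combined with the existence of the limit, this gives the result.

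\textbf{Main obstacle.} The step needing most care is the uniformity of the Stirling estimate when some $a_i\in\{0,1\}$, where factorials like $(n-\mu_i)!$ or $\mu_i!$ are not large. I will handle this as in the decreasing case. Such factors are absorbed by the convention $0^0=1$ together with crude bounds $1\le m!\le m^m$ for $m=o(n)$, which contribute only subexponentially. The edge case $k=1$, where the value is $1$, will be checked directly.
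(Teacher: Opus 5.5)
Your proposal is correct and follows essentially the same route as the paper. Both use supermultiplicativity via the reversed concatenation, a bounded growth rate and Fekete's lemma for existence, the Stirling computation for shapes with converging column ratios (the paper's Lemma~\ref{lem:growth_rate_converging_shape_inc}), concavity to put the maximum at $a_i=1/k$, and the rectangular shapes for the lower bound.

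One small slip: the $i$th row of $\mu$ contributes the numerator $\prod_{j=1}^{\mu_i}(n+i-j)$, not $\prod_j(n-i+j)$. The latter is a rising factorial and is not within a polynomial factor of $(n)_{\mu_i}$. Your subsequent use of $(n)_{\mu_i}$ is the correct one.
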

Just as before in the increasing case the main steps of the proof are broken into separate technical lemmas. First, we establish that the enumeration is submultiplicative, which will show that the desired growth rate exists.

\begin{lemma}
	For a fixed $k$, \[\overline{w}_{n,k}\cdot \overline{w}_{m,k} \le \overline{w}_{n+m,k}\]
\end{lemma}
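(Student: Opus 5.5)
The plan is to mimic the proof of the analogous lemma for decreasing patterns, but with the blocks placed in the opposite relative order: instead of putting the smaller letters first, we shift the \emph{first} word upward so that every letter of the first block is strictly larger than every letter of the second block. Concretely, define $\iota:\overline{W}_{n,n}(1,2,\dots,k+1)\times\overline{W}_{m,m}(1,2,\dots,k+1)\to[n+m]^{n+m}$ by $(u,v)\mapsto w$ with
\[w_i=\begin{cases} u_i+m & \text{if } 1\le i\le n,\\ v_{i-n} & \text{if } n+1\le i\le n+m.\end{cases}\]
Since $u_i\in[n]$ and $v_j\in[m]$, every entry of $w$ lies in $[n+m]$, and every letter among the first $n$ positions (which lie in $[m+1,m+n]$) is strictly greater than every letter among the last $m$ positions (which lie in $[m]$).

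The first step is to record the reformulation we use for membership. Exactly as in Corollary~\ref{cor:inc_stand}, a word's standardization avoids $1,2,\dots,k+1$ if and only if the word has no weakly increasing subword of length $k+1$. This holds because standardization breaks ties left to right, so weakly increasing subwords of the word correspond exactly to increasing subsequences of $\std(w)$. Thus it suffices to show that $w=\iota(u,v)$ has no weakly increasing subword of length $k+1$.

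The second step is the block argument. Suppose a weakly increasing subword of $w$ uses a position $i\le n$ and a later position $j>n$. Then $w_i\ge m+1>w_j$, which contradicts weak increase. Hence any weakly increasing subword lies entirely within the first $n$ positions or entirely within the last $m$. In the first case, subtracting $m$ from each letter gives a weakly increasing subword of $u$ of the same length; in the second case it is already a weakly increasing subword of $v$. By hypothesis both have length at most $k$, so $w\in\overline{W}_{n+m,n+m}(1,2,\dots,k+1)$.

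Finally, $\iota$ is injective: we recover $u$ from the first $n$ letters of $w$ by subtracting $m$, and $v$ as the last $m$ letters. This gives $\overline{w}_{n,k}\cdot\overline{w}_{m,k}\le\overline{w}_{n+m,k}$. The only point needing care is the direction of the shift. The naive concatenation used in the decreasing case would create long weakly increasing subwords across the blocks, so we must place the larger letters first. Beyond that I do not expect any real obstacle.
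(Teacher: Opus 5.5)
Your proposal is correct and is essentially the paper's proof: the same map $\iota$ (first block $u_i+m$, followed by $v$), the same observation that every letter of the first block strictly exceeds every letter of the second so no weakly increasing subword can cross the blocks, and the same recovery of $u$ and $v$ for injectivity. Your explicit check that left-to-right tie-breaking makes weakly increasing subwords of $w$ correspond exactly to increasing subsequences of $\std(w)$ fills in a step the paper leaves implicit.
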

\begin{proof}
	To establish the result, we show that the natural map of $\iota: \overline{W}_n(1, 2, \dots, k+1)\times \overline{W}_m(1,2,\dots,k+1)\to [n+m]^{n+m}$ defined by $(u,v)\mapsto w$ where $w$ is the word on $[n+m]^{n+m}$ \[w_i=\begin{cases}
	m+u_i & \text{ if }1\le i \le n\\
	v_{i-n} & \text{ if } n+1\le i \le n+m
\end{cases}\]
is an injective map to $\overline{W}_{n+m}(1,\dots, k+1)$. First, it is immediate that $\iota(u,v)$ also avoids a weakly increasing subword of length $k+1$, as each of the first $n$ letters is strictly greater than each of the last $m$ letters and both $u$ and $v$ avoid weakly increasing subwords of length $k+1$. That $\iota$ is injective also follows immediately as we can recover $u$ by restricting to the first $n$ letters of $w$ and subtracting $m$ from each entry and $v$ by restricting to the last $m$ letters of $w$.
\end{proof}

Since words avoiding a weakly increasing subword of length $k+1$ are a subset of words avoiding a strictly increasing subword of length $k+1$, for which by Theorem~\ref{thm:decreasing_growth_rate} exists, then the potential growth rate of words avoiding a weakly increasing subword of length $k+1$ is bounded and thus exists by Fekete's Lemma. 
 The next lemma concerns the growth rate of $s_{\lambda^n}(1^n) f^{\lambda^n} $ for a family of partitions $\lambda^n \vdash n$ with $\lambda^n_1 \le k$ such that each of sequences of $\{\frac{(\lambda^n)'_i}{n}\}$ converges. In particular we care about such a growth rate as these will all be terms in $\overline{w}_{n,n}(1,2,\dots, k+1)$.

\begin{lemma}\label{lem:growth_rate_converging_shape_inc}
	Let $(\lambda^n)$ be a sequence of partitions such that $\lambda^n\vdash n$, ${\lambda^n}_1\le k$ and $\lim_{n\to \infty}\frac{(\lambda^n)'_i}{n}$ converges to $a_i$ for each $1\le i \le k$. Then \[\lim_{n\to\infty} (s_{\lambda^n}(1^n)f^\lambda)^{\frac{1}{n}} = \prod_{i=1}^k \frac{1}{(1-a_i)^{1-a_i}a_i^{2a_i}}\]
\end{lemma}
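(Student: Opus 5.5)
The plan is to mirror the proof of Lemma~\ref{lem:growth_rate_converging_shape}, with the roles of rows and columns exchanged. Write $c_i=(\lambda^n)'_i$ for the column lengths, so that $\sum_{i=1}^k c_i=n$ and $c_i/n\to a_i$; it follows that $\sum_i a_i=1$. We use the convention $0^0=1$. The strategy has three steps. First, replace the hook-length product $HL$ and the hook-content numerator $HC$ by simpler factorial expressions that differ from them only by polynomially bounded factors. Second, apply Stirling's approximation to the simplified expression. Third, take $n$-th roots. Polynomial factors have $n$-th root tending to $1$, so they do not affect the limit.

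\textbf{Hook lengths.} Since $\lambda^n_1\le k$, every cell $u$ has at most $k$ cells weakly to its right in its row. Its hook length is therefore at least the number of cells weakly below it in its column, and at most that number plus $k$. Multiplying column by column gives
\[\prod_{i=1}^k c_i!\;\le\; HL\;\le\;\prod_{i=1}^k \frac{(c_i+k)!}{k!}.\]
As in the previous lemma, this implies $1\le HL/\prod_i c_i!\le \binom{n+k}{k}^k$, so we may replace $HL$ by $\prod_i c_i!$ for growth-rate purposes.

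\textbf{Hook contents.} In column $i$ the cells have contents $i-r$ for $r=1,\dots,c_i$. The column therefore contributes
\[\prod_{r=1}^{c_i}(n+i-r)=\frac{(n+i-1)!}{(n+i-1-c_i)!}\]
to $HC$, which is well defined since $c_i\le n$. Shifting $n+i-1$ to $n$ changes this ratio by at most a factor polynomial in $n$ of degree bounded in terms of $k$, so we may use $\frac{n!}{(n-c_i)!}$ in place of the column-$i$ factor.

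\textbf{Simplified term and Stirling.} By Theorem~\ref{thm:hooklength} and Theorem~\ref{thm:hookcontent}, $s_{\lambda^n}(1^n)f^{\lambda^n}=n!\,HC/HL^2$. After the two replacements above, this agrees up to polynomial factors with
\[\frac{n!\,(n!)^k}{\prod_{i=1}^k (n-c_i)!\,(c_i!)^2}.\]
Apply Stirling's approximation in the bounded form $(m/e)^m\le m!\le e\,m\,(m/e)^m$ for $m\ge1$, together with $0!=1$, which covers the columns where $c_i$ or $n-c_i$ vanishes. Check the bookkeeping using $\sum_i c_i=n$. In the numerator the power of $n$ is $n^{(k+1)n}$. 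In the denominator it is $n^{\sum_i(n-c_i)+2\sum_i c_i}=n^{kn-n+2n}=n^{(k+1)n}$. The powers of $e$ cancel in the same way. What remains is
\[\prod_{i=1}^k\frac{1}{(1-c_i/n)^{n-c_i}(c_i/n)^{2c_i}}\cdot q(n),\]
where $q(n)$ is bounded above and below by polynomials. Taking $n$-th roots and using $c_i/n\to a_i$ yields $\prod_i \bigl((1-a_i)^{1-a_i}a_i^{2a_i}\bigr)^{-1}$. Here one uses that $x\mapsto x^x$ is continuous on $[0,1]$ with $0^0=1$.

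\textbf{Main obstacle.} The main difficulty is making the polynomial-error claims uniform when some $c_i$ is small, zero, or close to $n$. In those cases the ordinary asymptotic form of Stirling is not directly applicable. The plan is to use the two-sided explicit bound $(m/e)^m\le m!\le e\,m\,(m/e)^m$, valid for all $m\ge1$ and trivially extended to $m=0$. With this bound, every error factor lies between $1/\mathrm{poly}(n)$ and $\mathrm{poly}(n)$, which is all that is needed for the $n$-th root limit.
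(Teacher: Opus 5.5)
Your proposal is correct and follows the paper's proof essentially step for step. You sandwich the hook-length product between $\prod_i c_i!$ and $\prod_i (c_i+k)!/k!$, replace the hook-content numerator by $\prod_i n!/(n-c_i)!$, apply Stirling, and cancel the $n^{(k+1)n}$ and $e^{(k+1)n}$ factors using $\sum_i c_i=n$ before taking $n$-th roots. Your version is also slightly more careful than the paper's in two places: you write the exact column factor $(n+i-1)!/(n+i-1-c_i)!$, whereas the paper's $HC$ has $(n-c_i)!$ in the denominator, and you bound the error $q(n)$ by polynomials on both sides, which is what $q(n)^{1/n}\to 1$ actually requires.
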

\begin{proof}
To begin, fixing some $n$, as before let $HL = \prod_{u\in \lambda^n}h(u)$ denote the denominator of the hook length formula of $f^{\lambda^n}$. Note that for every cell $u$, the number of cells weakly to the right of $u$ is at most $k$ and at least 1, since $\lambda^n_1\le k$. So $\prod_{i=1}^k (\lambda^n)'_i !\le HL \le \prod_{i=1}^k \frac{((\lambda^n)'_i+k)!}{k!}$. Consequently $1\le \frac{HL}{\prod_{i=1}^k(\lambda^n)'{}_i!}\le \prod_{i=1}^k \binom{(\lambda^n)'_i+k}{k} <\binom{n+k}{k}^k$. Since $\binom{n+k}{k}$ is a polynomial of degree $k$, for the purposes of a growth rate, our calculation will remain unchanged by using $\prod_{i=1}^k (\lambda^n)'_i !$ in place of $HL$. By similar logic if $HC = \prod_{i=1}^k \frac{(n-1+i)!}{(n-(\lambda^n)'_i)!}$ is the numerator of the hook content formula we can replace $HC$ with $\prod_{i=1}^k \frac{n!}{(n-(\lambda^n)'_i)!}$ for the purposes of computing a growth rate.

 We now consider the growth rate of the sequence
 $\frac{(n!)^{k+1}}{\prod_{i=1}^k (n-(\lambda^n)'{}_i)!\prod_{i=1}^k ((\lambda^n)'{}_i!)^2}$. Applying Stirling's approximation, observe that 
 \[\frac{(n!)^{k+1} }{\prod_{i=1}^k (n-(\lambda^n)'_i)!\prod_{i=1}^k ((\lambda^n)'_i!)^2}\sim \frac{(\frac{n}{e})^{kn+n}}{\prod_{i=1}^k (\frac{n}{e})^{2(\lambda^n)'_i} (\frac{(\lambda^n)'_i}{n})^{2(\lambda^n)'_i}\prod_{i=1}^k (\frac{n}{e})^{n-(\lambda^n)'_i}(1-\frac{(\lambda^n)'_i}{n})^{n(1-\frac{(\lambda^n)'_i}{n})}}\cdot q(n)\] where $q(n)$ is a function bounded above by a polynomial from the non-exponential terms of Stirling's approximation. By rearranging terms and since $\lambda^n$ is a partition of $n$ this simplifies to 
 \[\frac{1}{\prod_{i=1}^k (1-\frac{(\lambda^n)'_i}{n})^{n(1-\frac{(\lambda^n)'_i}{n})}\prod_{i=1}^k(\frac{(\lambda^n)'_i}{n})^{2n\frac{(\lambda^n)'_i}{n}}}\cdot q(n).\] Finally \[\lim_{n\to\infty} \bigg(\frac{1}{\prod_{i=1}^k (1-\frac{(\lambda^n)'_i}{n})^{n(1-\frac{(\lambda^n)'_i}{n})}\prod_{i=1}^k(\frac{(\lambda^n)'_i}{n})^{2n\frac{(\lambda^n)'_i}{n}}}\cdot q(n)\bigg)^{\frac{1}{n}}=\frac{1}{\prod_{i=1}^k (1-a_i)^{1-a_i} a_i^{2a_i}} \] since $\lim_{n\to\infty} \frac{\lambda_i^n}{n} = a_i$ and $q(n)$ is bounded above by a polynomial.
\end{proof}

For the purposes of determining the growth rate for $\overline{w}_{n,k}$, and consequently the number of parking functions of length $n$ avoiding a weakly increasing subword of length $k+1$, the last ingredient will be to determine the maximal value of the quantity $\prod_{i=1}^k \frac{1}{(1-a_i)^{1-a_i}a_i^{2a_i}}$ where $0\le a_i \le a_{i-1}$ and $\sum_{i=1}^k a_i = 1$.

\begin{lemma}\label{lem:max_inc_sym}
	For $1\ge a_1 \ge \dots \ge a_k \ge 0$ where $\sum_{i=1}^k a_i=1$ the value of $\frac{1}{\prod_{i=1}^k (1-a_i)^{1-a_i} a_i^{2a_i}}$ is maximized when $a_i = \frac{1}{k}$ for all $i$.
\end{lemma}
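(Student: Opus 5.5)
Taking logarithms, maximizing the stated quantity is the same as maximizing
\[F(a_1,\dots,a_k)=\sum_{i=1}^k g(a_i),\qquad g(x)=-(1-x)\log(1-x)-2x\log x,\]
over the region $1\ge a_1\ge\dots\ge a_k\ge 0$ with $\sum_i a_i=1$. We use the conventions $0\log 0=0$ and $g(0)=g(1)=0$.

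\textbf{Step 1: concavity of $g$.} On $(0,1)$ we compute
\[g'(x)=\log(1-x)+1-2\log x-2,\qquad g''(x)=-\frac{1}{1-x}-\frac{2}{x}<0.\]
So $g$ is strictly concave on $(0,1)$. It is continuous on $[0,1]$, hence concave on the whole closed interval.

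\textbf{Step 2: averaging a pair.} Mirror the argument of Lemma~\ref{lem:max_dec_sym}. Suppose $a_j>a_{j+1}$, and replace both entries by their average $\frac{a_j+a_{j+1}}{2}$. This keeps the sum equal to $1$ and keeps the sequence weakly decreasing. Strict concavity gives
\[2g\!\left(\tfrac{a_j+a_{j+1}}{2}\right)>g(a_j)+g(a_{j+1}),\]
so $F$ strictly increases. Hence no maximizer can have two unequal entries.

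The region is compact and $F$ is continuous, so a maximizer exists. By the averaging step it must have all $a_i$ equal, which forces $a_i=\tfrac1k$.

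An alternative that avoids the existence argument is Jensen's inequality applied directly:
\[\sum_i g(a_i)\le k\,g\!\left(\tfrac1k\sum_i a_i\right)=k\,g(1/k).\]
This is immediate from concavity, and I would present it as the cleaner route.

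\textbf{Step 3: evaluate.} Exponentiating $k\,g(1/k)$ gives the maximum value
\[\left(\frac{k}{k-1}\right)^{k-1}k^{2}=\frac{k^{k+1}}{(k-1)^{k-1}},\]
which matches Theorem~\ref{thm:increasing_growth_rate}.

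\textbf{Main obstacle: the boundary.} The one point needing care is that $g$ is not differentiable at $x=0$ or $x=1$, where some $a_i$ may sit. I will handle this by noting that concavity on the closed interval $[0,1]$ follows from continuity together with concavity on the open interval, so Jensen's inequality applies on all of $[0,1]$. The degenerate case $k=1$ forces $a_1=1$ and is trivial; for it one reads the formula with the convention $0^0=1$.
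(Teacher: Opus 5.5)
Your proposal is correct and follows the paper's own argument: the paper also averages an adjacent pair $a_j>a_{j+1}$ and uses concavity of $(x-1)\log(1-x)-2x\log x$ to get a strict increase. Your compactness step (or the direct Jensen bound $\sum_i g(a_i)\le k\,g(1/k)$) supplies the existence of a maximizer that the paper's ``consequently'' tacitly assumes, so your version is, if anything, more complete.
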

\begin{proof}
	We show that if $a_j > a_{j+1}$ the new sequence $b_\ell$ where $b_\ell = a_\ell$ for $\ell\neq j, j+1$ and $b_\ell = \frac{a_j+a_{j+1}}{2}$ for $\ell = j,j+1$  then $\frac{1}{\prod_{i=1}^k (1-b_i)^{1-b_i} b_i^{2b_i}} > \frac{1}{\prod_{i=1}^k (1-a_i)^{1-a_i} a_i^{2a_i}}$. To do so we consider the difference of their logarithms. Note that this simplifies to $(a_{j}+a_{j+1}-2)\log(1-\frac{a_j+a_{j+1}}{2})-2(a_j+a_{j+1})\log(\frac{a_{j}+a_{j+1}}{2}) - ((a_j-1)\log(1-a_j) + (a_{j+1}-1)\log(1-a_{j+1}) - 2 a_j \log(a_j)-2a_{j+1}\log(a_{j+1})$ since $b_{j}=b_{j+1}= \frac{a_j+a_{j+1}}{2}$. This quantity is positive since the function $(x-1)\log(1-x)-2x\log(x)$ is concave down on the interval of $(0,1]$.
	Consequently the maximum value of this function is achieved when $a_i = a_j=\frac{1}{k}$ for all $i,j$.
	\end{proof}
We are now in a position to prove our claimed growth rate. 

\begin{proof}[Proof of Theorem~\ref{thm:increasing_growth_rate}]
	The proof is identical to that of Theorem~\ref{thm:decreasing_growth_rate}, except where Lemma~\ref{lem:growth_rate_converging_shape} and Lemma~\ref{lem:max_dec_sym} are replaced with Lemma~\ref{lem:growth_rate_converging_shape_inc} and Lemma~\ref{lem:max_inc_sym}, and where our lower bound comes from the rectangular partitions $k^n$.
		\end{proof}
\section{Non-Monotone Patterns of size $3$.}\label{sec:not_mono}
In this section we will enumerate the parking functions which avoid a single pattern of length 3 which is not $123$ or $321$. To do so, we will enumerate the equivalences classes of $\sylv$, and consequently of $\ssylv$, for words with a fixed content. We first answer the question of Wilf equivalence using the Adeniran--Pudwell definition of avoidance for parking functions.
\begin{proposition}\label{prop:wilfequivs}
For each $n$ we have the following equalities.
\begin{itemize}
	\item $\pf_n(132) = \pf_n(231)$
	\item $\pf_n(213) = \pf_n(312)$
\end{itemize} 
\end{proposition}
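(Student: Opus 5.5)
The plan is to reduce both equalities to a statement about words with a \emph{fixed content}, and then to exhibit, for each content, a pair of canonical representatives of the same congruence classes. First, by Proposition~\ref{prop:standardization}, $\pi(p)=\std(p)^{-1}$. Hence $p$ avoids $\sigma$ if and only if $\std(p)$ avoids $\sigma^{-1}$. Since $132^{-1}=132$, $231^{-1}=312$, $213^{-1}=213$ and $312^{-1}=231$, the claims become
\[
\#\{p: \std(p)\in\Avoid_n(132)\}=\#\{p:\std(p)\in\Avoid_n(312)\},
\qquad
\#\{p: \std(p)\in\Avoid_n(213)\}=\#\{p:\std(p)\in\Avoid_n(231)\}.
\]
Being a parking function depends only on the content, so it suffices to prove each equality separately for the set of words with a fixed content.

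Next I translate standardized avoidance into a pattern condition with ties. Standardization breaks ties left to right. So $\std(p)$ contains $132$ exactly when there are positions $i<j<k$ with values $p_i=a$, $p_j=c$, $p_k=b$ and $a\le b<c$. Likewise $\std(p)$ contains $312$ exactly when there are $i<j<k$ with $p_i=c$, $p_j=a$, $p_k=b$ and $a\le b<c$. In the same way, $213$ and $231$ become the patterns $b\,a\,c$ and $b\,c\,a$ (positions $i<j<k$) with $a<b\le c$. These are precisely the letter configurations in the definitions of $\sylv$ and $\ssylv$. For $\sylv$, the $a,c$ before the trailing $b$ are the ones that may be swapped. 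For $\ssylv$, the $a,c$ after the leading $b$ are the ones that may be swapped.

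The key step is to show that each $\sylv$ class of a fixed content contains exactly one word avoiding the ``$a\,c\cdots b$'' configuration and exactly one word avoiding the ``$c\,a\cdots b$'' configuration. Then both counts equal the number of Sylvester classes of that content, and the first equality follows. To do this I would use $u\sylv v\iff P(u)=P(v)$, and identify each class with the linear extensions of its binary search tree, read as words. Building $P(w)$ right to left, the last letter is the root; the letters of the left subtree ($\le$ root) and the right subtree ($>$ root) can be interleaved arbitrarily before it. A word avoids $a\,c\cdots b$ iff, at every node, all right-subtree letters come before all left-subtree letters. A word avoids $c\,a\cdots b$ iff, at every node, all left-subtree letters come before all right-subtree letters. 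For each, induction on the tree gives exactly one such extension: the two postorder-type readings of $P(w)$.

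The second equality is handled by the same argument with $\ssylv$ and its insertion tree. Alternatively, it can be obtained from the first by word reversal, which interchanges the two adjacency rules up to the placement of ties. The main obstacle I expect is the tie bookkeeping, i.e.\ checking that the weak/strict inequalities in the pattern conditions match exactly the left/right insertion rule ($\le$ goes left, $>$ goes right). A mismatch there would break uniqueness of the representative, so I would verify the induction carefully on repeated letters, for instance on the example $P(155432)$.
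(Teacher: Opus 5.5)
Your proposal is correct and follows the paper's route: for each fixed content, the words with $132$-avoiding and with $312$-avoiding standardization each form a system of representatives for the Sylvester classes (and likewise $213$/$231$ for the \#--Sylvester classes), and your two postorder readings of $P(w)$ supply the justification that the paper only sketches. One small correction: the symmetry relating the two equalities is reverse--complement (the map $rc$ used just before Corollary~\ref{cor:sharpsylvenum}), not plain reversal, which mismatches the ties; your primary route via a left-to-right \#--Sylvester insertion tree ($<$ goes left, $\ge$ goes right, with the two preorder readings as the unique avoiders) works as stated.
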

\begin{proof}
	We prove only the first equality because the argument for the second is identical but with the \#--Sylvester congruence instead. First observe that the equivalence classes of the Sylvester congruence can be represented by words whose standardizations avoid $132$. Note that the process for turning a word $w$ into its representative does so by converting instances of 132 in $\std(w)$ into occurrences of 312 in $\std(w')$ where $w'$ is obtaine by a Sylvester adjacency. So the number of words $w$ with $\std(w)$ avoiding 132 and avoiding $312$ are equal. Since a parking function $p$ avoids $132$, or 312, in the Adeniran and Pudwell sense if and only if $\std(p)^{-1}$ avoids 132, or 231, by Proposition~\ref{prop:standardization} our claim then follows.
	\end{proof}
	The remainder of this section is dedicated to proving the following via purely combinatorial methods. 
	\begin{theorem}\label{thm:enum_of_132_words}
The number of Sylvester classes of words whose packed content is $\alpha\vDash n $ with $|\alpha| = k$ is \[\det\bigg[\binom{\displaystyle j+\sum_{\ell = 1}^i\alpha_{k+1-\ell}}{j-(i-1)}\bigg]_{1\le i,j\le k-1}.\]	
\end{theorem}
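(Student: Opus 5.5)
Sylvester classes with content $\alpha$ (so $\alpha$ has $k$ parts, letters $1,\dots,k$ with $\alpha_i$ copies of $i$) are in bijection with the possible binary search trees $P(w)$. Equivalently, each class has a unique representative whose standardization avoids $132$, as noted in the proof of Proposition~\ref{prop:wilfequivs}. The plan is to turn these representatives into non-intersecting lattice paths and then apply the Lindstr\"om--Gessel--Viennot lemma, which yields a $(k-1)\times(k-1)$ determinant of binomial coefficients.

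\textbf{Step 1 (characterize the trees).} Inserting right to left with equal letters going left, the copies of a letter $a$ form a left-going chain. Such a tree is determined by the shape of how distinct values nest. Concretely, I would use the canonical word $w$ obtained by reading the tree in postorder (left subtree, right subtree, root). This word is the unique $\std$-$132$-avoiding representative, since postorder of a binary search tree inserts back to the same tree. I would then describe the class by the positions of the letters.

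\textbf{Step 2 (encode as paths).} Process the letters from the largest value $k$ down to $1$, which explains the sums $\alpha_k+\alpha_{k-1}+\cdots+\alpha_{k+1-i}$ in the statement. I expect that, for each $i\le k-1$, the interaction between the block of values $\{k+1-i,\dots,k\}$ and the value $k-i$ is recorded by a lattice path. The binomial $\binom{j+S_i}{j-(i-1)}$ with $S_i=\sum_{\ell\le i}\alpha_{k+1-\ell}$ should count paths from a source $A_i$ to a sink $B_j$ in $\mathbb{Z}^2$, with $j-(i-1)$ steps of one type and $S_i+i-1$ of the other. So I would take $A_i=(i-1,-S_i-i+1)$ (or similar) and $B_j=(j,0)$, and interpret the steps as follows: a 132-avoiding word with letters from the top $i$ values is a sequence in which the smaller letter can be inserted only at certain slots. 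I would check the small cases $k=2$ and $k=3$ by hand to pin down the endpoints. For $k=2$ the determinant is $\binom{1+\alpha_2}{1}=\alpha_2+1$, which matches the count: a $132$-avoiding arrangement of $\alpha_1$ ones and $\alpha_2$ twos is determined by how many $2$'s precede the block of ones.

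\textbf{Step 3 (non-intersection and LGV).} Show that the map from classes to families of paths is a bijection onto non-intersecting families $(A_i\to B_i)$. Then show that the sources and sinks are compatible, i.e.\ any non-intersecting family must connect $A_i$ to $B_i$, so that LGV gives exactly the stated determinant with no signed cancellation issues. The non-intersection condition should encode the nesting constraint that a smaller value inserted later must lie consistently relative to earlier insertion points, i.e.\ the binary-search-tree condition.

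The main obstacle is Step 2: finding the exact path model whose endpoints reproduce the binomial $\binom{j+S_i}{j-i+1}$, and proving that the BST/132-avoidance constraints correspond precisely to vertex-disjointness. I would first derive a recursive description, removing the smallest letter $1$ from a tree and recording where its $\alpha_1$ copies hang along the leftmost spine and in the gaps. I would then read off the path steps from that recursion and verify the result against $k=3$ before generalizing.
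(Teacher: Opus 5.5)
Your proposal has a genuine gap: Steps 2 and 3 are the whole proof, and neither is supplied. You chose $A_i$ and $B_j$ so that $\binom{j+S_i}{j-i+1}$ counts single paths $A_i\to B_j$. That only makes the matrix entries match. The determinant then counts non-intersecting families, and nothing in the proposal relates those families to Sylvester classes.

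It helps to see what the families are. Since $B_i$ lies one column east of $A_i$, each path $A_i\to B_i$ has exactly one east step, say at height $-(c_i+i-1)$. Vertex-disjointness of consecutive paths along the column $x=i$ is then exactly $c_i\le c_{i+1}$. So the theorem is equivalent to a bijection between Sylvester classes of content $\alpha$ and weakly increasing sequences $0\le c_1\le\cdots\le c_{k-1}$ with $c_i\le S_i$. That bijection is precisely what ``I expect'' and ``should encode the nesting constraint'' leave open. Your endpoints are in fact the paper's $s_i=(S_i+i-1,\,i-1)$, $t_j=(0,j)$ under the rotation $(x,y)\mapsto(y,-x)$. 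So the lattice model is the right one; what is missing is its combinatorial meaning and the proof that it encodes the classes.

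The paper supplies this by a chain of reductions rather than by analysing trees directly:
\begin{enumerate}
\item Standardizations of words of content $\alpha$ are exactly the permutations with $\iDes\subseteq\{\alpha_1,\alpha_1+\alpha_2,\dots,\alpha_1+\cdots+\alpha_{k-1}\}$. Since $\std$ respects $\sylv$, classes correspond to $132$-avoiders with that inverse-descent condition.
\item Inverting, which preserves $132$-avoidance, turns this into a descent condition.
\item The Rothe-diagram Dyck path of a $132$-avoider has its valleys at the descents. So one counts Dyck paths whose descent composition is coarser than $\alpha$.
\item Reversing the path turns this into ascent composition coarser than $\alpha^r$.
\item Inserting a peak after each partial sum of $\alpha^r$ converts this into ascent composition exactly $\beta=(\alpha_k+1,\dots,\alpha_2+1,\alpha_1)$.
\end{enumerate}
For such a path with descent composition $\delta$, $c_i=\delta_1+\cdots+\delta_i-i$ is exactly your sequence. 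Positivity of $\delta_{i+1}$ gives $c_i\le c_{i+1}$, and the Dyck condition gives $c_i\le S_i$. This is also where the reversed indices $\alpha_{k+1-\ell}$ come from (inversion and path reversal), rather than from processing values from the top down. If you keep your direct approach, you must produce an explicit map from class representatives to such sequences and prove it bijective; checking $k=3$ by hand cannot replace that. Your $k=2$ check is correct.

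A smaller correction to Step 1. The postorder reading (left subtree, right subtree, root) does give one representative per class, but its standardization avoids $312$, not $132$. The paper's $155432$ is the postorder reading of its tree, and $\std(155432)=156432$ contains $1\,5\,4$. The $132$-avoiding representative $554312$ is the reading right subtree, left subtree, root. This does not change the count, but any encoding you build on $132$-avoidance must use the right reading.
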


As a consequence we will derive an enumeration for the number of Sylvester classes and \#--Sylvester classes for any family of words provided the set of contents is known. In particular we can derive the following theorem and provide an answer to the last remaining question of Adeniran and Pudwell. 

\begin{theorem}\label{thm:132and213enum}
The enumeration of parking functions which avoid a single non-monotone pattern in the Adeniran--Pudwell sense is given by
	\[\pf_n(132) =\pf_n(231) =\sum_{\alpha \vDash n} \det\bigg[\binom{\displaystyle j-i+\sum_{\ell = 1}^i\alpha_{\ell}}{j-(i-1)}\bigg]\det\bigg[\binom{\displaystyle j+\sum_{\ell = 1}^i\alpha_{k+1-\ell}}{j-(i-1)}\bigg]\]
		\[\pf_n(213)=\pf_n(312)=\sum_{\alpha \vDash n}\det\bigg[\binom{\displaystyle j-i+\sum_{\ell = 1}^i\alpha_{\ell}}{j-(i-1)}\bigg]\det\bigg[\binom{\displaystyle j+\sum_{\ell = 1}^i\alpha_{\ell}}{j-(i-1)}\bigg]\]
		where in all the above determinants the indices are from $1\le i,j \le k-1$ where $|\alpha|=k$.
\end{theorem}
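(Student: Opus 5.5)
The plan is to reduce the count of pattern-avoiding parking functions to a sum over contents. For each content we count the words whose standardization avoids the pattern, and separately count how many parking-function contents share a given packed content. Throughout I take $k$ to be the number of parts of $\alpha$, so that $\sum_{i=1}^k\alpha_i = n$ (the statement writes $|\alpha| = k$ for the number of parts).

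\textbf{Step 1 (translation to words).} By Proposition~\ref{prop:standardization}, $p$ avoids $132$ in the Adeniran--Pudwell sense if and only if $\std(p)^{-1}$ avoids $132$. Since $132$ is an involution, this holds if and only if $\std(p)$ avoids $132$. Likewise $p$ avoids $213$ if and only if $\std(p)$ avoids $213$. The equalities $\pf_n(132)=\pf_n(231)$ and $\pf_n(213)=\pf_n(312)$ are Proposition~\ref{prop:wilfequivs}, so it suffices to treat $132$ and $213$.

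Partition $\PF_n$ by content $c=(c_1,\dots,c_n)$, where $c_s=|p^{-1}(s)|$. For a fixed content, the words with that content whose standardization avoids $132$ are exactly the canonical representatives of the Sylvester classes with that content, as used in the proof of Proposition~\ref{prop:wilfequivs}. The number of Sylvester classes depends only on the packed content $\alpha$, the sequence of nonzero $c_s$ in order, since relabeling the occupied values order-preservingly commutes with $\sylv$. So Theorem~\ref{thm:enum_of_132_words} gives the second determinant for $132$.

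For $213$, I apply reverse-complement. It sends $\ssylv$-adjacencies to $\sylv$-adjacencies (check directly on the defining relations: $b\,v\,ac$ becomes $c'a'\,v'\,b'$ with the inequalities flipped appropriately). It also sends $213$-avoiding standardizations to $132$-avoiding ones, and reverses the packed content. Hence the number of $\#$--Sylvester classes with packed content $\alpha$ is Theorem~\ref{thm:enum_of_132_words} applied to $(\alpha_k,\dots,\alpha_1)$. This replaces $\alpha_{k+1-\ell}$ by $\alpha_\ell$, giving the second determinant in the $213$ formula.

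\textbf{Step 2 (counting parking contents with packed content $\alpha$).} A content with packed content $\alpha$ is determined by the occupied spots $s_1<\dots<s_k$ in $[n]$. The parking condition $|p^{-1}([i])|\ge i$ is equivalent to $s_1=1$ and $s_{i+1}\le 1+A_i$ for $1\le i\le k-1$, where $A_i=\alpha_1+\dots+\alpha_i$. The bound $s_k\le n$ is then automatic because $A_{k-1}\le n-1$. So we must count strictly increasing sequences $2\le t_1<\dots<t_{k-1}$ with $t_i\le 1+A_i$. I claim this equals $\det\big[\binom{j-i+A_i}{j-i+1}\big]_{1\le i,j\le k-1}$.

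I would prove this claim by the standard determinant for monotone sequences with prescribed upper bounds. Concretely, I would use Lindström--Gessel--Viennot on nonintersecting lattice paths, after shifting $u_i=t_i-i$ to obtain weakly increasing sequences with bounds $u_i\le A_i+1-i$. The alternative is induction on $k$ via expansion along the last row, using the recursion obtained by fixing $t_{k-1}$ and the Pascal identity. Checking the small cases $k=2,3$ against the matrix entries fixes the indexing conventions.

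\textbf{Step 3 (assembly).} Summing over $\alpha\vDash n$ gives the claimed formulas: the number of parking contents with packed content $\alpha$ (the first determinant) times the number of classes per content (the second determinant). I expect the main obstacle to be Step 2. The difficulty is getting the exact binomial entries $\binom{j-i+A_i}{j-i+1}$ to match a lattice-path model, in particular choosing source and sink points so that the path count from source $i$ to sink $j$ is that binomial. The convention that lower-triangular entries with negative bottom vanish must also be handled correctly. A secondary point of care is verifying that reverse-complement really intertwines $\sylv$ and $\ssylv$ with the content reversed.
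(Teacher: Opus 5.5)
Your proposal is correct and follows essentially the paper's route. You decompose by content, count the $132$-avoiding (equivalently, Sylvester) representatives per content via Theorem~\ref{thm:enum_of_132_words}, handle $213$ by reverse-complement as in Corollary~\ref{cor:sharpsylvenum}, and multiply by the number of parking contents with packed content $\alpha$. The determinant identity you flag as the main obstacle in Step 2 is exactly Lemma~\ref{lem:ascent_comp_count}: parking contents with packed content $\alpha$ are the Dyck paths with ascent composition $\alpha$, and your $t_i-2$ are precisely the $x$-coordinates of the unique vertical steps in that lemma's nonintersecting path families, so that step is already done in the paper.
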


To prove Theorem~\ref{thm:enum_of_132_words}, from which Theorem~\ref{thm:132and213enum} follows, we will first reduce the enumeration to an equivalent question about enumerating 132 avoiding permutations by their inverse descent sets. By then using the Dyck path arising from the Rothe diagram, we will further be able to reduce our question to the problem of enumerating Dyck paths with a fixed ascent composition. Starting off, consider all words standardized content $\alpha = (\alpha_1,\alpha_2, \dots, \alpha_k)$. Denote the set of all such words by $W_\alpha$. For any $\pi = \std(w)$, where $w\in W_\alpha$, we claim that $\iDes(\pi) \subseteq \{\alpha_1, \alpha_1+\alpha_2, \dots, \sum_{i=1}^{k-1}\alpha_i\} = \alpha(S).$ To see why, notice that $\displaystyle \pi\in \overset{k}{\underset{i=1}{\shuffle}}12\dots \alpha_i$, so the only inverse descents of $\pi$ can occur when the first $i+1$ occurs before the last $i$ in $w$. As an immediate consequence of this observation, since the number of equivalence classes under the Sylvester congruence is the number of 132 avoiding permutations contained within this shuffle, it suffices to count the number of 132 avoiding permutations whose inverse descent set is contained within $\alpha(S)$. To do so, we will utilize the \emph{ascent composition} as well as the \emph{descent composition} of a Dyck path. 

\begin{definition}
	Let $\mathcal{D}$ be a Dyck path of semilength $n$ equaling $U^{\alpha_1}D^{\delta_1}U^{\alpha_2}D^{\delta_2}\dots U^{\alpha_k}D^{\delta_k}$. The \emph{ascent composition} $\alpha(\mathcal{D})=(\alpha_1,\alpha_2,\dots, \alpha_k)$ is the composition of $n$ recording the lengths of consecutive sequences of up steps in $\mathcal{D}$. Similarly we define the \emph{descent composition} of $\D$ to be $\delta(\D) = (\delta_1,\delta_2,\dots, \delta_k)$.
\end{definition}
\begin{example}

Below is a Dyck path $\D$, drawn as a lattice path with steps of the form $(1,1)$ and $(1,-1)$ from $(0,0)$ to $(34,0)$ that never passes below the line $y=0$. For $\D$ one can observe that $\alpha(\D)=(2,2,3,1,3,3,3)$ and $\delta(\D)=(1,2,1,4,2,1,6)$.
	\center
\newcommand\start{circle(.05)}
 \newcommand\up{-- ++(a) \start}
 \newcommand\dn{-- ++(b) \start}
 \begin{tikzpicture}[scale = .8]
 	\coordinate(a) at (.5,.5);
    \coordinate(b) at (.5,-.5);
     \filldraw[blue] (0,0)\start\up\up\dn\up\up\dn\dn\up\up\up\dn\up\dn\dn\dn\dn\up\up\up\dn\dn\up\up\up\dn\up\up\up\dn\dn\dn\dn\dn\dn;
     
			\node (Label) at (-1,1) {$\D=$};
 \end{tikzpicture}
\end{example}
With the prior definition in hand, we can begin to enumerate the number of Sylvester classes of words with a fixed content. Before doing so, we first make the following observation which is formalized in Lemma~\ref{lem:DescentsToAscents}. For $\pi \in \Avoid_n(132)$, consider the Dyck path $\mathcal{D}(\pi)$ corresponding to the boundary of the Rothe diagram of $\pi$. By considering $\mathcal{D}(\pi)$ as a path beginning at $(0,n)$ and ending at $(n,0)$, notice that when $\D(\pi)$ has a valley, that is a pair of consecutive steps of the form $(-1,0),(0,-1)$, then the number of weakly preceding steps of the form $(-1,0)$ corresponds to the position of a descent of $\pi$. This is because of the fact that $\pi$ avoids $132$, so that if there is a new shaded cell in the Rothe diagram of $\pi$ in column $i$ then $\pi(i) > \pi(i+1)$. In particular this also means that the cell of $(i+1,\pi(i+1))$ is a valley. As a consequence we have the following lemma.

\begin{lemma}\label{lem:DescentsToAscents}
	 For any $S\subseteq [n-1]$, we have that \[\#\{\pi \in \Avoid_n(132) | \Des(\pi) = S\} = \#\{\D\in \D_n | \delta(\D) = \alpha(S)\}.\]
\end{lemma}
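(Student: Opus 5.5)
The plan is to show that $\pi\mapsto\D(\pi)$ is a bijection from $\Avoid_n(132)$ to $\D_n$, and that the descents of $\pi$ can be read off the path so that $\Des(\pi)=S$ corresponds exactly to $\delta(\D(\pi))=\alpha(S)$. I will work with the Lehmer code $c(\pi)=(c_1,\dots,c_n)$, where $c_i=\#\{j>i:\pi_j<\pi_i\}$. This is the number of shaded cells in column $i$ of the Rothe diagram.

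\textbf{Step 1 (the diagram is a staircase-bounded partition).} I will first show that if $\pi$ avoids $132$, then the shaded cells in each column form an initial segment starting at the bottom row, and $c_1\ge c_2\ge\cdots\ge c_n$.
\begin{itemize}
\item Suppose column $i$ has a shaded cell at height $\pi_j$ (so $j>i$ and $\pi_j<\pi_i$) but an unshaded cell at a lower height $h<\pi_j$. Unshaded means either $h=\pi_m$ with $m<i$, or $h=\pi_m$ with $m>i$ and $\pi_m>\pi_i$. The second option is impossible since $h<\pi_j<\pi_i$. In the first option, $\pi_m<\pi_j<\pi_i$ with $m<i<j$, which is a $132$ pattern.
\item Similarly, if $c_{i+1}>c_i$, then some value below $\pi_{i+1}$ that appears after position $i+1$ is not below $\pi_i$. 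This forces $\pi_i<\pi_j<\pi_{i+1}$ for some $j>i+1$, again a $132$.
\end{itemize}
Conversely, a weakly decreasing code with $c_i\le n-i$ determines a unique permutation, as every Lehmer code does, and that permutation avoids $132$. This is the standard characterization of $132$-avoiders as dominant permutations. Hence the boundary path $\D(\pi)$ from $(0,n)$ to $(n,0)$ stays on the correct side of the staircase, and $\pi\mapsto\D(\pi)$ is a bijection $\Avoid_n(132)\to\D_n$. I would cite this fact and sketch only the converse.

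\textbf{Step 2 (descents are strict drops of the code).} For any permutation, $i\in\Des(\pi)$ if and only if $c_i>c_{i+1}$. Indeed, if $\pi_i>\pi_{i+1}$ then $\pi_{i+1}$ is counted in $c_i$ in addition to everything counted in $c_{i+1}$. If $\pi_i<\pi_{i+1}$, every element counted in $c_i$ is also counted in $c_{i+1}$. So the descent set is the set of positions where the column heights of the partition strictly drop. Each such position is a corner of the diagram, matching the valley observation made before the statement.

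\textbf{Step 3 (translation to descent composition).} Cutting $[n]$ at the positions of $S$ gives maximal blocks of consecutive columns of equal height, and the block sizes are exactly the parts of $\alpha(S)$. In the path $\D(\pi)$, each block becomes a maximal horizontal run of that length. Consecutive runs are separated by at least one vertical step, because the heights strictly drop between blocks. Reading the path in the Dyck convention, with horizontal steps as down steps, the maximal down-runs therefore have lengths $\alpha(S)$, that is, $\delta(\D(\pi))=\alpha(S)$. Combined with the bijection of Step 1, this gives the claimed equality of counts.

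The main obstacle is bookkeeping at the ends of the path. The final block of columns includes column $n$ and has height $c_n=0$. I must check that the boundary path, with the closing segment along the bottom edge included, gives a final down-run of exactly the right length, and that the orientation and reversal conventions make the composition come out as $\alpha(S)$ rather than its reverse. I would fix these conventions using a small example such as $\pi=231$ before writing the general argument.
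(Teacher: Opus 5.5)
Your proposal is correct and takes the same route as the paper. You identify $\Avoid_n(132)$ with $\D_n$ through the boundary of the Rothe diagram and note that descents of $\pi$ are exactly where the column heights drop, i.e.\ the valleys of $\D(\pi)$; your Lehmer-code argument supplies the details the paper leaves to a brief remark (bottom-justified, weakly decreasing columns, and $i\in\Des(\pi)\iff c_i>c_{i+1}$), and the convention you flag comes out as in the paper's example: reading the path from $(0,n)$ to $(n,0)$ with vertical steps as $U$ gives $\delta(\D(\pi))=\alpha(S)$ with no reversal (e.g.\ $\pi=231$ has code $(1,1,0)$ and path $UUDDUD$).
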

\newcommand\start{circle(.05)}
 \newcommand\up{-- ++(a) \start}
 \newcommand\dn{-- ++(b) \start}
\begin{example}\label{ex:DyckPathAscentSequenceDescents} 
Below is the 132 avoiding permutation $\pi =345637812$ together with the associated Dyck path $\D(\pi) = UUUUUDDDUDDDUUDD$. Note that $\Des(\pi) = \{3,6\}$and $\alpha(\{3,6\}) = (3,3,2)$. Similarly, observe that $\delta(\D(\pi)) = (3,3,2)$.
\begin{center}
	\begin{tikzpicture}[scale = .9]
	  \coordinate(a) at (-1,0);
    \coordinate(b) at (0,1);
		\lgpermc{{4,5,6,3,7,8,1,2}}{8}
		  \filldraw[blue, thick] (8,0)\start\up\up\dn\dn\up\up\up\dn\up\up\up\dn\dn\dn\dn\dn;
	\end{tikzpicture}
\end{center}
	\end{example}
Thus, it will suffice to enumerate Dyck paths by their descent composition. It turns out to be easier to do so by their ascent composition. In particular, since if $\pi$ avoids $132$ then so does $\pi^{-1}$ then one can notice that $\D(\pi^{-1})$ is the Dyck path such that $\alpha(\D(\pi^{-1})) = \alpha(\D(\pi))^r$, where $\alpha^r$ denotes the reversal of a composition, since taking the inverse of $\pi$ corresponds to reflecting the Rothe diagram about the line $y=x$.

\begin{lemma}\label{lem:ascent_comp_count}
	Let $\alpha \vDash n$ and suppose $|\alpha|=k$. The number of Dyck paths with ascent composition $\alpha$ is 
		\[\det\bigg[\binom{j-i+\sum_{\ell = 1}^i\alpha_{\ell}}{j-(i-1)}\bigg]_{1\le i,j\le k-1}\]
\end{lemma}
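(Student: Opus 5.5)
We reduce the count to counting integer sequences under upper bounds, and then evaluate that count with a Hessenberg determinant obtained from inclusion--exclusion.

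\textbf{Step 1: reduction to bounded sequences.} A Dyck path with ascent composition $\alpha=(\alpha_1,\dots,\alpha_k)$ has the form $U^{\alpha_1}D^{\delta_1}\cdots U^{\alpha_k}D^{\delta_k}$ with every $\delta_t\ge 1$. Write $s_i=\sum_{\ell\le i}\alpha_\ell$ and $d_i=\sum_{\ell\le i}\delta_\ell$. The path stays weakly above the axis exactly when $d_i\le s_i$ for all $i$, and it ends on the axis exactly when $d_k=n$. Since $d_k=n$ is then forced, the Dyck paths in question are in bijection with strictly increasing sequences
\[1\le d_1<d_2<\cdots<d_{k-1},\qquad d_i\le s_i\ (1\le i\le k-1).\]
So it suffices to show that the number $N_{k-1}$ of such sequences equals the stated determinant. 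As sanity checks: for $k=2$ the count is $s_1=\alpha_1=\binom{s_1}{1}$, and for $\alpha=(1,1,1)$ the determinant is $1\cdot 2-1\cdot 1=1$, the single path $UDUDUD$.

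\textbf{Step 2: a recurrence for $N_m$.} Let $N_m$ be the number of sequences $1\le d_1<\cdots<d_m$ with $d_i\le s_i$ for $i\le m$, and set $N_0=1$. I would derive
\[N_m=\sum_{r=0}^{m-1}(-1)^{m-1-r}\,N_r\,c_{r+1,m},\qquad c_{i,j}=\binom{j-i+s_i}{j-i+1}.\]
The entry $c_{i,j}$ should be the number of length-$(j-i+1)$ tails starting at position $i$ in which only the weakest relevant bound $s_i$ is imposed. One checks that $\binom{j-i+s_i}{j-i+1}$ counts multisets of size $j-i+1$ from a set of $s_i$ elements, which is what such a tail becomes after subtracting consecutive integers. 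I would prove the recurrence by an explicit sign-reversing involution. The objects are pairs consisting of a valid prefix of length $r$ together with a relaxed tail from position $r+1$ to $m$, weighted by $(-1)^{m-1-r}$. The involution locates the first index $t>r$ at which the tail either violates $d_t\le s_t$ or fails to continue the prefix strictly, and moves the cut point $r$ across that index. The fixed points are exactly the fully valid sequences. Concretely, I expect the involution to toggle between gluing the block $d_{r+1}$ onto the prefix and splitting it off.

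\textbf{Step 3: identification with the determinant.} The matrix $[c_{i,j}]_{1\le i,j\le k-1}$ is lower Hessenberg: $c_{i,j}=0$ when $j<i-1$, and $c_{i,i-1}=\binom{s_i-1}{0}=1$ on the subdiagonal. Expanding a Hessenberg determinant along its last column gives exactly the recurrence of Step~2, with the same signs and the same initial value $1$. Induction on $m$ then shows that $N_m$ equals the leading $m\times m$ minor, and taking $m=k-1$ proves the lemma.

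\textbf{Main obstacle.} The hard part is Step~2: getting the shifts right so that the relaxed tail is counted by exactly $\binom{j-i+s_i}{j-i+1}$ rather than an off-by-one variant such as $\binom{j-2i+1+s_i}{j-i+1}$. The index bookkeeping is delicate, since an off-by-one version of this formula already fails on $\alpha=(1,1,1)$. If the involution becomes messy, my fallback is Lindström--Gessel--Viennot. I would encode the sequence as $k-1$ nonintersecting lattice paths, where path $i$ runs from a source determined by $i$ to a sink determined by $s_i$, and read the determinant entries off as single-path counts. I would check the entries against small cases such as $\alpha=(1,1,1)$ and $\alpha=(2,1,1)$ before finalizing the indexing.
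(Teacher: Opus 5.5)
Your Steps 1 and 3 are correct. The Dyck paths are exactly the sequences $1\le d_1<\cdots<d_{k-1}$ with $d_i\le s_i$, which is also what the paper's bijection produces. Expanding the matrix along its last column gives precisely your recurrence (it is \emph{upper} Hessenberg, not lower, but that is immaterial).

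The gap is Step~2, which carries all the content of the lemma and which you leave as an expectation. As described, it does not work.
\begin{itemize}
\item The rule ``move the cut across the first bad index $t$'' can move the cut by an even number of positions, or by zero. So it is not a well-defined sign-reversing involution.
\item Your own description of the tails (``after subtracting consecutive integers'') points to strictly increasing tails $1\le d_i<\cdots<d_j$ with $d_t\le s_i+t-i$. With those, gluing and splitting cannot match terms. Already for $m=2$, the level-$1$ objects $(d_1;d_2)$ with $d_2\le d_1$ must cancel against level-$0$ tails $1\le e_1<e_2\le s_1+1$. Splitting $d_1$ off yields a non-increasing pair, so there is no partner. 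Only an ad hoc shift such as $(d_1,d_2)\mapsto(d_2,d_1+1)$ works, and it is unclear how such shifts extend to larger $m$.
\end{itemize}

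The missing idea is to take the tails \emph{weakly decreasing}, bounded only through their first entry. For $0\le r\le m-1$, let $\mathcal{O}_r$ be the set of pairs consisting of a valid prefix $d_1<\cdots<d_r$ and a tail $s_{r+1}\ge d_{r+1}\ge\cdots\ge d_m\ge 1$. Then $|\mathcal{O}_r|=N_r\,c_{r+1,m}$, and $\mathcal{O}_r$ carries sign $(-1)^{m-1-r}$. Define two moves:
\begin{itemize}
\item \emph{Push:} if $r\ge 1$ and $d_{r+1}\le d_r$, move $d_r$ into the tail. This is legal since $d_r\le s_r$.
\item \emph{Pull:} otherwise, if $r\le m-2$, move $d_{r+1}$ into the prefix. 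This is legal since $d_{r+1}>d_r$ and $d_{r+1}\le s_{r+1}$, and the new tail satisfies $d_{r+2}\le d_{r+1}\le s_{r+1}\le s_{r+2}$.
\end{itemize}
Each move lands in the domain of the other, so they are mutually inverse and sign-reversing. The unmatched objects are those with $r=m-1$ and $d_m>d_{m-1}$ (vacuous if $m=1$). These are exactly the valid sequences, each with sign $+1$.

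With this repair your route is a complete alternative to the paper's proof that avoids LGV, and it needs only that the $s_i$ be weakly increasing. The paper instead applies Lindstr\"om--Gessel--Viennot with sources $(s_i-1,i-1)$, sinks $(0,j)$, and north/west steps. Single-path counts are then exactly $c_{i,j}$. Nonintersecting families must pair $i$ with $i$, so each path has a single north step at $x=d_i-1$, and disjointness in row $i$ is $d_i<d_{i+1}$. That is precisely your fallback, and it sidesteps the index bookkeeping you were worried about.
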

\begin{proof}
	To prove this, we will make use of the classic result of Lindstr\"{o}m--Gessel--Vienot. Let $G$ be the directed graph whose vertex set is $\mathbb{N}\times \mathbb{N}$ with edges directed from $(x,y)\to (x,y+1)$ and $(x,y)\to (x-1,y)$. For a composition $\alpha \vDash n$ with $|\alpha| = k$ let $s_i = (-1+\sum_{\ell=1}^i \alpha_i,i-1)$ and let $t_j = (0,j)$, where $1\le i,j\le k-1$. We claim that the number of Dyck paths with ascent composition $\alpha$ equals the number of families of nonintersecting paths of $G$ directed from $S = \{s_i |i\in [k-1]\}$ to $T =\{t_j | j\in [k-1]\}$. Specifically, we will exhibit an explicit bijection between the two sets of objects. The first thing to observe is that for a family of non-intersecting lattices path from $S$ to $T$ the paths beginning at $s_i$ must terminate at $t_i$. To see why let $P_i$ denote the path that has initial vertex $s_i$. If there is some $i$ where $P_i$ does not terminate at $t_i$ then without loss of generality we can assume that $i$ is minimal. So suppose that the path $P_i$ terminates at $t_j$ with $i\neq j$. Since $i$ is minimal, then $j > i$ meaning there must exist some $i' > i$, in fact $i' = i+1$, such that $P_{i'}$ ends at $t_i$. Because $P_i$ begins below $P_{i'}$ and ends above $P_{i'}$, there must be some vertex where $P_i$ and $P_{i'}$ intersect as they must cross. As a consequence of this, each $P_i$ must contain exactly one edge of the form $(x,y) \to (x,y+1)$.
	
	We now construct a bijection between families of non-intersecting lattice paths from $S$ to $T$ and Dyck paths with ascent composition $\alpha$. To do so, for $i \le k-1$ let $\delta'_i$ equal $(x_i-x_{i-1})$ where in $P_i$ the unique step of the form $(x,y)\to (x,y+1)$ corresponds to the edge $(x_i,i-1)\to (x_i,i)$ and where we take $x_0=-1$. Now let $\delta'(L) = \D'$ be the Dyck path with $\alpha(\D') = \alpha$ and $\delta(\D') = (\delta'_1,\delta'_2,\dots, \delta'_{k-1}, n-\sum_{\ell=1}^{k-1} \delta'_{\ell})$. Our claim is that $\delta'(L)$ is not only well defined but a bijection. 
	
	  We will show $\delta'(L)$ is well defined by demonstrating that for each $i\le k$ that $\sum_{\ell = 1}^i \delta'_i < \sum_{\ell = 1}^i \alpha_i$. Observe for $i\le k-1$ that $\sum_{\ell = 1}^i\delta'_\ell = 1+x_i$. Since each path $P_i$ begins at $(-1+\sum_{\ell = 1}^{i} \alpha_\ell, i-1)$ and horizontal steps decrease the $x$ coordinate then $1+x_i \le  \sum_{\ell = 1}^{i} \alpha_\ell$. To show that $\delta'$ is a bijection, it will be easier to demonstrate injectivity and surjectivity rather than explicitly construct an inverse. For injectivity, since each $P_i$ contains only a single vertical edge if $\delta'(L) = \delta'(L')$ then the $x$--coordinates of the vertical steps of each $P_i$ agree.  This implies that $L'=L$. 
	  
	  For surjectivity, suppose that $\D$ is a Dyck path with $\alpha(\D) = \alpha$ and $\delta(D) = (\delta_1, \delta_2,\dots \delta_k)$. We claim that the collection of paths $L = \{P_i | i \in [k-1] \}$ where $P_i$ is from $s_i$ to $t_i$ with a vertical step $(-1+\sum_{\ell = 1}^i \delta_\ell , i-1)\to(-1+\sum_{\ell = 1}^i \delta_\ell , i)$ is a nonintersecting family of lattice paths from $S$ to $T$ such that $\delta'(L) = \delta$. That $\delta'(L) = \delta$ is immediate from the definition of $\delta'$. What is not quite as immediate is that $L$ is nonintersecting. To see that $L$ is a nonintersecting family, since each $\delta_i > 0$, it follows that $x_i<x_{i+1}$. Thus the vertices with $y$--coordinate $i$ occupied by $P_i$ and $P_{i+1}$ are disjoint and thus the paths are nonintersecting.
	\end{proof}
	\begin{lemma}\label{lem:ascent_refinement}
	The number of Dyck paths with ascent composition $\alpha'=(\alpha'_1,\alpha'_2,\dots ,\alpha'_m)$ where $\alpha'$ refines $\alpha = (\alpha_1,\alpha_2,\dots, \alpha_k)$ is the number of Dyck paths with ascent composition $\beta = (\alpha_1+1,\alpha_2+1,\dots, \alpha_{k-1}+1,\alpha_k)$
\end{lemma}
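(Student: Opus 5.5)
I would prove this with an explicit bijection. On one side are Dyck paths whose ascent composition refines $\alpha$. On the other are Dyck paths with ascent composition exactly $\beta=(\alpha_1+1,\dots,\alpha_{k-1}+1,\alpha_k)$.

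A Dyck path $\D$ with ascent composition $\alpha'$ refining $\alpha$ is the same thing as a path $U^{\alpha_1}\,X_1\,U^{\alpha_2}\,X_2\cdots U^{\alpha_k}\,X_k$ with each $X_i\in\{D\}^*$. Here $X_i$ may be empty for $i<k$, while $X_k$ is nonempty since the path ends with a down step. Refinement means exactly that the up steps are cut into runs only at some of the positions $\alpha_1,\alpha_1+\alpha_2,\dots$, and a block boundary carries no down steps precisely when it is not a genuine cut. The total number of down steps is $n$, and the Dyck condition is $\sum_{\ell\le i}|X_\ell|\le \sum_{\ell\le i}\alpha_\ell$ for every $i$.

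The map I would use is
\[
U^{\alpha_1}X_1\cdots U^{\alpha_k}X_k\longmapsto U^{\alpha_1+1}X_1D\,U^{\alpha_2+1}X_2D\cdots U^{\alpha_{k-1}+1}X_{k-1}D\,U^{\alpha_k}X_k .
\]
In words, for each $i<k$ we insert one extra $U$ at the end of the $i$th block and one extra $D$ right after $X_i$.

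The image has ascent composition exactly $\beta$, because every $X_iD$ with $i<k$ is now a nonempty run of down steps, so every block boundary becomes a genuine cut. It has semilength $n+k-1$. It is still a Dyck path: after the $i$th inserted pair the height is the original height plus $1-1=0$, and in between the height is only raised by one. Consequently the minimal heights at the ends of the down runs coincide with the original ones.

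Conversely, a path with ascent composition $\beta$ has the form $U^{\alpha_1+1}Y_1\cdots U^{\alpha_{k-1}+1}Y_{k-1}U^{\alpha_k}Y_k$ with every $Y_i$ nonempty. Deleting the last $U$ of each of the first $k-1$ up runs and the last $D$ of each $Y_i$ for $i<k$ inverts the map. The deletion preserves nonnegativity by the same height bookkeeping: the removed pair contributes net zero, and at the lowest points (ends of the $Y_i$) the heights are unchanged.

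The main point to check carefully is this height accounting in both directions, namely that the Dyck condition only needs to be checked at the ends of down runs and that those heights agree before and after. I would also make explicit the convention on semilength: the right-hand count concerns paths of semilength $n+k-1$.
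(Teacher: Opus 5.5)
Your proposal is correct and is the paper's own argument. Since each $X_i$ consists only of down steps, $U^{\alpha_i+1}X_iD=U^{\alpha_i}(UD)X_i$, so your map is exactly the paper's insertion of a peak after the first $\sum_{\ell\le i}\alpha_\ell$ up steps, and your inverse is the paper's deletion of the first $k-1$ peaks. Your height bookkeeping and the semilength remark ($n+k-1$ on the $\beta$ side) fill in details the paper calls immediate. Your reading of ``refines'' (run breaks allowed only at partial sums of $\alpha$) matches how the lemma is used in the proof of Theorem~\ref{thm:enum_of_132_words}.
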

\begin{proof}
	This follows immediately from the fact that the map from Dyck paths with ascent composition $\beta$ to Dyck paths with ascent composition refining $\alpha$ defined by deleting the first $k-1$ peaks, that is deleting the first $k-1$ instances of $UD$ in the word description, is invertible with inverse given by inserting peaks after the $\sum_{\ell = 1}^i \alpha_\ell$ up steps for each $1\le i \le k-1$.
\end{proof}
\begin{example}
	 Below is nonintersecting path family $L$ corresponding to the path $\D$ of Example~\ref{ex:DyckPathAscentSequenceDescents} with $\alpha(\D)=(2,2,3,1,3,3,3)$ and $\delta(\D) = (1,2,1,4,2,1,6)$ under $\delta'$.
	\begin{center}
	\begin{tikzpicture}[scale = 1]
 	\nwgrid{13}{6}
	\coordinate(a) at (-1,0);
    \coordinate(b) at (0,1);
	\newcommand\w{-- ++(a)}
	\newcommand\n{-- ++(b)}
	\node[circle,draw=black, fill=white, inner sep=0pt,minimum size=5pt] (s1) at (1,0) {$s_1$};
	\node[circle,draw=black, fill=white, inner sep=0pt,minimum size=5pt] (s2) at (3,1) {$s_2$};
	\node[circle,draw=black, fill=white, inner sep=0pt,minimum size=5pt] (s3) at (6,2) {$s_3$};
	\node[circle,draw=black, fill=white, inner sep=0pt,minimum size=5pt] (s4) at (7,3) {$s_4$};
	\node[circle,draw=black, fill=white, inner sep=0pt,minimum size=5pt] (s5) at (10,4) {$s_5$};
	\node[circle,draw=black, fill=white, inner sep=0pt,minimum size=5pt] (s6) at (13,5) {$s_6$};
	\draw[-, thick, blue] (s1)\w\n;
	\draw[-, thick, blue] (s2)\w\n\w\w;
	\draw[-, thick, blue] (s3)\w\w\w\n\w\w\w;
	\draw[-, thick, blue] (s4)\n\w\w\w\w\w\w\w;
	\draw[-, thick, blue] (s5)\w\n\w\w\w\w\w\w\w\w\w;
	\draw[-, thick, blue] (s6)\w\w\w\n\w\w\w\w\w\w\w\w\w\w;
	\node[circle,draw=black, fill=white, inner sep=0pt,minimum size=5pt] (t1) at (0,1) {$t_1$};
	\node[circle,draw=black, fill=white, inner sep=0pt,minimum size=5pt] (t2) at (0,2) {$t_2$};
	\node[circle,draw=black, fill=white, inner sep=0pt,minimum size=5pt] (t3) at (0,3) {$t_3$};
	\node[circle,draw=black, fill=white, inner sep=0pt,minimum size=5pt] (t4) at (0,4) {$t_4$};
	\node[circle,draw=black, fill=white, inner sep=0pt,minimum size=5pt] (t5) at (0,5) {$t_5$};
	\node[circle,draw=black, fill=white, inner sep=0pt,minimum size=5pt] (t6) at (0,6) {$t_6$};
 \end{tikzpicture}
	\end{center}
\end{example}
We can now turn our attention towards proving Theorem~\ref{thm:enum_of_132_words}.
\begin{proof}[Proof of Theorem~\ref{thm:enum_of_132_words}]
	To begin, consider the collection of permutations of the multiset of positive integers $\{\!\{i_1^{\alpha_i},i_2^{\alpha_2},\dots,i_k^{\alpha_k} \}\!\}$ where $i_{1} < i_2 < \cdots < i_k$, each $\alpha_i > 0$, and $\sum_{\ell = 1}^k \alpha_\ell = n$. We denote this collection by $W({\alpha'})$ where the weak composition $\alpha'$ is defined as follows. For each $\ell\le i_k$, $\alpha'_\ell$ is $0$ if $\ell \neq i_j$, for some $1\le j \le k$, and $\alpha'_\ell = \alpha_j$ if $\ell = i_j$.  Each $u\in W_{\alpha'}$ can be expressed as shuffling together $\alpha_j$ copies of $i_j$ for each $1\le j \le k$, so by considering $\sigma = \std(u)$ for $u\in W_{\alpha'}$ we find that $\displaystyle \sigma\in \overset{k}{\underset{j=1}{\shuffle}}12\dots \alpha_j$. In particular we must have for such a permutation $\sigma$ that $\iDes(\sigma)\subseteq \{ \sum_{\ell=1}^i \alpha_i | i\in [k-1]\}  $ and it is immediate this condition characterizes all such standardizations. 
	
	 Recall that for $u,v \in W_{\alpha'}$ we have that $u\sylv v$ if and only if $ \std(u)\sylv \std(v)$ as permutations. Thus the number of Sylvester classes are the number of 132 avoiding permutations which are standardizations of words in $W_{\alpha'}$. Combining this with the fact that $\iDes(\std(u))\subseteq \{ \sum_{\ell=1}^i \alpha_i | i\in [k-1]\}$ and that each Sylvester class contains a unique 132 avoiding permutation, it will suffice to enumerate $\{\sigma \in \Avoid_n(132) | \iDes(\sigma)\subseteq  \{ \sum_{\ell=1}^i \alpha_i | i\in [k-1]\} $. To enumerate these permutations, since $\Avoid_n(132)$ is closed under inversion, we will enumerate $\{\sigma \in \Avoid_n(132) | \Des(\sigma)\subseteq  \{ \sum_{\ell=1}^i \alpha_i | i\in [k-1]\} \}$. Using the descent composition preserving identification of $\Avoid_n(132)$ and $\D_n$ and the fact that $\alpha(\D)^r = \delta(D^r)$, it will suffice to count the number of Dyck paths whose ascent composition $\alpha$ corresponds to a set $S$ which is contained in $\{\sum_{\ell =1}^{i}\alpha^r_i  | i\in[k]\}$, where $\alpha^r$ denotes reversal of the composition $\alpha$. Now by combining Lemma~\ref{lem:ascent_refinement} and Lemma~\ref{lem:ascent_comp_count} this quantity is equal to $\displaystyle\det\bigg[\binom{\displaystyle j-i+\sum_{\ell = 1}^i(\alpha^r_\ell+1)}{j-(i-1)}\bigg]_{1\le i,j\le k-1}=\det\bigg[\binom{\displaystyle j+\sum_{\ell = 1}^i\alpha_{k+1-\ell}}{j-(i-1)}\bigg]_{1\le i,j\le k-1}$ as claimed.
\end{proof}

\begin{example} Below we list the Sylvester classes of the standardization of words with packed content $(2,2,1)$. Additionally we also include below the families of nonintersecting lattice paths for the associated graph for this composition. We wish to stress the color coding is \textbf{\emph{not}} corresponding between Sylvester classes and families of paths.
 \center

	\begin{tabular}{c c c c c c}
		\color{red}{11223} &\color{red}{12345} &\color{blue}{11232} &\color{blue}{12354} &\color{blue}{11322} &\color{blue}{12534} \\
		\color{orange}{12123} &\color{orange}{13245} &\color{purple}{12132} &\color{purple}{13254} &\color{new_blue}{12213} &\color{new_blue}{13425} \\
		\color{new_pink}{12231} &\color{new_pink}{13452} &\color{purple}{12312} &\color{purple}{13524} &\color{new_pink}{12321} &\color{new_pink}{13542} \\
		\color{blue}{13122} &\color{blue}{15234} &\color{purple}{13212} &\color{purple}{15324} &\color{new_pink}{13221} &\color{new_pink}{15342} \\
		\color{orange}{21123} &\color{orange}{31245} &\color{purple}{21132} &\color{purple}{31254} &\color{new_blue}{21213} &\color{new_blue}{31425} \\
		\color{new_green}{21231} &\color{new_green}{31452} &\color{purple}{21312} &\color{purple}{31524} &\color{new_pink}21321 &\color{new_pink}31542 \\
		\color{new_blue}{22113} &\color{new_blue}{34125} &\color{new_green}{22131} &\color{new_green}{34152} &\color{new_green}{22311} &\color{new_green}{34512} \\
		\color{purple}{23112} &\color{purple}{35124} &\color{new_pink}23121 &\color{new_pink}35142 &\color{new_pink}23211 &\color{new_pink}{35412} \\
		\color{blue}{31122} &\color{blue}{51234} &\color{purple}{31212} &\color{purple}{51324} &\color{new_pink}{31221} &\color{new_pink}{51342} \\
		\color{purple}{32112} &\color{purple}{53124} &\color{new_pink}{32121} &\color{new_pink}{53142} &\color{new_pink}{32211} &\color{new_pink}{53412} \\
	\end{tabular}
	
	\begin{tabular}{c c c }
			\begin{tikzpicture}[scale = 1]
		\nwgrid{4}{2}
		\coordinate(a) at (-1,0);
    	\coordinate(b) at (0,1);
    	\renewcommand\start{circle(.05)}
		\newcommand\w{-- ++(a)}
		\newcommand\n{-- ++(b)}
		\node[circle,draw=black, fill=white, inner sep=0pt,minimum size=5pt] (s1) at (1,0) {$s_1$};
		\node[circle,draw=black, fill=white, inner sep=0pt,minimum size=5pt] (s2) at (4,1) {$s_2$};
		\draw[-, red, ultra thick] (s1)\w\n;
		\draw[-, red, ultra thick] (s2)\w\w\w\n\w;
		\node[circle,draw=black, fill=white, inner sep=0pt,minimum size=5pt] (t1) at (0,1) {$t_1$};
		\node[circle,draw=black, fill=white, inner sep=0pt,minimum size=5pt] (t2) at (0,2) {$t_2$};
	\end{tikzpicture} & 	\begin{tikzpicture}[scale = 1]
		\nwgrid{4}{2}
		\coordinate(a) at (-1,0);
    	\coordinate(b) at (0,1);
    	\renewcommand\start{circle(.05)}
		\newcommand\w{-- ++(a)}
		\newcommand\n{-- ++(b)}
		\node[circle,draw=black, fill=white, inner sep=0pt,minimum size=5pt] (s1) at (1,0) {$s_1$};
		\node[circle,draw=black, fill=white, inner sep=0pt,minimum size=5pt] (s2) at (4,1) {$s_2$};
		\draw[-, blue, ultra thick] (s1)\w\n;
		\draw[-, blue, ultra thick] (s2)\w\w\n\w\w;
		\node[circle,draw=black, fill=white, inner sep=0pt,minimum size=5pt] (t1) at (0,1) {$t_1$};
		\node[circle,draw=black, fill=white, inner sep=0pt,minimum size=5pt] (t2) at (0,2) {$t_2$};
	\end{tikzpicture} & \\
		\begin{tikzpicture}[scale = 1]
		\nwgrid{4}{2}
		\coordinate(a) at (-1,0);
    	\coordinate(b) at (0,1);
    	\renewcommand\start{circle(.05)}
		\newcommand\w{-- ++(a)}
		\newcommand\n{-- ++(b)}
		\node[circle,draw=black, fill=white, inner sep=0pt,minimum size=5pt] (s1) at (1,0) {$s_1$};
		\node[circle,draw=black, fill=white, inner sep=0pt,minimum size=5pt] (s2) at (4,1) {$s_2$};
		\draw[-, new_blue, ultra thick] (s1)\w\n;
		\draw[-, new_blue, ultra thick] (s2)\w\n\w\w\w;
		\node[circle,draw=black, fill=white, inner sep=0pt,minimum size=5pt] (t1) at (0,1) {$t_1$};
		\node[circle,draw=black, fill=white, inner sep=0pt,minimum size=5pt] (t2) at (0,2) {$t_2$};
	\end{tikzpicture} &
	\begin{tikzpicture}[scale = 1]
		\nwgrid{4}{2}
		\coordinate(a) at (-1,0);
    	\coordinate(b) at (0,1);
    	\renewcommand\start{circle(.05)}
		\newcommand\w{-- ++(a)}
		\newcommand\n{-- ++(b)}
		\node[circle,draw=black, fill=white, inner sep=0pt,minimum size=5pt] (s1) at (1,0) {$s_1$};
		\node[circle,draw=black, fill=white, inner sep=0pt,minimum size=5pt] (s2) at (4,1) {$s_2$};
		\draw[-, orange, ultra thick] (s1)\w\n;
		\draw[-, orange, ultra thick] (s2)\n\w\w\w\w;
		\node[circle,draw=black, fill=white, inner sep=0pt,minimum size=5pt] (t1) at (0,1) {$t_1$};
		\node[circle,draw=black, fill=white, inner sep=0pt,minimum size=5pt] (t2) at (0,2) {$t_2$};
	\end{tikzpicture} &
	\begin{tikzpicture}[scale = 1]
		\nwgrid{4}{2}
		\coordinate(a) at (-1,0);
    	\coordinate(b) at (0,1);
    	\renewcommand\start{circle(.05)}
		\newcommand\w{-- ++(a)}
		\newcommand\n{-- ++(b)}
		\node[circle,draw=black, fill=white, inner sep=0pt,minimum size=5pt] (s1) at (1,0) {$s_1$};
		\node[circle,draw=black, fill=white, inner sep=0pt,minimum size=5pt] (s2) at (4,1) {$s_2$};
		\draw[-, purple, ultra thick] (s1)\n\w;
		\draw[-, purple, ultra thick] (s2)\w\w\n\w\w;
		\node[circle,draw=black, fill=white, inner sep=0pt,minimum size=5pt] (t1) at (0,1) {$t_1$};
		\node[circle,draw=black, fill=white, inner sep=0pt,minimum size=5pt] (t2) at (0,2) {$t_2$};
	\end{tikzpicture} \\
	\begin{tikzpicture}[scale = 1]
		\nwgrid{4}{2}
		\coordinate(a) at (-1,0);
    	\coordinate(b) at (0,1);
    	\renewcommand\start{circle(.05)}
		\newcommand\w{-- ++(a)}
		\newcommand\n{-- ++(b)}
		\node[circle,draw=black, fill=white, inner sep=0pt,minimum size=5pt] (s1) at (1,0) {$s_1$};
		\node[circle,draw=black, fill=white, inner sep=0pt,minimum size=5pt] (s2) at (4,1) {$s_2$};
		\draw[-, new_green, ultra thick] (s1)\n\w;
		\draw[-, new_green, ultra thick] (s2)\w\n\w\w\w;
		\node[circle,draw=black, fill=white, inner sep=0pt,minimum size=5pt] (t1) at (0,1) {$t_1$};
		\node[circle,draw=black, fill=white, inner sep=0pt,minimum size=5pt] (t2) at (0,2) {$t_2$};
	\end{tikzpicture} &
	\begin{tikzpicture}[scale = 1]
		\nwgrid{4}{2}
		\coordinate(a) at (-1,0);
    	\coordinate(b) at (0,1);
    	\renewcommand\start{circle(.05)}
		\newcommand\w{-- ++(a)}
		\newcommand\n{-- ++(b)}
		\node[circle,draw=black, fill=white, inner sep=0pt,minimum size=5pt] (s1) at (1,0) {$s_1$};
		\node[circle,draw=black, fill=white, inner sep=0pt,minimum size=5pt] (s2) at (4,1) {$s_2$};
		\draw[-, new_pink, ultra thick] (s1)\n\w;
		\draw[-, new_pink, ultra thick] (s2)\n\w\w\w\w;
		\node[circle,draw=black, fill=white, inner sep=0pt,minimum size=5pt] (t1) at (0,1) {$t_1$};
		\node[circle,draw=black, fill=white, inner sep=0pt,minimum size=5pt] (t2) at (0,2) {$t_2$};
	\end{tikzpicture} &
	\end{tabular}
\end{example}

With the enumeration of the number of Sylvester classes of words with a fixed packed content and the fact for words $u,v$ we have that $u\sylv v$ if and only if $rc(u)\ssylv rc(v)$, then as an immediate consequence we can derive an enumeration for the number of $\ssylv$ classes for a word with packed content $\alpha$.

\begin{corollary}\label{cor:sharpsylvenum}
	The number of equivalence classes of $\ssylv$ for words with packed content $\alpha = (\alpha_1,\alpha_2,\dots, \alpha_k)$ equals $\displaystyle\det\bigg[\binom{\displaystyle j+\sum_{\ell = 1}^i\alpha_{\ell}}{j-(i-1)}\bigg]_{1\le i,j\le k-1}$
\end{corollary}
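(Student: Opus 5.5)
We deduce the corollary from Theorem~\ref{thm:enum_of_132_words} through the stated fact that $u\sylv v$ if and only if $rc(u)\ssylv rc(v)$. Here $rc$ denotes reverse-complement: reverse the word, then replace each letter $x$ by $m+1-x$, with $m$ the maximal letter (or, for packed words, $k+1-x$).

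First we check that $rc$ is an involution on words and sends a word of packed content $\alpha=(\alpha_1,\dots,\alpha_k)$ to a word of packed content $\alpha^r=(\alpha_k,\dots,\alpha_1)$. Complementation exchanges letter $j$ with letter $k+1-j$, and reversal does not change content.

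Next we verify the equivalence itself at the level of adjacencies, in case it is only cited. A Sylvester adjacency $u\,ac\,v\,b\,w \leftrightarrow u\,ca\,v\,b\,w$ with $a\le b<c$ becomes, after reverse-complement,
\[ \bar w\,\bar b\,\bar v\,\bar c\,\bar a\,\bar u \leftrightarrow \bar w\,\bar b\,\bar v\,\bar a\,\bar c\,\bar u, \]
where now $\bar c<\bar b\le \bar a$. This is exactly a \#--Sylvester adjacency, with the roles $a'=\bar c$, $b'=\bar b$, $c'=\bar a$. The converse holds by the same computation, since $rc$ is an involution. It follows that $rc$ induces a bijection between $\sylv$-classes of words with packed content $\alpha^r$ and $\ssylv$-classes of words with packed content $\alpha$.

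Finally we apply Theorem~\ref{thm:enum_of_132_words} to the composition $\alpha^r$. Since $(\alpha^r)_{k+1-\ell}=\alpha_\ell$, the entry $\binom{j+\sum_{\ell=1}^i \alpha^r_{k+1-\ell}}{j-(i-1)}$ becomes $\binom{j+\sum_{\ell=1}^i\alpha_\ell}{j-(i-1)}$, which gives the claimed determinant.

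The only delicate point is the bookkeeping of the inequalities ($a\le b<c$ versus $a<b\le c$) under complementation, together with confirming that the reversal of content matches the index reversal in the theorem. Once that is checked, everything else is substitution.
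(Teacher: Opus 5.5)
Your proposal is correct and follows the paper's argument: the paper also uses the reverse-complement fact $u\sylv v \iff rc(u)\ssylv rc(v)$ to identify $\ssylv$-classes of packed content $\alpha$ with $\sylv$-classes of packed content $\alpha^r$, then substitutes $\alpha^r$ into Theorem~\ref{thm:enum_of_132_words}. Your adjacency-level check, that $rc$ turns a Sylvester move with $a\le b<c$ into a \#--Sylvester move with $\bar c<\bar b\le \bar a$, fills in a step the paper only asserts.
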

\begin{proof}
	The number of $\ssylv$ classes of words with packed content $\alpha$ is the number of $\sylv$ classes of words with packed content $\alpha^r$ so the enumeration follows immediately from Theorem~\ref{thm:enum_of_132_words}.
\end{proof}
Corollary~\ref{cor:sharpsylvenum} is the last ingredient needed to prove Theorem~\ref{thm:132and213enum}.
\begin{proof}[Proof of Theorem~\ref{thm:132and213enum}]

We first note that Proposition~\ref{prop:wilfequivs} proves the equalities. 
	Because the argument for 132 and 213 avoiding parking functions is identical, aside from where we use the Sylvester congruence versus the \#--Sylvester congruence, we present only the details for the 132 avoiders. We first note that since for words $u,v$ we have that $u\sylv v$ if and only if the content of $u$ and $v$ are equal and $\std(u)\sylv \std(v)$. This will allow us to decompose our count into summing across the number of Sylvester classes for words of a fixed content. Since each Sylvester class contains a unique element which standardizes to avoid 132 it will suffice to count these elements.
	Further because $\PF_n$ is permutation invariant, then the 132--avoiding parking functions correspond to the Sylvester classes for the words of each valid content. Thus to enumerate the number of Sylvester classes of parking functions, we need only count how many contents there are with a fixed composition for a packed content. Because for each of these compositions the number of Sylvester classes is the same, we need only multiply the number of contents by the number of Sylvester classes for that content. When considering each parking function as a labeled Dyck path, the content of a parking function $p$ is the ascent composition of the associated Dyck path. By combining Lemma~\ref{lem:ascent_comp_count} with Theorem~\ref{thm:enum_of_132_words} we have that the number of parking functions which avoid 132 is enumerated by the quantity \[\sum_{\alpha \vDash n}\det\bigg[\binom{\displaystyle j-i+\sum_{\ell = 1}^i\alpha_{\ell}}{j-(i-1)}\bigg]_{1\le i,j\le k-1}\det\bigg[\binom{\displaystyle j+\sum_{\ell = 1}^i\alpha_{k+1-\ell}}{j-(i-1)}\bigg]_{1\le i,j\le k-1}\] as claimed. 
	\end{proof}
	\section*{Acknowledgments}

This material is based upon work supported by the National Science Foundation under Grant No. DMS-1929284 while the author was in residence at the Institute for Computational and Experimental Research in Mathematics in Providence, RI.
The author thanks ICERM for the opportunity to continue this work in the Collaborate@ICERM program. Additionally the author thanks the developers of OEIS \cite{OEIS} and SageMath \cite{sage}, which were useful in this research, and the CoCalc collaboration platform \cite{SMC}.

The author is also indebted to N\'estor F. D\'iaz Morera, Jennifer Elder, Pamela E. Harris, Molly Lynch, and J. Carlos Mart\'inez Mori who this work grew out of a collaboration with. The author also thanks Nicholas Mayers for helpful feedback on an earlier draft of this paper.
\bibliographystyle{plain}
\bibliography{bib}

@article{bevan2015permutation,
  title={Permutation patterns: basic definitions and notation},
  author={Bevan, David},
  journal={arXiv preprint arXiv:1506.06673},
  year={2015}
}

@article{novelli2020hopf,
  title={Hopf algebras of m-permutations,(m+ 1)-ary trees, and m-parking functions},
  author={Novelli, Jean-Christophe and Thibon, Jean-Yves},
  journal={Advances in Applied Mathematics},
  volume={117},
  pages={102019},
  year={2020},
  publisher={Elsevier}
}

@incollection{novelli2020duplicial,
  title={Duplicial algebras, parking functions, and Lagrange inversion},
  author={Novelli, Jean-Christophe and Thibon, Jean-Yves},
  booktitle={Algebraic combinatorics, resurgence, moulds and applications (CARMA)},
  pages={263--290},
  year={2020},
  publisher={European Mathematical Society-EMS-Publishing House GmbH}
}

@article{novelli2008noncommutative,
  title={Noncommutative symmetric functions and Lagrange inversion},
  author={Novelli, Jean-Christophe and Thibon, Jean-Yves},
  journal={Advances in Applied Mathematics},
  volume={40},
  number={1},
  pages={8--35},
  year={2008},
  publisher={Elsevier}
}

@article{frame1954hook, 
title={The Hook Graphs of the Symmetric Group}, 
volume={6},
DOI={10.4153/CJM-1954-030-1}, 
journal={Canadian Journal of Mathematics}, 
author={Frame, J. S. and Robinson, G. de B. and Thrall, R. M.}, 
year={1954}, pages={316–324}
}

@article{malvenuto1995duality,
title = {Duality between Quasi-Symmetrical Functions and the Solomon Descent Algebra},
journal = {Journal of Algebra},
volume = {177},
number = {3},
pages = {967-982},
year = {1995},
issn = {0021-8693},
doi = {https://doi.org/10.1006/jabr.1995.1336},
url = {https://www.sciencedirect.com/science/article/pii/S0021869385713361},
author = {C. Malvenuto and C. Reutenauer},
}

@article {baril2025ascent,
    AUTHOR = {Baril, Jean-Luc and Bousquet-M\'elou, Mireille and Kirgizov,
              Sergey and Naima, Mehdi},
     TITLE = {The ascent lattice on {D}yck paths},
   JOURNAL = {Electron. J. Combin.},
  FJOURNAL = {Electronic Journal of Combinatorics},
    VOLUME = {32},
      YEAR = {2025},
    NUMBER = {2},
     PAGES = {Paper No. 2.36, 42},
      ISSN = {1077-8926},
   MRCLASS = {05A19 (05A15 06A07 06A11)},
  MRNUMBER = {4908661},
MRREVIEWER = {Joel\ Brewster\ Lewis},
       DOI = {10.37236/13436},
       URL = {https://doi.org/10.37236/13436},
}

@article{qiu2019patterns,
  title={Patterns in words of ordered set partitions},
  author={Qiu, Dun and Remmel, Jeffrey},
  journal={Journal of Combinatorics},
  volume={10},
  number={3},
  pages={433--490},
  year={2019},
  publisher={International Press of Boston}
}

@article{adeniran2023pattern,
    AUTHOR = {Adeniran, Ayomikun and Pudwell, Lara},
     TITLE = {Pattern avoidance in parking functions},
   JOURNAL = {Enumer. Comb. Appl.},
  FJOURNAL = {Enumerative Combinatorics and Applications},
    VOLUME = {3},
      YEAR = {2023},
    NUMBER = {3},
     PAGES = {Paper No. S2R17, 21},
      ISSN = {2710-2335},
   MRCLASS = {05A15 (05A05 05A19)},
  MRNUMBER = {4607273},
MRREVIEWER = {Justin\ M.\ Troyka},
       DOI = {10.54550/eca2023v3s3r17},
       URL = {https://doi.org/10.54550/eca2023v3s3r17},
}

@article{konheim1966occupancy,
author = {Alan G. Konheim and Benjamin Weiss},
title = {An Occupancy Discipline and Applications},
journal = {SIAM Journal on Applied Mathematics},
volume = {14},
number = {6},
pages = {1266-1274},
year = {1966},
}

@article{garsia1996natural,
author={Garsia, A. M. and Haiman, M.}, 
title={Some Natural Bigraded $S_n$-Modules},
volume={3}, 
url={https://www.combinatorics.org/ojs/index.php/eljc/article/view/v3i2r24}, 
DOI={10.37236/1282}, 
number={2},
journal={The Electronic Journal of Combinatorics}, 
year={1996},
month={Jan.},
pages={\#R24} 
}

@article{harris2026pollak,
    author     = {Pamela E. Harris and J. Carlos Martínez Mori and Alexander N. Wilson},
    title      = {A Pollak Proof for the Number of Weakly Increasing Parking Functions},
    url        = {https://dmtcs.episciences.org/17006},
    doi        = {10.46298/dmtcs.17006},
    journal    = {Discrete Mathematics \& Theoretical Computer Science},
    issn       = {1365-8050},
    volume     = {vol. 28:1, Permutation Patterns 2025},
    issuetitle = {Special issues},
    eid        = 2,
    year       = {2026},
    month      = {Apr},
}

@article{greene1976structure,
title = {The structure of sperner k-families},
journal = {Journal of Combinatorial Theory, Series A},
volume = {20},
number = {1},
pages = {41-68},
year = {1976},
issn = {0097-3165},
doi = {https://doi.org/10.1016/0097-3165(76)90077-7},
url = {https://www.sciencedirect.com/science/article/pii/0097316576900777},
author = {Curtis Greene and Daniel J Kleitman},
}

@article{hivert2005algebra,
title = {The algebra of binary search trees},
journal = {Theoretical Computer Science},
volume = {339},
number = {1},
pages = {129-165},
year = {2005},
note = {Combinatorics on Words},
issn = {0304-3975},
doi = {https://doi.org/10.1016/j.tcs.2005.01.012},
url = {https://www.sciencedirect.com/science/article/pii/S0304397505000289},
author = {F. Hivert and J.-C. Novelli and J.-Y. Thibon}
}

@article {haiman1994conjectures,
    AUTHOR = {Haiman, Mark D.},
     TITLE = {Conjectures on the quotient ring by diagonal invariants},
   JOURNAL = {J. Algebraic Combin.},
  FJOURNAL = {Journal of Algebraic Combinatorics. An International Journal},
    VOLUME = {3},
      YEAR = {1994},
    NUMBER = {1},
     PAGES = {17--76},
      ISSN = {0925-9899,1572-9192},
   MRCLASS = {20C30 (05E05)},
  MRNUMBER = {1256101},
MRREVIEWER = {A.\ O.\ Morris},
       DOI = {10.1023/A:1022450120589},
       URL = {https://doi.org/10.1023/A:1022450120589},
}

@article {jelinek2009wilf,
    AUTHOR = {Jel\'inek, V\'it and Mansour, Toufik},
     TITLE = {Wilf-equivalence on {$k$}-ary words, compositions, and parking
              functions},
   JOURNAL = {Electron. J. Combin.},
  FJOURNAL = {Electronic Journal of Combinatorics},
    VOLUME = {16},
      YEAR = {2009},
    NUMBER = {1},
     PAGES = {Research Paper 58, 9},
      ISSN = {1077-8926},
   MRCLASS = {05A18 (05A05)},
  MRNUMBER = {2505100},
MRREVIEWER = {Anna\ E.\ Frid},
       DOI = {10.37236/147},
       URL = {https://doi.org/10.37236/147},
}

@article{regev1981asymptotic,
title = {Asymptotic values for degrees associated with strips of young diagrams},
journal = {Advances in Mathematics},
volume = {41},
number = {2},
pages = {115-136},
year = {1981},
issn = {0001-8708},
doi = {https://doi.org/10.1016/0001-8708(81)90012-8},
url = {https://www.sciencedirect.com/science/article/pii/0001870881900128},
author = {Amitai Regev}
}

@article {yan2025results,
    AUTHOR = {Yan, Jun},
     TITLE = {Results on pattern avoidance in parking functions},
   JOURNAL = {Enumer. Comb. Appl.},
  FJOURNAL = {Enumerative Combinatorics and Applications},
    VOLUME = {5},
      YEAR = {2025},
    NUMBER = {1},
     PAGES = {Paper No. S2R2, 29},
      ISSN = {2710-2335},
   MRCLASS = {05A15 (05A19)},
  MRNUMBER = {4798609},
       DOI = {10.54550/eca2025v5s1r2},
       URL = {https://doi.org/10.54550/eca2025v5s1r2},
}

@article{garver2019greene,
  title={Greene--Kleitman Invariants for Sulzgruber Insertion},
  author={Garver, Alexander and Patrias, Rebecca},
  journal={The Electronic Journal of Combinatorics},
  pages={P3--25},
  year={2019}
}

@article{branden2005finite,
  title={Finite automata and pattern avoidance in words},
  author={Br{\"a}nd{\'e}n, Petter and Mansour, Toufik},
  journal={Journal of Combinatorial Theory, Series A},
  volume={110},
  number={1},
  pages={127--145},
  year={2005},
  publisher={Elsevier}
}

@misc{OEIS,
    Author = {{OEIS Foundation Inc.}},
    Note = {Published electronically at \url{http://oeis.org}},
    Title = {The {O}n-{L}ine {E}ncyclopedia of {I}nteger {S}equences},
    Year = 2023
}

@manual{sage,
 author = {William\thinspace{}A. Stein and others},
 key    = {Sage Math},
 note   = {\url{http://www.sagemath.org}},
 organization  = {The Sage Development Team},
 title  = {{S}age {M}athematics {S}oftware ({V}ersion 9.4)},
 year   = {2022},
}

@manual{SMC,
  Key          = {CoCalc},
  Author       = {{SageMath Inc.}},
  Title        = {CoCalc Collaborative Computation Online},
  note         = {{\tt https://cocalc.com/}},
  Year         = {2022},
}

@book{fulton1997young,
  title={Young tableaux: with applications to representation theory and geometry},
  author={Fulton, William},
  number={35},
  year={1997},
  publisher={Cambridge University Press}
}

@book{stanley1999enumerative,
  title={Enumerative Combinatorics: Volume 2},
  author={Stanley, Richard},
  year={1999},
  publisher={Cambridge University Press}
}

\end{document}